\theoremstyle{plain}
\numberwithin{equation}{section}
\newcommand{\id}{\operatorname{id}}
\newcommand{\sch}[1]{\operatorname{{\bf #1}}}
\newcommand{\ind}{\operatorname{ind}}
\newcommand{\res}{\operatorname{res}}
\newtheorem{theorem}{Theorem}[section]
\newtheorem{corollary}[theorem]{Corollary}
\newtheorem{lemma}[theorem]{Lemma}
\newtheorem{remark}[theorem]{Remark}
\newtheorem{proposition}[theorem]{Proposition}
 \newtheorem{claim}{Claim}
\title[Tate cohomology of Whittaker lattices]{Tate cohomology of
Whittaker lattices and Base change of generic representations of
${\rm GL}_n$} 
\author{Santosh Nadimpalli and Sabyasachi Dhar}
\begin{document}
\begin{abstract}
  Let $p$ and $l$ be two distinct odd primes, and let $n\geq 2$ be a
  positive integer. Let $E$ be a finite Galois extension of degree $l$
  of a $p$-adic field $F$. Let $q$ be the cardinality of the residue
  field of $F$.  Let $\pi_F$ be an integral $l$-adic generic
  representation of ${\rm GL}_n(F)$, and let $\pi_E$ be the base
  change of $\pi_F$. Let $J_l(\pi_F)$ (resp. $J_l(\pi_E)$) be the
  unique generic component of the mod-$l$ reduction $r_l(\pi_F)$
  (resp. $r_l(\pi_E)$). Assuming that $l$ does not divide
  $|{\rm GL}_{n-1}(\mathbb{F}_q)|$, we prove that the Frobenius twist
  of $J_l(\pi_F)$ is the unique generic subquotient of the Tate
  cohomology group
  $\widehat{H}^0({\rm Gal}(E/F), J_l(\pi_E))$--considered as a
  representation of ${\rm GL}_n(F)$.
\end{abstract}
\maketitle
\section{Introduction}\label{section_1}
Let $l$ be a prime number, and let $F$ be a number field. Let
$\sch{G}$ be a reductive algebraic group defined over $F$, and let
$\sigma$ be an automorphism of order $l$ of $\sch{G}$. D.Treumann and
A.Venkatesh have constructed a functorial lift of a mod-$l$
automorphic form for $\sch{G}^\sigma$ to a mod-$l$ automorphic form
for $\sch{G}$ (see \cite{MR3432583}). They conjectured that the
mod-$l$ local functoriality at ramified places must be realised in
Tate cohomology, and they defined the notion of linkage (see
\cite[Section 6.3]{MR3432583} for more details). Among many
applications of this set up, we focus on local base change lifting
from $\sch{G}^\sigma=\sch{{\rm GL}}_n/F$ to
$\sch{G}={\rm Res}_{E/F}\sch{\rm GL}_n/E$, where $E/F$ is a Galois
extension of $p$-adic fields with $[E:F]=l$. Truemann and Venkatesh's
conjecture on linkage in Tate cohomology is verified for local base
change of depth-zero cuspidal representations by N.Ronchetti, and a
precise conjecture in the context of local base change of $l$-adic
higher depth cuspidal representations was formulated in
\cite[Conjecture 2]{MR3551160}. In this article, using Whittaker
models and Rankin-Selberg zeta functions, we prove this conjecture for
${\rm GL}_n$ under the assumption that $l$ does not divide the
pro-order of ${\rm GL}_{n-1}(F)$ whenever $n>2$. In fact, when the
prime $l$ does not divide the pro-order of ${\rm GL}_{n-1}(F)$, we
prove a much stronger theorem that the Frobenius twist of a mod-$l$
generic representation of ${\rm GL}_{n}(F)$ occurs as a sub-quotient
of the zeroth Tate cohomology of its base change lifting to
${\rm GL}_n(E)$ (see Theorem \ref{noncuspidal_n_thm} for the precise
result).
	
Let us introduce some notations to state the results of this
article. From now, we assume that $F$ is a finite extension of
$\mathbb{Q}_p$ with residue field $\mathbb{F}_q$. Let $E$ be a finite
Galois extension of $F$ with $[E: F]=l$, where $l$ and $p$ are
distinct odd primes. Let $\pi_F$ be an integral $l$-adic generic
representation of ${\rm GL}_n(F)$. The mod-$l$-reduction of $\pi_F$
has a unique generic component and it is denoted by $J_l(\pi_F)$ (see
\cite[Section 1.8.4]{MR1821157}). The base change lift of $\pi_F$ to
${\rm GL}_n(E)$ is denoted by $\pi_E$ (for the definition, see
subsection (\ref{base_change})). Note that $\pi_E$ is also an integral
$l$-adic generic representation of ${\rm GL}_n(E)$. Moreover, there is
an isomorphism $T:\pi_E\xrightarrow{\sim} \pi_E^\gamma$, where
$\pi_E^\gamma$ is the twist of $\pi_E$ by a generator $\gamma$ of
${\rm Gal}(E/F)$. Then the unique generic component $J_l(\pi_E)$ of
the mod-$l$ reduction $r_l(\pi_E)$ is also stable under the action of
${\rm Gal}(E/F)$--induced by $T$. In this article, Tate cohomology
groups are always with respect to the action of ${\rm Gal}(E/F)$. We
prove the following theorem:
\begin{theorem}\label{intro_n_thm}
Let $F$ be a finite extension of $\mathbb{Q}_p$, and let $E$ be a
finite Galois extension of $F$ with $[E:F] =l$, where $p$ and $l$
are distinct odd primes such that that $l$ does not divide
$|{\rm GL}_{n-1}(\mathbb{F}_q)|$. Let $\pi_F$ be an integral
$l$-adic generic representation of ${\rm GL}_n(F)$, and let $\pi_E$
be the base change lifting of $\pi_F$ to ${\rm GL}_n(E)$. Then, the
Frobenius twist of $J_l(\pi_F)$ occurs a subquotient of the zeroth
Tate cohomology group $\widehat{H}^0 (J_l(\pi_E))$, considered as a
representation of ${\rm GL}_n(F)$.
\end{theorem}
We note some immediate remarks on the hypothesis in Theorem
\ref{intro_n_thm}. As a consequence of Proposition
\ref{tate_is_generic} in Section \ref{Tate_main_result}, the Frobenius
twist of $J_l(\pi_F)$, defined as
$J_l(\pi_F)\otimes_{{\rm Frob}} \overline{\mathbb{F}}_l$, where
${\rm Frob}$ is the Frobenius automorphism of
$\overline{\mathbb{F}}_l$, is in fact the unique generic sub-quotient
of the Tate cohomology group $\widehat{H}^0(J_l(\pi_E))$. We use
Kirillov and Whittaker models of generic representations to prove our
main result.  The hypothesis that $l$ does not divide the pro-order of
${\rm GL}_{n-1}(F)$ is required in the proof of a vanishing result of
Rankin--Selberg integrals on ${\rm GL}_{n-1}(F)$ (the analogue of
\cite[Lemma 3.5]{MR620708} or \cite[6.2.1]{MR1981032}). This condition
on $l$ may be removed using $\gamma$-factors defined over local
Artinian $\overline{\mathbb{F}}_l$-algebras as defined in the work of
G.Moss and N.Matringe in \cite{matringe2022kirillov}. However, the
right notion of base change over local Artinian
$\overline{\mathbb{F}}_l$-algebras is not clear to the authors and
hence, we use the mild hypothesis that $l$ does not divide
$|{\rm GL}_{n-1}(\mathbb{F}_q)|$. If $\pi_F$ and $\pi_E$ are both
cuspidal, then using the Kirillov model for cuspidal representations,
one observes that the Tate cohomology group
$\widehat{H}^0(r_l(\pi_E))$ is an irreducible ${\rm GL}_n(F)$
representation, and the above theorem says that this Tate cohomology
space is isomorphic to the Frobenius twist of mod-$l$ reduction of
$\pi_F$ (Corollary \ref{n_thm}) when $l$ does not divide the pro-order
of ${\rm GL}_{n-1}(F)$. This is conjectured by Ronchetti in
\cite[Conjecture 2]{MR3551160}.
	
Let $\mathcal{K}$ be the maximal unramified extension of
$\mathbb{Q}_l$ in an algebraic closure $\overline{\mathbb{Q}}_l$ of
$\mathbb{Q}_l$.  Let $\Lambda$ be the ring of integers of
$\mathcal{K}$. We also prove an integral version of Theorem
\ref{intro_n_thm}.  To be precise, say $\pi_E$ is an integral generic
$\mathcal{K}$-representation of ${\rm GL}_n(E)$--which is absolutely
irreducible (i.e.,
$\pi_E\otimes_{\mathcal{K}} \overline{\mathbb{Q}}_l$ is irreducible),
such that $\pi_E\otimes_{\mathcal{K}} \overline{\mathbb{Q}}_l$ is the
base change lift of an $l$-adic integral generic representation
$\pi_F$ of ${\rm GL}_n(F)$. We show that the Frobenius twist of
$J_l(\pi_F)$ occurs as the unique generic subquotient of the zeroth
Tate cohomology group
$\widehat{H}^0(\mathbb{W}_\Lambda(\pi_E,\psi_E))$ (Corollary
\ref{noncuspidal_n_Whittaker_lattice_thm}), where
$\mathbb{W}_\Lambda(\pi_E,\psi_E)$ is the space of all
$\Lambda$-valued functions in the Whittaker model of $\pi_E$, with
respect to a ${\rm Gal}(E/F)$-equivariant character $\psi_E$. A
priori, Vign\'eras showed that $\mathbb{W}_\Lambda(\pi_E,\psi_E)$ is a
${\rm GL}_n(E)$-invariant $\Lambda$-lattice in
$\mathbb{W}(\pi_E, \psi_E)$.
	
When $l$ does not divide the pro-order of ${\rm GL}_n(F)$, we obtain a
much precise version of Theorem \ref{intro_n_thm}. We can show that the first
Tate cohomology group of any ${\rm Gal}(E/F)$ invariant lattice
$\mathcal{L}$ in a generic representation $\pi_E$ as in Theorem
\ref{intro_n_thm} is trivial. Moreover, we show that the zeroth Tate
cohomology group of the mod-$l$ reduction $r_l(\pi_E)$ is an
irreducible representation of ${\rm GL}_n(F)$ (see Theorem
\ref{Tate_L(D)} and Corollary \ref{Tate_0_generic}). Our method can
also be extended to some non-generic representations as
well. Especially for those irreducible representations of
${\rm GL}_n(E)$ which remains irreducible when restricted to the
mirabolic subgroup, denoted by $P_n(E)$. This class of representations
are exactly the Zelevinsky sub-representations. Assume that $\sigma_E$
is an $l$-adic cuspidal representation obtained as a base change
lifting of $\sigma_F$ to ${\rm GL}_n(E)$.  Let $\Delta$ be a segment
(see Section \ref{segment}) on the cuspidal line of $\pi_E$ (defined
in Theorem \ref{intro_n_thm}). We apply Theorem \ref{intro_n_thm} to
compute the Tate cohomology of mod-$l$ Zelevinsky sub-representations
$Z(\overline{\Delta})$ (see Theorem \ref{Tate_Z(D)}), where
$\overline{\Delta}$ is the segment on the cuspidal line of
$r_l(\sigma_E)$.
	
When $F$ is a local function field, the above theorem follows
from the work of T.Feng \cite{feng2024smith}. T.Feng
uses the constructions of V. Lafforgue and A. Genestier-V.Lafforgue
\cite{genestier2017chtoucas}. Assuming that $l$ and $p$ do
not divide $n$, Ronchetti proved the above result for
depth-zero cuspidal representations using the compact
induction model. Our methods are very different from the work
of N.Ronchetti and the work of T.Feng. We rely on
Rankin--Selberg integrals and Whittaker models. We
do not require the explicit construction of cuspidal
representations. We use various properties of local $\epsilon$
and $\gamma$-factors both in $l$-adic and mod-$l$ situations
associated with the representations of the $p$-adic group and
the Weil group. The machinery of local $\epsilon$ and
$\gamma$-factors of both $l$-adic and mod-$l$ representations
of ${\rm GL}_n(F)$ is made available by the seminal works of
D.Helm, G.Moss, N.Matringe and R.Kurinczuk (see
\cite{MR3867634}, \cite{MR3556429}, \cite{MR4311563},
\cite{MR3595906}).
	
The case where $\pi_E$ is a cuspidal representation of ${\rm GL}_2(E)$
is considered in Theorem \ref{n=2_thm}, the general case is proved in
Theorem \ref{noncuspidal_n_thm} using induction on $n$. The reader
might quickly follow the proof of Theorem \ref{n=2_thm} before going
to the general case.  We sketch the proof of Theorem
\ref{intro_n_thm}. The theorem is proved, inductively on $n$, using
the Kirillov model. Let
$\psi_F:F \rightarrow \overline{\mathbb{Q}}_l^\times$ be a non-trivial
additive character and let $\psi_E$ be the character
$\psi_F\circ{\rm Tr}_{E/F}$, where ${\rm Tr}_{E/F}:E\rightarrow F$ is
the trace function. Let $(\pi_F, V)$ be an integral generic $l$-adic
representation of ${\rm GL}_n(F)$. In particular, $V$ is a
$\overline{\mathbb{Q}}_l$-vector space. Let $N_n(F)$ be the group of
unipotent upper triangular matrices in ${\rm GL}_n(F)$. Let
$\Theta_F : N_n(F)\rightarrow \overline{\mathbb{Q}}_l^\times$ be a
non-degenerate character corresponding to $\psi_F$. We denote by
$\mathbb{W}(J_l(\pi_F),\overline{\psi}_F)$ the Whittaker model of the
unique generic sub--quotient, denoted by $J_l(\pi_F)$, of the mod-$l$
reduction of $\pi_F$. Here, $\overline{\psi}_F$ is the mod-$l$
reduction of $\psi_F$. Let $\pi_E$ be the base change lift of $\pi_F$
to ${\rm GL}_n(E)$. Similar notations for $\pi_E$ are followed where
$\overline{\psi}_F$ is replaced with $\overline{\psi}_E$. It is easy
to note that (Lemma \ref{inv_whit_model})
$\mathbb{W}(J_l(\pi_E),\overline{\psi}_E)$ is stable under the action
of ${\rm Gal}(E/F)$ on the space
${\rm Ind}_{N_n(E)}^{{\rm GL}_n(E)}\overline{\Theta}_E$. Let
$\mathbb{K}(J_l(\pi_F),\overline{\psi}_F)$ be the Kirillov model of
$J_l(\pi_F)$. Using the result \cite[Theorem
4.2]{matringe2022kirillov}, we get that the restriction to $P_n(F)$
map from $\mathbb{W}(J_l(\pi_F), \overline{\psi}_F)$ to
	$\mathbb{K}(J_l(\pi_F), \overline{\psi}_F)$ is a bijection.
	
        The Kirillov model $\mathbb{K}(J_l(\pi_E), \overline{\psi}_E)$
        contains the space of all smooth and compactly supported
        functions $\ind_{N_n(E)}^{P_n(E)}\overline{\Theta}_E$.  Recall
        that the Tate cohomology group
        $\widehat{H}^0(\mathbb{K}(J_l(\pi_E), \overline{\psi}_E))$
        (For definition, see Section \ref{Tate_cohomology}) is an
        $\overline{\mathbb{F}}_l$-representation of $P_n(F)$. Let
        $\Phi$ be the following map obtained by restriction of
        functions to $P_n(F)$:
$$ \Phi: \widehat{H}^0(\mathbb{K}(J_l(\pi_E), \overline{\psi}_E))
\longrightarrow {\rm Ind}_{N_n(F)}^{P_n(F)}\overline{\Theta}_F^{l}. $$ 
Using compactly supported functions, one can show that the inverse
image of $\mathbb{K}(J_l(\pi_F)^{(l)}, \overline{\psi}_F^l)$
under the map $\Phi$ is non-zero, and it is denoted by
$\mathcal{M}(\pi_F, \psi_F)$. Here, $J_l(\pi_F)^{(l)}$ is
the Frobenius twist of $J_l(\pi_F)$. To prove the main theorem, 
we show that
the space $\mathcal{M}(\pi_F, \psi_F)$ is stable under
${\rm GL}_n(F)$ and the restriction of $\Phi$ to the space
$\mathcal{M}(\pi_F, \psi_F)$ is ${\rm GL}_n(F)$
equivariant. This is just equivalent to showing that
\begin{equation}\label{intro_eq_rk}
	I\big(X, \Phi(J_l(\pi_E)(w_n)W),\sigma(w_{n-1})W'\big) =
	I\big(X,J_l(\pi_F)^{(l)}(w_n)\Phi(W),\sigma(w_{n-1})W'\big),
\end{equation}
for all $W\in \mathcal{M}(\pi_F, \psi_F)$ and
$W'\in \mathbb{W}(\sigma, \overline{\psi}_F^{-l})$, where $w_{n-1}, w_n$
are defined in subsection (\ref{15}); and $\sigma$ is an $l$-modular
generic representation of ${\rm GL}_{n-1}(F)$. Here, $I(X, W, W')$ is
a mod-$l$ Rankin--Selberg zeta functions written as a formal power
series in the variable $X$ instead of the traditional $q^{-s}$
(\cite[Section 3]{MR3595906}). We transfer the local Rankin--Selberg
zeta functions $I\big(X, \Phi(J_l(\pi_E)(w_n)W),\sigma(w_{n-1})W'\big)$
made from integrals on ${\rm GL}_{n-1}(F)/N_{n-1}(F)$ to
Rankin--Selberg zeta functions defined by integrals on
${\rm GL}_{n-1}(E)/N_{n-1}(E)$. Then, using local Rankin--Selberg
functional equation, we show that the equality in (\ref{intro_eq_rk})
is equivalent to certain identities of mod-$l$ local $\gamma$-factors,
such as \eqref{most_important_identity}.
	
We briefly explain the contents of this article. In Section \ref{x},
we recall various notations, conventions on integral representations,
Whittaker models and Kirillov models. In Section
\ref{Deligne_rep_section}, we collect various results on local
constants both in mod-$l$ and $l$-adic settings. In Section
\ref{LLC_section}, we put some well known results from $l$-adic local
Langlands correspondence. In Section \ref{Tate_cohomology}, we recall
and set up some initial results on Tate cohomology of smooth integral
representations as well as mod-$l$ representations. In Section
\ref{Tate_main_result}, we begin with a few observations on
compatibility of Jacquet and twisted Jacquet functors with Tate
cohomology. Then we prove our main result Theorem
\ref{noncuspidal_n_thm}. In Sections \ref{Tate_Zelevinsky} and
\ref{Tate_generic}, in the banal case, we completely compute the Tate
cohomology of the representations $Z(\Delta)$ and $L(\Delta)$ using
Theorem \ref{noncuspidal_n_thm}.
	
{\it Acknowledgements} We express our deep gratitude to the anonymous
referee for his careful reading of the manuscript and for various
suggestions. We thank Tony Feng for his interest and encouragement,
and to Akshay Venkatesh and Guy Henniart for their interest in this
work.
	
\section{Preliminaries}\label{x}
\subsection{}\label{16}
Let $K$ be a non-Archimedean local field and let
$\mathfrak{o}_K$ be the ring of integers of $K$. Let
$\mathfrak{p}_K$ be the maximal ideal of $\mathfrak{o}_K$ and
let $\varpi_K$ be a uniformizer of $K$. Let $q_K$ be the
cardinality of the residue field
$k_K=\mathfrak{o}_K/\mathfrak{p}_K$. Let
$\upsilon_K: K^\times \rightarrow \mathbb{Z}$ be the
normalized valuation. We denote by $\nu_K$ the normalized
absolute value of $K$ corresponding to $\upsilon_K$. Let $l$
and $p$ be two distinct odd primes. Let $F$ be a finite
extension of $\mathbb{Q}_p$, and let $E$ be a finite Galois
extension of $F$ with $[E:F] = l$. We denote the group
${\rm Gal}(E/F)$ by $\Gamma$.
	
\subsection{}\label{15}
For any ring $A$, let $M_{r \times s}(A)$ be the $A$-algebra
of all $r \times s$ matrices with entries from $A$. Let
$GL_n(K) \subseteq M_{n \times n}(K)$ be the group of all
invertible $n \times n$ matrices. We denote by $G_n(K)$ the
group ${\rm GL}_n(K)$ and $G_n(K)$ is equipped with locally
compact topology induced from the local field $K$. For
$r \in \mathbb{Z}$, let
$$ G_n^r(K) = \{g \in G_n(K) : \upsilon_K(\det(g)) = r\}. $$ 
We set $P_n(K)$, the mirabolic subgroup, defined as the group:
\begin{center}
$\bigg\{
\begin{pmatrix}
	A  &  M\\
	0  &  1
\end{pmatrix}
: A \in G_{n-1}(K), M \in M_{(n-1) \times 1}(K)\bigg\}$.
\end{center}
Let $B_n(K)$ be the group of all invertible upper triangular matrices
in $M_{n\times n}(K)$, and let $N_n(K)$ be its unipotent radical. We
denote by $w_n$ the following matrix of $G_n(K)$ :
\begin{center} $w_n =
\begin{pmatrix}
	0 &  &  &  & 1 \\
	&  &  &  1  \\
	&  &  . \\
	&  .   \\
	1 &  &  &  &  0
\end{pmatrix}.$ 
\end{center}
Let $X_K$ denote the coset space $N_{n-1}(K) \setminus
G_{n-1}(K)$. For $r \in \mathbb{Z}$, we denote the coset space
$\{N_{n-1}(K)g: g\in  G_{n-1}^r(K)\}$ by $X_K^r$.
	
\subsection{}
Fix an algebraic closure $\mathbb{\overline{Q}}_l$ of the field
$\mathbb{Q}_l$.  Let $\mathbb{\overline{Z}}_l$ be the integral closure
of $\mathbb{Z}_l$ in $\mathbb{\overline{Q}}_l$ and let
$\mathfrak{P}_l$ be the unique maximal ideal of
$\mathbb{\overline{Z}}_l$. We have
$\mathbb{\overline{Z}}_l/\mathfrak{P}_l \simeq
\mathbb{\overline{F}}_l$. We fix a square root of $q_F$ in
$\overline{\mathbb{Q}}_l$, and it is denoted by $q_F^{1/2}$. The
choice of $q_F^{1/2}$ is required for transferring the complex local
Langlands correspondence to a local $l$-adic Langlands correspondence
(see \cite[Chapter 8]{MR2234120}). Let $\mathcal{K}$ denote the
maximal unramified extension of $\mathbb{Q}_l$ in
$\overline{\mathbb{Q}}_l$, and let $\Lambda$ be the ring of integers
of $\mathcal{K}$. Let $W(\overline{\mathbb{F}}_l)$ be the ring of Witt
vectors of $\overline{\mathbb{F}}_l$. The prime number $l$ is called
{\it banal} for $G_n(K)$ if $l$ does not divide $|{\rm GL}_n(k_K)|$.
	
\subsection{Smooth representations and Integral representations}
Let $G$ be a locally compact and totally disconnected group. A
representation $(\pi,V)$ is said to be smooth if, for every vector
$v \in V$, the $G$-stabilizer of $v$ is an open subgroup of $G$. All
the representations are assumed to be smooth and the representation
spaces are vector spaces over $R$, where $R = \mathbb{\overline{Q}}_l$
or $\mathbb{\overline{F}}_l$. A representation $(\pi,V)$ is called
{\it $l$}-adic when $R=\mathbb{\overline{Q}}_l$ and $(\pi,V)$ is
called $l$-modular when $R=\mathbb{\overline{F}}_l$. We denote by
${\rm Irr}(G,R)$, the set of all irreducible smooth
$R$-representations of $G$. Let $C_c^\infty(G,R)$ denote the set of
all locally constant and compactly supported functions on $G$ taking
values in a ring $R$.
	
Let $(\pi,V)$ be an $l$-adic representation of $G$. A lattice in $V$
is a free $\mathbb{\overline{Z}}_l$-module $\mathcal{L}$ such that
$\mathcal{L} \otimes_{\mathbb{\overline{Z}}_l} \mathbb{\overline{Q}}_l
= V$. The representation $(\pi, V)$ is said to be integral if it has
finite length as a representation of $G$ and there exists a
$G$-invariant lattice $\mathcal{L}$ in $V$. A character is a smooth
one-dimensional representation $\chi: G \longrightarrow
R^\times$. For $G=G_n(K)$, a character
$\chi :K^\times \rightarrow R^\times$ induces a character
$\chi \circ {\rm det} : G_n(K) \rightarrow R^\times$. By abuse of
notation, we denote the character $\chi \circ {\rm det}$ by $\chi$. In
particular, the normalized absolute value of $K$ gives a character
$\nu_K$ of $G_n(K)$.  We say that a character
$\chi : G \longrightarrow \mathbb{\overline{Q}}_l^\times$ is integral
if it takes values in $\mathbb{\overline{Z}}_l$.
	
Let $(\pi,V)$ be an integral $l$-adic representation of $G$. Choose a
$G$-invariant lattice $\mathcal{L}$ in $V$. Then the group $G$ acts on
$\mathcal{L}
\otimes_{\mathbb{\overline{Z}}_l}\overline{\mathbb{F}}_l$, which is a
vector space over $\mathbb{\overline{F}}_l$. This gives an $l$-modular
representation, which depends on the choice of the $G$-invariant
lattice $\mathcal{L}$. By \cite[II. 5.11.a and 5.11.b]{MR1395151}, the representation
$\big(\pi, \mathcal{L}
\otimes_{\mathbb{\overline{Z}}_l}\overline{\mathbb{F}}_l\big)$ is of
finite length and its semisimplification is independent of the choice
of the $G$-invariant lattice $\mathcal{L}$ in $V$. We denote by
$r_l(\pi)$ the semisimplification of
$\big(\pi, \mathcal{L}
\otimes_{\mathbb{\overline{Z}}_l}\overline{\mathbb{F}}_l\big)$. The
representation $r_l(\pi)$ is called the mod-$l$ reduction of the
$l$-adic representation $\pi$. We say that an $l$-modular
representation $\sigma$ lifts to an integral $l$-adic representation
$\pi$ if there exists a $G$-invariant lattice
$\mathcal{L} \subseteq \pi$ such that
$\mathcal{L} \otimes_{\mathbb{\overline{Z}}_l}\overline{\mathbb{F}}_l
\simeq \sigma$.
	
\subsection{Parabolic induction.}\label{ss}
Let $H$ be a closed subgroup of $G$. Let ${\rm Ind}_H^G$ and ${\rm
ind}_H^G$ be the smooth induction functor and compact induction
functor respectively. We follow \cite{MR579172} for the definitions.
	
Set $G = G_n(K)$, $P = P_n(K)$ and $N = N_n(K)$, where $G_n(K)$,
$P_n(K)$ and $N_n(K)$ are defined in subsection (\ref{15}). Let
$\lambda =(n_1,n_2,....,n_t)$ be an ordered partition of $n$. Let
$Q_\lambda\subseteq G_n(K)$ be the group of matrices of the form
\begin{center}
$\begin{pmatrix}
	A_1 & * & * & * & *\\
	& A_2 & * & * & *\\
	&   & . & * & * \\
	&   &   & . & *\\
	&   &   &   & A_t
\end{pmatrix}$,
\end{center}
where $A_i \in G_{n_i}(K)$, for all $1\leq i\leq t$. Then
$Q_\lambda = M_\lambda \ltimes U_\lambda$, where $M_\lambda$ is the
group of block diagonal matrices of the form
\begin{center}
$\begin{pmatrix}
	A_1 &   &   &   &  \\
	& A_2 &   &   &  \\
	&   & . &   &   \\
	&   &   & . &  \\
	&   &   &   & A_t 
\end{pmatrix}$, $A_i \in G_{n_i}(K)$,
\end{center}
for all $1\leq i\leq t$
and $U_\lambda$ is the unipotent radical of $Q_\lambda$ consisting of
matrices of the form
\begin{center}
$U_\lambda =
\begin{pmatrix}
	I_{n_1} & * & * & * & *\\
	& I_{n_2} & * & * & *\\
	&   & . & * & * \\
	&   &   & . & *\\
	&   &   &   & I_{n_t} 
\end{pmatrix}$, 
\end{center}
where $I_{n_i}$ is the ${n_i} \times {n_i}$ identity matrix.
	
Let $\sigma$ be an $R$-representation of $M_\lambda$. Then the
representation $\sigma$ is considered as a representation of
$Q_\lambda$ by inflation via the map
$Q_\lambda \rightarrow Q_\lambda/U_\lambda \simeq M_\lambda$. The
induced representation ${\rm
Ind}_{Q_\lambda}^G(\sigma)$ is called the parabolic induction of
$\sigma$. We denote the normalized parabolic induction of
$\sigma$ corresponding to the partition
$\lambda$ by
$i_{Q_\lambda}^G(\sigma)$. For details, see \cite{MR579172}. Let
$\lambda = (n_1,....,n_s)$ be a partition of $n$ and let
$\sigma_i$ be $R$-representation of $G_{n_i}$ for each
$i$. We denote the parabolic induction $i_{Q_\lambda}^G(\sigma_1
\otimes\cdots\otimes \sigma_s)$ by the product symbol $\sigma_1
\times\cdots\times \sigma_s$.
	
\subsubsection{}
Let $\lambda$ be an ordered partition of $n$. Let
$\sigma$ be an integral $l$-adic representation of
$M_\lambda$ and let $\mathcal{L}$ be a
$G$-invariant lattice in
$\sigma$. Then by \cite[I. 9.3]{MR1395151}, the space
$i_{Q_\lambda}^G(\mathcal{L})$, consisting of functions in
$i_{Q_\lambda}^G(\sigma)$ taking values in
$\mathcal{L}$, is a
$G$-invariant lattice in $i_{Q_\lambda}^G(\sigma)$. Moreover, we have
$$ i_{Q_\lambda}^G(\mathcal{L} \otimes_{\mathbb{\overline{Z}}_l}
\mathbb{\overline{F}}_l) \simeq i_{Q_\lambda}^G(\mathcal{L})
\otimes_{\mathbb{\overline{Z}}_l} \mathbb{\overline{F}}_l. $$
Hence parabolic induction commutes with reduction modulo $l$ that is,
$$ 
r_l(i_{Q_\lambda}^G(\sigma)) \simeq [i_{Q_{\lambda}}^G(r_l(\sigma))], 
$$
where the square bracket denotes the semisimplification of
$i_{Q_\lambda}^G(r_l(\sigma))$. 
	
\subsection{Cuspidal and Supercuspidal representation}
Keeping the notation as in (\ref{ss}). Let $\pi$ be an irreducible
$R$-representation of $G$. Then $\pi$ is called a cuspidal
representation if for all proper subgroups
$Q_\lambda =M_\lambda \ltimes U_\lambda$ of $G$ and for all
irreducible $R$-representations $\sigma$ of $M_\lambda$, we have
\begin{center}
$\rm Hom$$_G(\pi, i_Q^G(\sigma)) = 0$.
\end{center}
The representation
$\pi$ is called supercuspidal if for all proper subgroups $Q_\lambda =
M_\lambda \ltimes U_\lambda$ of
$G$ and for all irreducible $R$-representations
$\sigma$ of $M_\lambda$, the representation
$(\pi,V)$ is not a subquotient of $i_Q^G(\sigma)$.
\begin{remark}
\rm Let $k$ be an algebraically closed field, and let
$\pi$ be a smooth $k$-representation of $G$. If the characteristic of
$k$ is $0$, then $\pi$ is cuspidal if and only if
$\pi$ is supercuspidal. But when characteristic of $k$ is $l >
0$, there are cuspidal representations of
$G$ which are not supercuspidal. For details, see \cite[Section 2.5,
Chapter 2]{MR1395151}.
\end{remark}
	
\subsection{Generic representation}\label{rr}
Let $\psi_K : K \rightarrow
R^\times$ be a non-trivial additive character of $K$. Let
$\Theta_K$ be the character of $N_n(K)$, defined by
$$ \Theta_K(x_{ij}) := \psi_K(\sum_{i=1}^{n-1}x_{i,i+1}). $$
Let $(\pi,V)$ be an irreducible $R$-representation of
$G_n(K)$. Then recall that
$$ {\rm dim}_R\big({\rm Hom}_{N_n(K)}(\pi,\Theta_K)\big) \leq 1. $$
For the proof, see \cite{MR0425030} when
$R=\mathbb{\overline{Q}}_l$ and see \cite{MR1395151} when
$R=\mathbb{\overline{F}}_l$.  An irreducible
$R$-representation $(\pi,V)$ of $G_n(K)$ is called {\it generic} if
$$ {\rm dim}_R\big({\rm Hom}_{N_n(K)}(\pi,\Theta_K)\big) = 1. $$
	
\subsubsection{Whittaker Model}\label{whittaker_recap}
Let $(\pi,V)$ be a generic $R$-representation of $G_n(K)$. By
Frobenius reciprocity, the representation $\pi$ is embedded in the
space ${\rm Ind}_{N_n(K)}^{G_n(K)}(\Theta_K)$. Let $\mathcal{W}_{\pi}$
be a non-zero linear functional in the space
${\rm Hom}_{N_n(K)}(\pi,\Theta_K)$. Let
$\mathbb{W}(\pi,\psi_K) \subset {\rm Ind}_{N_n(K)}^{G_n(K)}(\Theta_K)$
be the space consisting of functions $W_v$, $v \in V$, where
$$ W_v(g) := \mathcal{W}_{\pi}\big(\pi(g)v\big), $$
for all $g \in G_n(K)$. Then the map $v \mapsto W_v$ induces an isomorphism
from $(\pi,V)$ to $\mathbb{W}(\pi,\psi_K)$. 
	
\subsubsection{Segments}\label{segment}
In this subsection, we recall the notion of segments and its
associated representations. For details, see \cite{MR584084} for
$R = \overline{\mathbb{Q}}_l$ and \cite{MR3595906}, \cite{MR3178433}
for $R = \overline{\mathbb{F}}_l$.
	
Let $r,t \in \mathbb{Z}$ with $r \leq t$. A segment is a sequence
$\Delta = (\nu_K^r\sigma, \nu_K^{r+1}\sigma,...,\nu_K^t\sigma)$, with
$\sigma$ a cuspidal $R$-representation of $G_n(K)$. The length of
$\Delta$ is defined to be $t-r+1$. In \cite[Definition 7.5]{MR3178433}
the authors, using Bushnell-Kutzko's simple types and the Hecke algebras
associated with them, defined a certain quotient of the
parabolically induced representation
$$ \tau = \nu_K^r\sigma \times \nu_K^{r+1}\sigma
\times\cdots\times \nu_K^t\sigma,$$ denoted by $\mathcal{L}(\Delta)$. The
normalised Jacquet module of $\mathcal{L}(\Delta)$ with respect to the
opposite of the parabolic subgroup $P_{(n,\cdots, n)}$ is equal to
$$ \nu_K^r\sigma \otimes \nu_K^{r+1}\sigma
\otimes\cdots \otimes \nu_K^t\sigma. $$
Moreover, there is a unique generic sub-quotient of $\tau$, denoted by
${\rm St}(\sigma,[r,t])$ and it is called the generalised Steinberg
representation associated to $\Delta$. We denote by
${\rm St}(\sigma,k)$ the representation ${\rm St}(\sigma,[0,k-1])$,
for $k\geq 1$.
	
\subsubsection{}
Let $\sigma$ be a cuspidal $R$-representation of $G_n(K)$. The set
$\{\nu_K^r\sigma :r \in \mathbb{Z}\}$ is called the {\it cuspidal
line} of $\sigma$ and the cardinality of this set is denoted by
$o(\sigma)$. Recall that \cite[Section 5.2]{MR3178433} defines a
positive integer $e(\sigma)$ as follows:
\begin{equation}
e(\sigma)= 
\begin{cases}
		+\infty   & \text{if } R=\overline{\mathbb{Q}}_l;\\
		o(\sigma) & \text{if } R=\overline{\mathbb{F}}_l \
		\text{and } o(\sigma) > 1;\\
		l & \text{if } R=\overline{\mathbb{F}}_l\ \text{and
		} o(\sigma) =1.
\end{cases}
\end{equation}
Then for a segment $\Delta = (\nu_K^r\sigma,\dots,\nu_K^t\sigma)$,
with $r \leq t$, the representation $\mathcal{L}(\Delta)$ is equal to
${\rm St}(\sigma,[r,t])$ if and only if the length of the segment
$\Delta$ is less than $e(\sigma)$(\cite[Remarque 8.14]{MR3178433}). In
this case, the segment $\Delta$ is called a generic segment. Note that
every segment is generic for $R=\overline{\mathbb{Q}}_l$.
	
\subsubsection{}
Two segments $\Delta_1$ and $\Delta_2$ are said to be linked if
$\Delta_1 \nsubseteq \Delta_2$, $\Delta_2 \nsubseteq \Delta_1$ and
$\Delta_1 \cup \Delta_2$ is a segment. The following theorem is proved
by \cite[Theorem 9.10]{MR3178433} for $R = \overline{\mathbb{F}}_l$
and \cite[Theorem 9.7]{MR584084} for $R = \overline{\mathbb{Q}}_l$.
\begin{theorem}
Let
$\pi=\mathcal{L}(\Delta_1) \times\cdots\times
\mathcal{L}(\Delta_t)$ be an $R$-representation of $G_n(K)$, where
each $\Delta_j$ is a generic segment. Then $\pi$ is irreducible if and
only if the segments $\Delta_i$ and $\Delta_j$ are not linked for
all $i,j$ with $i \ne j$.
\end{theorem}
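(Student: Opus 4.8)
The plan is to follow Zelevinsky's strategy: reduce the statement to the case of two segments, and treat that case by computing Jacquet modules with the geometric lemma of Bernstein--Zelevinsky. Two standard inputs I would take for granted are that $Q(\Delta)$ is, by construction, the unique irreducible quotient of the standard module $\sigma \times \sigma\nu_K \times \cdots \times \sigma\nu_K^{r-1}$ attached to $\Delta = \{\sigma,\dots,\sigma\nu_K^{r-1}\}$, and that the Jacquet modules of $Q(\Delta)$ along maximal parabolics are irreducible, obtained by removing an endpoint from $\Delta$, so that every Jacquet module of $Q(\Delta)$ is multiplicity free; for $R=\overline{\mathbb{F}}_l$ these are the computations of \cite{MR3178433}, and they are where the integer $e(\sigma)$ and the genericity of the segments enter. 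I also use that parabolic induction is exact and that the induced product on the Grothendieck group is commutative and associative (see \cite{MR1395151}).

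\emph{Step 1: two segments.} First I would dispose of the nested case $\Delta_1\subseteq\Delta_2$: then $Q(\Delta_1)\times Q(\Delta_2)$ is irreducible, and $Q(\Delta_1)\times Q(\Delta_2)\cong Q(\Delta_2)\times Q(\Delta_1)$; this is checked by computing, via the geometric lemma, the relevant $\operatorname{Hom}$ spaces between the two orderings from the multiplicity-free Jacquet modules. Next, if $\Delta_1$ and $\Delta_2$ are linked, write the segment $\Delta_1\cup\Delta_2$ and the (possibly empty) segment $\Delta_1\cap\Delta_2$; there is a short exact sequence
\[
0 \longrightarrow Q(\Delta_1\cup\Delta_2)\times Q(\Delta_1\cap\Delta_2) \longrightarrow Q(\Delta_1)\times Q(\Delta_2) \longrightarrow L \longrightarrow 0,
\]
where $L$ is the Langlands quotient (and $Q(\Delta_1\cup\Delta_2)$ is simply the unique irreducible quotient of the corresponding standard module, not necessarily a generalised Steinberg representation). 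Since $\Delta_1\cup\Delta_2$ and $\Delta_1\cap\Delta_2$ are nested, the subobject is irreducible by the nested case, and $L\neq 0$, so $Q(\Delta_1)\times Q(\Delta_2)$ has length exactly $2$ and is reducible. Finally, if $\Delta_1$ and $\Delta_2$ are unlinked but neither contains the other, then $\Delta_1\cup\Delta_2$ is not a segment, and a direct inspection of the geometric-lemma filtration of the Jacquet modules shows $Q(\Delta_1)\times Q(\Delta_2)$ has no proper nonzero subrepresentation, hence is irreducible (again isomorphic to $Q(\Delta_2)\times Q(\Delta_1)$).

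\emph{Step 2: from two segments to $t$ segments.} For the ``only if'' direction, suppose $\Delta_i$ and $\Delta_j$ are linked. Since the product is commutative in the Grothendieck group, Step 1 gives
\[
[\pi] \;=\; \Bigl[\,Q(\Delta_i\cup\Delta_j)\times Q(\Delta_i\cap\Delta_j)\times\!\!\prod_{k\neq i,j}\!Q(\Delta_k)\,\Bigr] \;+\; \Bigl[\,L\times\!\!\prod_{k\neq i,j}\!Q(\Delta_k)\,\Bigr],
\]
a sum of two nonzero effective classes; hence $\pi$ has at least two Jordan--H\"older constituents and is reducible. For the ``if'' direction, assume no two segments are linked. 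By Step 1 each $Q(\Delta_i)\times Q(\Delta_j)$ is irreducible, and one concludes by the standard principle that reducibility of a product of irreducible representations of $G_n(K)$ is detected pairwise: if $\rho_1,\dots,\rho_t$ are irreducible and $\rho_i\times\rho_j$ is irreducible for all $i\neq j$, then $\rho_1\times\cdots\times\rho_t$ is irreducible. This is proved by factoring the standard long intertwining operator from $\rho_1\times\cdots\times\rho_t$ to the reversed product into rank-one pieces, each swapping two adjacent factors and each an isomorphism precisely because the corresponding $\rho_i\times\rho_j$ is irreducible; the composite is then bijective, which forces the module to be irreducible. Applying this with $\rho_i=Q(\Delta_i)$ completes the proof.

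\emph{Main obstacle.} For $R=\overline{\mathbb{Q}}_l$ every segment is generic and the above is essentially Zelevinsky's argument \cite{MR584084}. The points specific to $R=\overline{\mathbb{F}}_l$ are that the classical analytic inputs normally used to run the pairwise-detection principle (unitarity, control of intertwining operators) are not available, and that the Jacquet-module bookkeeping must be done carefully enough to keep the Langlands quotient multiplicity free in characteristic $l$; both are exactly what is supplied in \cite{MR3178433}. I expect the pairwise-detection step to be the real heart of the matter, the remaining arguments being formal manipulations with exactness of parabolic induction and the geometric lemma.
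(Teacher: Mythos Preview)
The paper does not actually prove this theorem: it merely recalls the statement and cites \cite[Theorem 9.7]{MR584084} for $R=\overline{\mathbb{Q}}_l$ and \cite[Theorem 9.10]{MR3178433} for $R=\overline{\mathbb{F}}_l$. Your sketch is a reasonable reconstruction of the Zelevinsky-style argument underlying those references, so there is no independent ``paper's proof'' to compare with.

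One genuine gap to note in your Step~2 (``if'' direction): the assertion that bijectivity of the long intertwining operator ``forces the module to be irreducible'' does not follow from what you wrote. A bijective intertwiner only gives $Q(\Delta_1)\times\cdots\times Q(\Delta_t)\cong Q(\Delta_t)\times\cdots\times Q(\Delta_1)$; both sides could in principle be reducible. What makes the argument close is an additional input you left implicit: after ordering the unlinked segments so that none precedes another, the product is a \emph{standard module} in the Langlands classification, hence has a unique irreducible quotient, and the image of the long intertwiner is precisely that quotient. Only with this in hand does bijectivity yield irreducibility. In characteristic~$0$ this is classical; in the modular case it is part of the M\'inguez--S\'echerre machinery you cite, and is exactly where the genericity hypothesis on the $\Delta_j$ is used. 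Also, the ``pairwise-detection principle'' as you state it for arbitrary irreducibles $\rho_i$ is not a general theorem and should not be invoked as such; it is the specific structure of the $Q(\Delta_i)$ that makes it work here.
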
  
An $R$-representation of the form
$\mathcal{L}(\Delta_1)\times\cdots\times \mathcal{L}(\Delta_t)$, where
each $\Delta_i$ is generic, is called a representation of {\it
  Whittaker type}. In \cite[Theorem 9.7]{MR584084} and
\cite[Proposition V.3]{Vigneras_Induced}, it is shown that
\begin{theorem}
An $R$-representation $\pi$ of $G_n(K)$ is generic if and only
if $\pi$ is an irreducible $R$-representation of Whittaker type.
\end{theorem}
	
\subsubsection{}\label{SL}
In this subsection, we fix a standard lift of an $l$-modular generic
representation of $G_n(K)$. First, recall that any $l$-modular cuspidal
representation of $G_m(K)$ can be lifted to an $l$-adic cuspidal
representation of $G_m(K)$ (see \cite[Chapter 3,
4.25]{MR1395151}). Let $\rho$ be an $l$-modular cuspidal
representation of $G_m(K)$ and let
$\Delta = (\rho,\overline{\nu}_K\rho,\dots,
\overline{\nu}_K^{r-1}\rho)$ be a segment associated with $\rho$,
where $\overline{\nu}_K$ is the mod-$l$ reduction of $\nu_K$. Let
$\sigma$ be a cuspidal lift of $\rho$. Then the segment
$D = (\sigma,\nu_K\sigma,\dots,\nu_K^{r-1}\sigma)$ is called a
standard lift of $\Delta$. When
$\mathcal{L}(\Delta) = {\rm St}(\rho,r)$, then the mod-$l$
representation $\mathcal{L}(\Delta)$ lifts to
$\mathcal{L}(D) = {\rm St}(\sigma,r)$ (\cite[Proposition
2.16]{MR3595906}).
	
Let $\pi$ be a generic $l$-modular representation of $G_n(K)$. Then
$\pi$ is of the form
$\mathcal{L}(\Delta_1)\times\cdots\times \mathcal{L}(\Delta_t)$, where
each $\Delta_i$ is a generic segment. For each $1\leq i \leq t$, let
$D_i$ be a standard lift of $\Delta_i$. Then the $l$-adic
representation
$\tau = \mathcal{L}(D_1)\times\cdots\times \mathcal{L}(D_t)$ is
generic, and $\pi$ lifts to $\tau$ (\cite[Remark
2.31]{MR3595906}). The representation $\tau$ is called a {\it standard
lift} of $\pi$.
	
\subsubsection{}\label{vigneras_whittaker_lattice}
Let $(\pi,V)$ be an integral, $l$-adic, generic representation of
$G_n(K)$. We fix a non-trivial additive character
$\psi_K:K\rightarrow \Lambda^\times$. By abuse of notation, the
composition
$K\xrightarrow{\psi_K} \Lambda^\times \hookrightarrow
\overline{\mathbb{Q}}_l^\times$ is also denoted by $\psi_K$. Consider
the space $\mathbb{W}^0(\pi,\psi_K)$ consisting of
$W \in \mathbb{W}(\pi,\psi_K)$, taking values in
$\mathbb{\overline{Z}}_l$. It follows from \cite[Theorem 2]{MR2058628}
that the $\overline{\mathbb{Z}}_l$-module $\mathbb{W}^0(\pi,\psi_K)$
is a $G_n(K)$-invariant lattice in $\mathbb{W}(\pi,\psi_K)$. The
lattice $\mathbb{W}^0(\pi,\psi_K)$ is also called the {\it integral
  Whittaker model} or {\it Whittaker lattice}. Let $\tau$ be an
$l$-modular generic representation of $G_n(K)$, and let $\pi$ be an
$l$-adic generic representation of $G_n(K)$. Then, the representation
$\pi$ is called a Whittaker lift of $\tau$ if there exists a lattice
$\mathcal{L} \subseteq \mathbb{W}^0(\pi,\psi_K)$ such that
$$ \mathcal{L} \otimes_{\overline{\mathbb{Z}}_l} \overline{\mathbb{F}}_l \simeq
\mathbb{W}(\tau, \overline{\psi}_K), $$
where $\overline{\psi}_K$ is the reduction mod-$l$ of $\psi_K$. Note
that any standard lift of an $l$-modular generic representation $\pi$
is a Whittaker lift (see \cite[Theorem 2.26]{MR3595906}). Let 
$(\pi, V)$ be an $l$-adic integral generic representation defined 
over the field $\mathcal{K}$. Let $V'$ be a $\mathcal{K}$-structure 
in $V$. There exists a $\Lambda[G_n(K)]$-lattice in $V'$ 
(see \cite[Corollary 4.4.4]{MR3250061}). Thus, we get that 
the set of $\Lambda$-valued functions, denoted by 
$\mathbb{W}_\Lambda(\pi, \psi_K)$, in $\mathbb{W}^0(\pi, \psi_K)$
is a $G_n(K)$-stable lattice. 
	
\subsubsection{}\label{sr}
Now we follow the notations as in (\ref{16}). Choose a generator
$\gamma$ of $\Gamma$. Let $\pi$ be an $R$-representation of
$G_n(E)$. The group $\Gamma = {\rm Gal}(E/F)$ acts on
$G_n(E)$ component-wise i.e., for $\gamma \in \Gamma, g =
(a_{ij})_{i,j=1}^n \in G_n(E)$, we set
\begin{center}
$\gamma. g := (\gamma(a_{ij}))_{i,j =1}^n$.
\end{center}
Let $\pi^\gamma$ be the representation of $G_n(E)$ on $V$, defined by
\begin{center}
$\pi^\gamma (g) := \pi(\gamma.g)$, for all $g \in G_n(E)$.
\end{center}
We say that the representation $\pi$ of
$G_n(E)$ is $\Gamma$-equivariant if the representations
$\pi$ and
$\pi^\gamma$ are isomorphic.  We now prove a lemma concerning the
$\Gamma$ invariance of the Whittaker model of a $\Gamma$-equivariant
representation $\pi$ of $G_n(E)$. Let $\psi_F$ and
$\psi_E$ be the non-trivial additive characters of $F$ and 
$E$ respectively such that $\psi_E = \psi_F \circ {\rm
Tr}_{E/F}$ where, ${\rm
Tr}_{E/F}$ is the trace map of the extension $E/F$. Let
$\Theta_F$ and $\Theta_E$ be the characters of $N_n(F)$ and
$N_n(E)$ respectively, as defined in (\ref{rr}).
Now consider the action of $\Gamma$ on the space ${\rm
Ind}_{N_n(E)}^{G_n(E)}(\Theta_E)$, given by
$$ (\gamma . f)(g) := f(\gamma^{-1}g), $$
for all $\gamma \in \Gamma$, $g \in G_n(E)$ and
$f \in \rm Ind$$_{N_n(E)}^{G_n(E)}(\Theta_E)$.
\begin{lemma}\label{inv_whit_model}
Let $(\pi, V)$ be a generic $R$-representation of $G_n(E)$ such 
that $(\pi, V)$ is $\Gamma$-equivariant. Then the Whittaker model
$\mathbb{W}(\pi, \psi_E)$ of $\pi$ is invariant under the action
of $\Gamma$.
\end{lemma}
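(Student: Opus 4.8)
The plan is to show that the action of $\Gamma$ preserves the subspace $\mathbb{W}(\pi,\psi_E)\subset \mathrm{Ind}_{N_n(E)}^{G_n(E)}(\Theta_E)$ by producing, for the chosen generator $\gamma$, an explicit automorphism of $V$ that intertwines the $\gamma$-twisted action with the $\gamma$-translation of Whittaker functions. First I would record that the given action $(\gamma\cdot f)(g)=f(\gamma^{-1}g)$ really does send $\mathrm{Ind}_{N_n(E)}^{G_n(E)}(\Theta_E)$ to itself: this uses precisely the compatibility $\Theta_E=\Theta_F\circ\mathrm{Tr}_{E/F}$, equivalently $\Theta_E(\gamma\cdot u)=\Theta_E(u)$ for $u\in N_n(E)$, since $\mathrm{Tr}_{E/F}$ is $\Gamma$-invariant; hence for $f$ in the induced space one checks $(\gamma\cdot f)(ug)=\Theta_E(u)(\gamma\cdot f)(g)$ directly. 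So $\Gamma$ acts on the big induced representation, and the content of the lemma is that the Whittaker model is a $\Gamma$-stable lattice of subspaces, i.e.\ a subrepresentation for this action.

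Next, the $\Gamma$-equivariance hypothesis gives an isomorphism $T\colon(\pi,V)\xrightarrow{\sim}(\pi^\gamma,V)$, i.e.\ $T\pi(g)=\pi(\gamma\cdot g)T$ for all $g$. Fix the Whittaker functional $W\in\mathrm{Hom}_N(\pi,\Theta_E)$ used to define the model, so $W_v(g)=W(\pi(g)v)$. I would consider the linear functional $v\mapsto W(Tv)$ on $V$ and check it lies in $\mathrm{Hom}_{N_n(E)}(\pi,\Theta_E)$: for $u\in N_n(E)$ one has $W(T\pi(u)v)=W(\pi(\gamma^{-1}\cdot u)Tv)$ — wait, one must be careful with which direction $\gamma$ goes; using $T\pi(g)=\pi(\gamma\cdot g)T$ with $g=\gamma^{-1}\cdot u$ gives $W(\pi(u)\,T\pi(\gamma^{-1}\cdot u)^{-1}\cdots)$, so the cleaner route is to directly compute the Whittaker function of $Tv$. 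Set $W'_v(g):=W(\pi(g)Tv)$; this is the Whittaker function in $\mathbb{W}(\pi,\psi_E)$ attached to the vector $Tv$, so $W'_v\in\mathbb{W}(\pi,\psi_E)$. On the other hand, using $\pi(g)T=T\pi(\gamma^{-1}\cdot g)$ one gets $W'_v(g)=W(T\pi(\gamma^{-1}\cdot g)v)$. If I now write $\ell:=W\circ T\in\mathrm{Hom}_{N_n(E)}(\pi,\Theta_E)$ — which holds because $W(T\pi(u)v)=W(\pi(\gamma\cdot u)Tv)=\Theta_E(\gamma\cdot u)W(Tv)=\Theta_E(u)W(Tv)$ — then by uniqueness of the Whittaker functional up to scalar, $\ell=cW$ for some $c\in R^\times$, whence $W'_v(g)=c\,W(\pi(\gamma^{-1}\cdot g)v)=c\,W_v(\gamma^{-1}\cdot g)=c\,(\gamma\cdot W_v)(g)$. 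Therefore $\gamma\cdot W_v=c^{-1}W'_v=c^{-1}W_{Tv}$, which is again an element of $\mathbb{W}(\pi,\psi_E)$. Since the $W_v$ span the Whittaker model and $\gamma$ was an arbitrary generator (and the identity obviously stabilizes the model, and the stabilized-by-$\gamma$ property propagates to all of $\Gamma$), the model is $\Gamma$-invariant.

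The one genuine subtlety — the step I expect to be the main obstacle to state cleanly — is the bookkeeping of the $\Gamma$-action directions: $\Gamma$ acts on $G_n(E)$ by $\gamma\cdot g=(\gamma(a_{ij}))$, on representations by $\pi^\gamma(g)=\pi(\gamma\cdot g)$, and on the induced space by left translation $(\gamma\cdot f)(g)=f(\gamma^{-1}g)$, and one must make sure the intertwiner $T$ is used on the correct side so that the scalar $c$ comes out of the one-dimensionality of $\mathrm{Hom}_N(\pi,\Theta_E)$ rather than of some twisted Hom-space. The only external inputs are the multiplicity-one statement $\dim_R\mathrm{Hom}_N(\pi,\Theta_K)\le 1$ recalled in subsection \ref{rr} (valid for both $R=\overline{\mathbb{Q}}_l$ and $R=\overline{\mathbb{F}}_l$) and the compatibility $\Theta_E=\Theta_F\circ\mathrm{Tr}_{E/F}$; everything else is the formal manipulation above. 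I would also remark that $c$ need not be $1$ but that is irrelevant here, since we only need $\Gamma$ to preserve the \emph{space} $\mathbb{W}(\pi,\psi_E)$, not to act on it through any particular normalization; pinning down $c$ (for instance via a Kirillov-model normalization at $w_{n}$) is exactly the kind of arithmetic input deferred to the later sections.
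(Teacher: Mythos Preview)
Your proof is correct and follows essentially the same idea as the paper's: both use the $\Gamma$-invariance of $\Theta_E$ (via $\Theta_E=\Theta_F\circ\mathrm{Tr}_{E/F}$) and the multiplicity-one statement for Whittaker functionals. The paper's argument is slightly more streamlined: rather than fixing an intertwiner $T$ and tracking the scalar $c$, it simply observes that $(\gamma^{-1}\cdot W_v)(g)=W(\pi^\gamma(g)v)$ exhibits $\gamma^{-1}\cdot W_v$ as an element of $\mathbb{W}(\pi^\gamma,\psi_E)$, and then invokes uniqueness of the Whittaker \emph{model} (since $\pi^\gamma\simeq\pi$, the two models coincide as subspaces of $\mathrm{Ind}_N^G\Theta_E$). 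Your version unpacks this last step explicitly via $T$ and $c$, which is a bit longer but has the advantage of giving the concrete formula $\gamma\cdot W_v=c^{-1}W_{Tv}$.
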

\begin{proof}
  Let
  $\mathcal{W}_{\pi}$ be a Whittaker functional on the representation
  $\pi$. For $v \in V$, we have
$$
\mathcal{W}_{\pi}(\pi^\gamma(x)v)= \Theta_E(\gamma(x))
\mathcal{W}_{\pi}(v) = (\psi_F
\circ {\rm Tr}_{E/F})\big(\sum_{i=1}^{n-1}\gamma
(x_{i,i+1})\big)\mathcal{W}_{\pi}(v) = 
\Theta_E(x)\mathcal{W}_{\pi}(v),
$$ 
for all $x \in N_n(E)$. Thus,
$\mathcal{W}_{\pi}$ is also a Whittaker functional for 
the representation $(\pi^\gamma, V)$. Let
$W_v \in \mathbb{W}(\pi,\psi_E)$. Then
$$ (\gamma^{-1}.W_v)(g) = \mathcal{W}_{\pi}(\pi^\gamma(g)v). $$ 
From the uniqueness of the Whittaker model, we have
$\gamma^{-1}.W_v \in
\mathbb{W}(\pi,\psi_E)$. Hence the lemma.
\end{proof}
\subsection{Kirillov Model}\label{kirrilov_model}
Let $\pi$ be a generic $R$-representation of
$G_n(K)$. Following the notations as in the subsections (\ref{ss}) and
(\ref{rr}), consider the space $\mathbb{K}(\pi,
\psi_K)$ of all elements $W$ restricted to $P_n(K)$, where
$W$ varies over $\mathbb{W}(\pi,
\psi_K)$. Then the space
$\mathbb{K}(\pi,\psi_K)$ is
$P_n(K)$-invariant. By Frobenius reciprocity, there is a non-zero
(unique upto a scalar) linear map $A_\pi : V \longrightarrow {\rm
  Ind}_{N_n(K)}^{P_n(K)}(\Theta_K)$, which is injective and compatible
with the action of $P_n(K)$. In fact,
\begin{center}
$A_\pi(V) = \mathbb{K}(\pi, \psi_K) \simeq \mathbb{W}(\pi,
\psi_K) \simeq \pi$.
\end{center}
Moreover, $\mathcal{K}(\psi_K) = {\rm ind}_{N_n(K)}^{P_n(K)}
(\Theta_K) \subseteq \mathbb{K}(\pi,
\psi_K)$ and the equality holds if
$\pi$ is cuspidal. The space of all
$\mathbb{\overline{Z}}_l$-valued functions in
$\mathbb{K}(\pi,\psi_K)$ (resp.
$\mathcal{K}(\psi_K)$) is denoted by $\mathbb{K}^0(\pi,
\psi_K)$ (resp. $\mathcal{K}^0(\psi_K)$).
	
\subsubsection{}
We now recall the Kirillov model for
$n=2$ and some of its properties. For details, see \cite{MR2234120}.
Up to isomorphism, any irreducible representation of
$P_2(K)$, which is not a character, is isomorphic to
\begin{equation}\label{f}
J_\psi := {\rm ind}_{N_2(K)}^{P_2(K)}(\psi),
\end{equation}
for some non-trivial smooth additive character $\psi$ of
$K$, viewed as character of $N_2(K)$ via standard isomorphism $N_2(K)
\simeq K$. Two different non-trivial characters of
$N_2(K)$ induce isomorphic representations of
$P_2(K)$. The space (\ref{f}) is identified with the space of locally
constant compactly supported functions on
$K^\times$, to be denoted by $C_c^\infty(K^\times,
\mathbb{\overline{Q}}_l)$. The action of
$P_2(K)$ on the space $C_c^\infty(K^\times,
\mathbb{\overline{Q}}_l)$ is given by
\begin{center}
$\bigg[J_\psi
\begin{pmatrix}
	a & 0\\
	0 & 1
\end{pmatrix}
f\bigg](y) = f(ay)$,\\
$\bigg[J_\psi
\begin{pmatrix}
	1 & x\\
	0 & 1
\end{pmatrix}
f\bigg](y) = \psi(xy) f(y)$,
\end{center}
for $a, y \in K^\times$ and $x \in K$.  For any cuspidal
representation of $(\pi, V)$ of $G_2(K)$, we get a model for the
representation $(\pi, V)$ on the space
$C_c^\infty(K^\times,\mathbb{\overline{Q}}_l)$. The action of
the group $G_2(K)$ on
$C_c^\infty(K^\times,\mathbb{\overline{Q}}_l)$ is denoted by
$\mathbb{K}_\psi^\pi$; by definition the restriction of
$\mathbb{K}_\psi^\pi$ to $P_2(K)$ is isomorphic to $J_\psi$. The
operator $\mathbb{K}_\psi^\pi(w)$ completely describes the
action of $G_2(K)$ on
$C_c^\infty(K^\times, \mathbb{\overline{Q}}_l)$, where
$$ w =
\begin{pmatrix}
	0 & 1\\
	-1 & 0
\end{pmatrix}.$$
Here, we follow the exposition in \cite[Section
37.3]{MR2234120}. Let $\chi$ be a smooth character of
$K^\times$ and let $k$ be an integer. Define a function 
$\xi\{\chi, k\}$ in $C_c^\infty(K^\times,
\mathbb{\overline{Q}}_l)$ by setting $\xi\{\chi, k\}(x) =
\chi(x)$, for $\nu_K(x)=k$ and zero otherwise.  Recall that 
$\nu_K$ is a discrete valuation on $K^\times$.
Then we have :
\begin{equation}
\mathbb{K}_\psi^\pi(w)\xi\{\chi,k\} =
\epsilon(\chi^{-1}\pi,\psi)
\xi\{\chi^{-1}\varpi_\pi, -n(\chi^{-1}\pi,\psi)
- k\},
\end{equation}
where $\varpi_\pi$ is the central character of
$\pi$. Here $\epsilon(\pi,
\psi)$ is the Godement--Jacquet local
$\epsilon$-factor associated with a cuspidal representation
$\pi$ and some additive character $\psi$ of $F$.
	
\section{Review of Local Constants and Weil-Deligne
representations}\label{Deligne_rep_section}
\subsection{}\label{17}
Keeping the notation as in Section \ref{x}, we briefly discuss
about the Weil group and its Weil-Deligne representations. For
a reference, see \cite[Chapter 7]{MR2234120} and \cite[Chapter
4]{MR0349635}.
	
We choose a separable algebraic closure $\overline{K}$ of
$K$. Let $\Omega_K$ be the absolute Galois group
${\rm Gal}(\overline{K}/K)$ and Let
$\mathcal{I}_K$ be the inertia subgroup of
$\Omega_K$. Let $\mathcal{W}_K$ denote the Weil group of
$K$. Fix a geometric Frobenius element ${\rm
Frob}$ in $\mathcal{W}_K$. Then we have 
$$ \mathcal{W}_K =
\mathcal{I}_K \rtimes \rm Frob^{\mathbb{Z}}. $$
	
There is a natural Krull topology on the absolute Galois group
$\Omega_K$ and the inertia group
$\mathcal{I}_K$, as a subgroup of
$\Omega_K$, is equipped with the subspace topology. Let the
fundamental system of neighbourhoods of the Weil group
$\mathcal{W}_K$ be such that each neighbourhood of the
identity
$\mathcal{W}_K$ contains an open subgroup of
$\mathcal{I}_K$. Then under this topology, the Weil group
$\mathcal{W}_K$ becomes a locally compact and totally
disconnected group. If $K_1/K$ is a finite extension with
$K_1\subseteq
\overline{K}$, then the Weil group
$\mathcal{W}_{K_1}$ is considered as a subgroup of
$\mathcal{W}_K$.
	
An $R$-representation $\rho$ of $\mathcal{W}_K$ is called
unramified if $\rho$ is trivial on $I_K$. Let $\nu$ be the
unramified character of $\mathcal{W}_K$ which satisfies
$\nu(\rm Frob)$ = $q_K^{-1}$. We now define semisimple
Weil-Deligne representations of $\mathcal{W}_K$.
	
\subsection{Semisimple Weil-Deligne representation}\label{18}
A Weil-Deligne representation of $\mathcal{W}_K$ is a pair
$(\rho, U)$, where $\rho$ is a finite dimensional $R$-representation
of $\mathcal{W}_K$ and $U$ is a nilpotent endomorphism of the vector
space underlying $\rho$ and intertwining the actions of $\nu\rho$ and
$\rho$.  A Weil-Deligne representation $(\rho, U)$ of $\mathcal{W}_K$
is called semisimple if $\rho$ is semisimple as a representation of
$\mathcal{W}_K$. Note that any semisimple representation $\rho$ of
$\mathcal{W}_K$ is considered as a semisimple Weil-Deligne
representation of the form $(\rho, 0)$. For two Weil-Deligne
representations $(\rho,U)$ and $(\rho', U')$ of $\mathcal{W}_K$, let
\begin{center}
${\rm Hom}_D((\rho,U),(\rho',U'))=\{f\in{\rm
Hom}_{\mathcal{W}_K}(\rho,\rho'):f\circ U= U'\circ f\}$,
\end{center}
We say that $(\rho, U)$ and $(\rho',U')$ are isomorphic if there
exists a map $f\in{\rm Hom}_D((\rho, U)$, $(\rho', U'))$ such that $f$
is bijective. Let $\mathcal{G}^n_{ss}(K)$ be the set of all
$n$-dimensional semisimple Weil-Deligne representations of the Weil
group $\mathcal{W}_K$.
	
\subsection{Local Constants of Weil-Deligne representation}
Keep the notations as in sections (\ref{17}) and (\ref{18}). In this subsection,
we consider the local constants for $l$-adic Weil-Deligne 
representations of $\mathcal{W}_K$.
\subsubsection{L-factors.}
Let $(\rho, U)$ be an $l$-adic semisimple Weil-Deligne representation
of $\mathcal{W}_K$. Then the $L$-factor corresponding to $(\rho, U)$
is the following rational function in $X$:
\begin{center}
$L(X,(\rho, U))$ = ${\rm det}((\id-X\rho({\rm
Frob}))|_{\ker(U)^{\mathcal{I}_K}})^{-1}$.
\end{center}
	
\subsubsection{Local $\epsilon$-factors and $\gamma$-factors}
Let $\psi_K: K \rightarrow \overline{\mathbb{Q}}_l^\times$ be a
non-trivial additive character and choose a self-dual additive Haar
measure on $K$ with respect to $\psi_K$. Let $\rho$ be an $l$-adic
representation of $\mathcal{W}_K$. The epsilon factor
$\epsilon(X,\rho,\psi_K)$ of $\rho$, relative to $\psi_K$ is defined
in \cite{MR0349635}. Let $K'/K$ be a finite extension inside
$\overline{K}$. Let $\psi_{K'}$ denotes the character of $K'$, where
$\psi_{K'} = \psi_K \circ {\rm Tr}_{K'/K}$.  Then the epsilon factor
satisfies the following properties :
\begin{enumerate}
\item If $\rho_1$ and $\rho_2$ are two $l$-adic
representations of $\mathcal{W}_{K}$, then
$$ \epsilon(X,\rho_1 \oplus \rho_2,\psi_K) = \epsilon(X,\rho_1,\psi_K)
\epsilon(X,\rho_2,\psi_K). $$
\item $\rho$ is an $l$-adic representation of $\mathcal{W}_{K'}$,
then
\begin{equation}\label{60}
  \frac{\epsilon\big(X, {\rm ind}_{\mathcal{W}_{K'}}^{\mathcal{W}_{K}}
    (\rho),
    \psi_K\big)}{\epsilon(X, \rho, \psi_{K'})}= 
  \left\{\frac{\epsilon\big(X, {\rm ind}_{\mathcal{W}_{K'}}^{\mathcal{W}_{K}}(1_{K'}),
      \psi_K\big)}{\epsilon(X, 1_{K'},\psi_{K'})}\right\}^{{\rm dim}(\rho)},
\end{equation}
where $1_{K'}$ denotes the trivial character of
$\mathcal{W}_{K}$.
\item If $\rho$ is an $l$-adic representation of $\mathcal{W}_{K}$,
then
\begin{equation}\label{61}
\epsilon\big(X, \rho,\psi_K\big) \epsilon\big(q_K^{-1}X^{-1}, 
\rho^\vee,\psi_K\big) = \rm det\big(\rho(-1)\big),
\end{equation}
where $\rho^\vee$ denotes the dual of the representation $\rho$.
\item For an $l$-adic representation $\rho$ of
$\mathcal{W}_K$, there exists an integer
$n(\rho,\psi_K)$ for which
$$ \epsilon(X,\rho,\psi_K) =
(q_K^{\frac{1}{2}}X)^{n(\rho,\psi_K)}\epsilon(\rho,\psi_K). $$
\end{enumerate}
Now for an $l$-adic semisimple Weil-Deligne representation
$(\rho, U)$, the $\epsilon$-factor is defined as
$$ \epsilon\big(X, (\rho, U), \psi_K\big) = \epsilon(X, \rho,
\psi_K)\frac{L(q_K^{-1}X^{-1},\rho^\vee)}{L(X,\rho)}
\frac{L(X,(\rho,U))}{L(q_K^{-1}X^{-1},(\rho, U)^\vee)}, $$ where
$(\rho, U)^\vee = (\rho^\vee, -U^\vee)$. Set
$$ \gamma(X,(\rho, U),\psi_K) =
\epsilon(X,(\rho,U),\psi_K)\frac{L(X,(\rho,U))}
{L(q_K^{-1}X^{-1},(\rho,U)^\vee)}. $$ The element
$\gamma(X, (\rho, U), \psi_K)$ is called the $\gamma$-factor of the
Weil-Deligne representation $(\rho, U)$.
	
Now we state a result \cite[Proposition 5.11]{MR4311563} which
concerns the fact that the $\gamma$-factors are compatible with
reduction modulo $l$. For $P \in \mathbb{\overline{Z}}_l[X]$, we
denote by $r_l(P) \in \mathbb{\overline{F}}_l[X]$ the polynomial
obtained by reduction mod-$l$ to the coefficients of $P$. For
$Q \in \mathbb{\overline{Z}}_l[X]$, such that $r_l(Q)\not= 0$, we set
$r_l(P/Q) = r_l(P)/r_l(Q)$.
\begin{proposition}\label{62}
  Let $\rho$ be an integral $l$-adic semisimple representation of
  $\mathcal{W}_K$. Then
$$ r_l\big(\gamma(X,\rho,\psi_K)\big) =
\gamma\big(X, r_l(\rho),\overline{\psi}_K\big), $$
where $\overline{\psi}_K$ is the reduction mod-$l$ of
$\psi_K$.
\end{proposition}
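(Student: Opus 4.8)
The plan is to deduce the compatibility $r_l(\gamma(X,\rho,\psi_K)) = \gamma(X, r_l(\rho), \overline{\psi}_K)$ by peeling the $\gamma$-factor back to its two constituents -- the $\epsilon$-factor of the underlying Weil-group representation and the ratio of $L$-factors -- and checking each separately. Since $\rho$ is semisimple as a Weil--Deligne representation, we may take $U=0$, so $L(X,(\rho,0)) = L(X,\rho)$ and the Deligne-correction quotient in the definition of $\epsilon(X,(\rho,0),\psi_K)$ collapses, leaving
\[
\gamma(X,\rho,\psi_K) = \epsilon(X,\rho,\psi_K)\,\frac{L(q_K^{-1}X^{-1},\rho^\vee)}{L(X,\rho)}.
\]
So it suffices to prove (i) $r_l(\epsilon(X,\rho,\psi_K)) = \epsilon(X, r_l(\rho), \overline{\psi}_K)$ and (ii) $r_l$ applied to the $L$-ratio equals the $L$-ratio of $r_l(\rho)$, i.e.\ $r_l\big(L(q_K^{-1}X^{-1},\rho^\vee)\big) = L(q_K^{-1}X^{-1}, r_l(\rho)^\vee)$ and similarly for $L(X,\rho)$ in the denominator (here we must also verify the denominator's reduction is nonzero, so that $r_l(P/Q)$ is defined).

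First I would handle the $\epsilon$-factor. Because $\rho$ is integral, it stabilizes a $\overline{\mathbb{Z}}_l$-lattice, and one reduces to the case of an irreducible $\rho$ using multiplicativity (property (1) of the $\epsilon$-factor) together with the fact that $r_l$ commutes with direct sums of the semisimplification. For irreducible $\rho$, by Brauer induction $\rho$ is a virtual sum of representations induced from characters of open subgroups $\mathcal{W}_{K'}$; property (2) (inductivity in degree zero) then reduces the computation to $\epsilon$-factors of integral characters $\chi$ of $\mathcal{W}_{K'}$, i.e.\ via local class field theory to integral characters of $K'^\times$. For such a character the $\epsilon$-factor is the classical Tate--Gauss-sum expression $\sum \chi^{-1}(x)\psi_{K'}(x)$ over a finite set of cosets, which manifestly lies in $\overline{\mathbb{Z}}_l$ and whose reduction mod $l$ is the corresponding Gauss sum for $r_l(\chi)$ -- provided the conductor does not change under reduction. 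The conductor of $\chi$ and of $r_l(\chi)$ agree because $l \neq p$ forces $r_l$ to be injective on the (finite, prime-to-$l$) image of wild inertia, hence on the ramification filtration. This is the step I expect to be the main obstacle: tracking that the inductive constant $\lambda$-factors in (\ref{60}) also reduce correctly, and that conductors/Langlands constants are unchanged, has to be done carefully rather than invoked -- though the essential input, that $r_l$ preserves the $p$-part of ramification data when $l\neq p$, is standard and I would cite it.

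For the $L$-factors, the point is that $L(X,\rho)^{-1} = \det(\mathrm{Id} - X\rho(\mathrm{Frob})\mid \rho^{\mathcal{I}_K})$ is a polynomial in $\overline{\mathbb{Z}}_l[X]$ with constant term $1$, since $\rho$ is integral so $\rho(\mathrm{Frob})$ preserves a lattice and its characteristic polynomial on the (lattice-saturated) inertia-invariants has $\overline{\mathbb{Z}}_l$-coefficients. Its reduction is $\det(\mathrm{Id} - X\, r_l(\rho)(\mathrm{Frob})\mid r_l(\rho)^{\mathcal{I}_K})^{-1}$, using that taking inertia-invariants commutes with reduction up to semisimplification (again because $\mathcal{I}_K$ acts through a finite quotient of order prime to $l$, so $\overline{\mathbb{F}}_l[\mathcal{I}_K$-image$]$ is semisimple and invariants are exact). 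In particular $r_l(L(X,\rho)^{-1})$ has constant term $1$, hence is nonzero, so $r_l(L(q_K^{-1}X^{-1},\rho^\vee)/L(X,\rho))$ is well-defined and equals $L(q_K^{-1}X^{-1}, r_l(\rho)^\vee)/L(X, r_l(\rho))$. Combining (i) and (ii) and using that $r_l$ is a ring homomorphism $\overline{\mathbb{Z}}_l[X] \to \overline{\mathbb{F}}_l[X]$ compatible with the localization defining $r_l(P/Q)$ yields the claimed identity. Since all of this is essentially the content of \cite[Proposition 5.11]{MR4311563} (and the analogous statements in \cite{MR3867634}, \cite{MR3595906}), in the actual write-up I would simply cite that proposition and record here only the consequence; but the sketch above is the argument underlying it.
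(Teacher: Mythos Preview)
The paper does not prove this proposition; it is simply quoted from \cite[Proposition 5.11]{MR4311563}. Your bottom line --- to cite that reference in the actual write-up --- therefore agrees with the paper exactly.

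That said, the sketch you offer for the underlying argument has a genuine gap. You propose to prove separately that (i) $r_l\big(\epsilon(X,\rho,\psi_K)\big) = \epsilon\big(X, r_l(\rho), \overline{\psi}_K\big)$ and (ii) $r_l\big(L(X,\rho)\big) = L\big(X, r_l(\rho)\big)$, but both of these statements are \emph{false} in general. The error is the assertion that ``$\mathcal{I}_K$ acts through a finite quotient of order prime to $l$'': this holds for wild inertia, but the image of tame inertia can certainly have order divisible by $l$ whenever $l \mid q_K-1$. Concretely, if $\chi$ is a tamely ramified character of $K^\times$ of exact order $l$, then $\chi$ has Artin conductor $1$ and $L(X,\chi)=1$, whereas $r_l(\chi)$ is the trivial unramified character, with conductor $0$ and $L\big(X,r_l(\chi)\big)=(1-X)^{-1}$. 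So neither the conductor (hence the degree of $\epsilon$) nor the space of inertia-invariants (hence $L$) is preserved by $r_l$ individually.

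What \emph{is} preserved is the full $\gamma$-factor: the discrepancies in $\epsilon$ and in the ratio $L(q_K^{-1}X^{-1},\rho^\vee)/L(X,\rho)$ cancel. The overall shape of your argument (multiplicativity, Brauer induction, reduction to characters) is correct, but it must be run for $\gamma$ directly rather than for its constituents; since $L$-factors are inductive without correction, $\gamma$ inherits the same degree-zero inductivity as $\epsilon$, and the reduction to characters goes through. In the character case one then computes $\gamma(X,\chi,\psi_K)$ as a Tate local constant, a single finite sum whose reduction mod $l$ is visibly the corresponding sum for $r_l(\chi)$ --- the change in conductor being absorbed by the change in the $L$-ratio.
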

We end this subsection with a lemma which will be needed later in the
proof of Theorem (\ref{intro_n_thm}).
\begin{lemma}\label{i}
Let $E/F$ be a cyclic Galois extension of prime degree $l$ and
assume $l \not= 2$.  Let $\rho$ be an $l$-adic
representation of $\mathcal{W}_E$ of even dimension. Then
\begin{center}
$\epsilon(X, \rho, \psi_E) = \epsilon\big(X, {\rm
ind}_{\mathcal{W}_E}^{\mathcal{W}_F}(\rho),\psi_F\big)$.
\end{center}
\end{lemma}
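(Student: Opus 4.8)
The plan is to reduce the claimed identity to the inductivity formula for $\epsilon$-factors in degree zero, which is exactly property (2) in the list above (equation \eqref{60}). Since $\rho$ has even dimension and $[E:F]=l$ is odd, the induced representation ${\rm ind}_{\mathcal{W}_E}^{\mathcal{W}_F}(\rho)$ has dimension $l\cdot\dim\rho$, which is even; this parity bookkeeping is what will make the correction terms disappear. First I would recall that \eqref{60} gives
\[
\frac{\epsilon\big(X, {\rm ind}_{\mathcal{W}_E}^{\mathcal{W}_F}(\rho), \psi_F\big)}{\epsilon(X,\rho,\psi_E)} = \left(\frac{\epsilon\big(X, {\rm ind}_{\mathcal{W}_E}^{\mathcal{W}_F}(1_E), \psi_F\big)}{\epsilon(X, 1_E, \psi_F)}\right)^{\dim\rho},
\]
so it suffices to show that the base ratio $\lambda_{E/F}(\psi_F):=\epsilon\big(X, {\rm ind}_{\mathcal{W}_E}^{\mathcal{W}_F}(1_E), \psi_F\big)/\epsilon(X, 1_E, \psi_F)$, the Langlands constant of the extension $E/F$, satisfies $\lambda_{E/F}(\psi_F)^{\dim\rho}=1$. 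Because $\dim\rho$ is even it is enough to prove $\lambda_{E/F}(\psi_F)^2=1$, i.e. that the Langlands constant is $\pm 1$ for a cyclic extension of odd prime degree.

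The key step is therefore to compute $\lambda_{E/F}(\psi_F)$, or at least to pin down its square. Here I would use the decomposition ${\rm ind}_{\mathcal{W}_E}^{\mathcal{W}_F}(1_E)=\bigoplus_{\chi}\chi$, where $\chi$ runs over the $l$ characters of ${\rm Gal}(E/F)$ regarded (via class field theory) as characters of $\mathcal{W}_F$; the trivial character contributes $\epsilon(X,1_F,\psi_F)$, and $\epsilon(X,1_E,\psi_F)=\epsilon(X,1_F,\psi_F)$ as well since $1_E = {\rm res}\,1_F$ has the same $\epsilon$-factor only up to the conductor shift — more carefully, I would instead directly invoke that $\lambda_{E/F}(\psi_F) = \prod_{\chi\neq 1}\epsilon(X,\chi,\psi_F)/(\text{suitable normalisation})$ and that by property (3) (equation \eqref{61}) one has $\epsilon(X,\chi,\psi_F)\epsilon(q_F^{-1}X^{-1},\chi^{-1},\psi_F)=\chi(-1)$. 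Pairing each nontrivial $\chi$ with its inverse $\chi^{-1}$ (which is again nontrivial, and distinct from $\chi$ since $l$ is odd so $\chi^2=1$ forces $\chi=1$), the product $\prod_{\chi\neq 1}\epsilon$ telescopes in pairs, each pair contributing $\chi(-1)$ up to the substitution $X\mapsto q_F^{-1}X^{-1}$; and $\prod_{\chi\neq 1}\chi(-1)=\big(\prod_{\chi}\chi\big)(-1)$ which is trivial because the product of all characters of a group of odd order is trivial. Squaring kills the asymmetry in the $X$-variable. This shows $\lambda_{E/F}(\psi_F)^2=1$, hence $\lambda_{E/F}(\psi_F)^{\dim\rho}=1$, and the lemma follows.

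The main obstacle I anticipate is handling the $X$-dependence cleanly: the functional equation \eqref{61} relates $\epsilon(X,\chi,\psi_F)$ to $\epsilon(q_F^{-1}X^{-1},\chi^{-1},\psi_F)$ rather than to $\epsilon(X,\chi^{-1},\psi_F)$, so the naive pairing argument needs the extra input that for characters the $\epsilon$-factor is a monomial in $X$ (property (4), $\epsilon(X,\chi,\psi_F)=(q_F^{1/2}X)^{n(\chi,\psi_F)}\epsilon(\chi,\psi_F)$), together with the fact that $n(\chi,\psi_F)=n(\chi^{-1},\psi_F)$ since $\chi$ and $\chi^{-1}$ have the same conductor. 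With that in hand the $X$-powers on the two sides of the target identity match termwise and one is left comparing the constants $\epsilon(\chi,\psi_F)$, where the pairing-in-inverse-pairs argument applies. An alternative, perhaps cleaner, route avoiding the combinatorics altogether is to note that $\lambda_{E/F}(\psi_F)$ for cyclic $E/F$ equals $\lambda_{E/F}$ computed from a single generating character $\eta$ of order $l$ by $\lambda_{E/F}(\psi_F)=\prod_{j=1}^{l-1}\epsilon(X,\eta^j,\psi_F)/\epsilon(X,1_F,\psi_F)^{l-1}$ and to invoke the standard fact (Deligne--Henniart) that this quantity is a root of unity whose order divides $2$ when $l$ is odd; I would cite this if the direct computation above proves too fiddly to present in full, but I expect the self-contained pairing argument using \eqref{61} and property (4) to go through.
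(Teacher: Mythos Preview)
Your proposal is correct and follows the same overall strategy as the paper: use inductivity \eqref{60} to reduce to showing that the Langlands constant (the paper writes it $\mathcal{C}_{E/F}(\psi_F)$) has square $1$, and then invoke that $\dim\rho$ is even. Where you differ is only in the execution of the step $\mathcal{C}_{E/F}(\psi_F)^2=1$, and the paper's route neatly sidesteps the $X$-bookkeeping you anticipate. The paper first cites that $\mathcal{C}_{E/F}(\psi_F)$ is independent of $X$ (Bushnell--Henniart, Corollary~30.4), and then applies the functional equation \eqref{61} once to the whole representation ${\rm ind}_{\mathcal{W}_E}^{\mathcal{W}_F}(1_E)$ rather than character by character, obtaining $\mathcal{C}_{E/F}(\psi_F)^2=\xi_{E/F}(-1)$ with $\xi_{E/F}=\det\big({\rm ind}_{\mathcal{W}_E}^{\mathcal{W}_F}(1_E)\big)$. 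Self-duality of ${\rm ind}(1_E)$ makes $\xi_{E/F}$ quadratic, while the decomposition into characters of order dividing $l$ gives $\xi_{E/F}^l=1$; since $l\neq 2$ this forces $\xi_{E/F}=1$. Your pairing of $\chi$ with $\chi^{-1}$ and the observation that $\prod_{\chi}\chi(-1)=1$ is exactly this determinant computation unpacked, so the two arguments are equivalent; the paper's packaging just saves you the conductor-matching and monomial-in-$X$ manipulations.
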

\begin{proof}
Let
$\mathcal{C}_{E/F}(\psi_F) = \dfrac{\epsilon\big(X, {\rm
ind}_{\mathcal{W}_E}^{\mathcal{W}_F}(1_E),
\psi_F\big)}{\epsilon(X, 1_E, \psi_E)}$, where $1_E$ denotes the
trivial character of $\mathcal{W}_E$. Then
$\mathcal{C}_{E/F}(\psi_F)$ is independent of X (see \cite[Corollary
30.4, Chapter 7]{MR2234120}). Using the equality (\ref{60}), we get
\begin{center}
$\dfrac{\epsilon\big(X, {\rm
ind}_{\mathcal{W}_E}^{\mathcal{W}_F}(\rho),
\psi_F\big)}{\epsilon(X, \rho, \psi_E)}
= (\mathcal{C}_{E/F}\big(\psi_F)\big)$$^{\rm dim\rho}$.
\end{center}
In view of the functional equation (\ref{61}), we have
\begin{center}
$\mathcal{C}_{E/F}(\psi_F)^2 = \xi_{E/F}(-1)$,
\end{center}
where $\xi_{E/F} = {\rm det}\big({\rm
ind}_{\mathcal{W}_E}^{\mathcal{W}_F}(1_E)\big)$, a quadratic
character of $\mathcal{W}_F$. Note that
$\xi_{E/F}$ is a character of the quotient group
$\mathcal{W}_F/\mathcal{W}_E$--which is a cyclic group of order
$l$, and this implies that $\xi_{E/F}^l = 1$. Since
$l$ is odd, we get that $\xi_{E/F} =
1_F$, the trivial character of $\mathcal{W}_F$. Hence the lemma.
\end{proof} 
\subsection{Local constants of \texorpdfstring{$p$}{}-adic
representations}\label{int_1}
Following the notations as in Section (\ref{rr}), we now define
the $L$-factors and $\gamma$-factors for irreducible
$R$-representations of $G_n(K)$. For details, see \cite{MR3595906}. 
Let $\pi$ be an $R$-representation of Whittaker type of
$G_n(K)$ and let $\pi'$ be an
$R$-representation of Whittaker type of $G_{n-1}(K)$. Let $W \in
\mathbb{W}(\pi,\psi_K)$ and $W' \in
\mathbb{W}(\pi',\psi_K^{-1})$. The function $W
\begin{pmatrix}
	g & 0\\
	0 & 1
\end{pmatrix}
W'(g)$ is compactly supported on $Y_K^r$ \cite[Proposition 3.3]{MR3595906}.
Then the following integral
$$ 
c^K_r(W,W') = \int_{Y_K^r} W
\begin{pmatrix}
	g & 0\\
	0 & 1
\end{pmatrix}
W'(g) \,dg, $$ 
is well defined for all $r \in \mathbb{Z}$, and vanishes 
for $r << 0$. In this paper, we deal with base change where two different
$p$-adic fields are involved. So to avoid confusion, we use the
notation $c^K_r(W,W')$ instead of the notation
$c_r(W,W')$ used in \cite[Proposition 3.3]{MR3595906} for these
integrals on $Y_K^r$. Now consider the functions
$\widetilde{W}$ and $\widetilde{W'}$, defined as
$$ \widetilde{W}(x) = W\big(w_n(x^t)^{-1}\big) $$ 
and 
$$ \widetilde{W'}(g) = W'\big(w_{n-1}(g^t)^{-1}\big), $$
for all $x \in G_n(K)$, $g \in
G_{n-1}(K)$. Then making a change of variables, we have the following relation:
\begin{equation}\label{int_2}
c^K_r(\widetilde{W},\widetilde{W'}) =
c^K_{-r}\big(\pi(w_n)W,\pi'(w_{n-1})W'\big).
\end{equation}
Let $I(X,W,W')$ be the following power series:
\begin{equation}\label{PS}
I(X,W,W') = \sum_{r \in \mathbb{Z}}c^K_k(W,W') 
q_K^{r/2} X^r \in R((X)) . 
\end{equation}
Note that $I(X,W,W')$ is a rational function in $X$ (see
\cite[Theorem 3.5]{MR3595906}).
	
\subsubsection{$L$ -factors}
Let $\pi$ and $\pi'$ be two $R$-representations of Whittaker type of
$G_n(K)$ and $G_{n-1}(K)$ respectively. Then the $R$-submodule spanned
by $I(X,W,W')$ as $W$ varies in $\mathbb{W}(\pi,\psi_K)$ and $W'$
varies in $\mathbb{W}(\pi',\psi_K^{-1})$, is a fractional ideal of
$R[X,X^{-1}]$ and it has a unique generator which is an Euler factor
denoted by $L(X,\pi,\pi')$. The generator $L(X,\pi,\pi')$ called the
$L$-factor associated to $\pi$, $\pi'$ and $\psi_K$.
\begin{remark}
If $\pi$ and $\pi'$ are $l$-adic representations of Whittaker type
of $G_n(K)$ and $G_{n-1}(K)$ respectively, then
$1/L(X,\pi,\pi') \in \mathbb{\overline{Z}}_l[X]$.
\end{remark}
We conclude this section with a theorem \cite[Theorem 4.3,]{MR3595906}
which describes $L$-factors of cuspidal representations.
\begin{theorem}\label{k}
\rm Let $\pi_1$ and $\pi_2$ be two cuspidal
$R$-representations of $G_n(K)$ and
$G_m(K)$ respectively. Then
$L(X,\pi_1,\pi_2)$ is equal to 1, except in the following case :
$\pi_1$ is banal in the sense of \cite{MR3178433} and $\pi_2 \simeq
\chi \pi_1^\vee$ for some unramified character $\chi$ of $K^\times$.
\end{theorem}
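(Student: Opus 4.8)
The plan is to follow the strategy of Jacquet--Piatetski-Shapiro--Shalika adapted to the modular setting, distinguishing the $l$-adic and $l$-modular cases. First I would recall that, by definition, $L(X,\pi_1,\pi_2)$ is the unique Euler-factor generator of the $R[X,X^{-1}]$-submodule of $R(X)$ spanned by the zeta integrals $I(X,W,W')$ with $W\in\mathbb{W}(\pi_1,\psi_K)$, $W'\in\mathbb{W}(\pi_2,\psi_K^{-1})$; it therefore suffices to show that this fractional ideal equals $R[X,X^{-1}]$ outside the exceptional case, i.e. that some $I(X,W,W')$ is a nonzero constant (equivalently a unit in $R[X,X^{-1}]$), and that in the exceptional case one gets exactly the prescribed pole. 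For the $l$-adic case one can invoke the classical computation of Jacquet--Piatetski-Shapiro--Shalika (\cite{MR620708}): since $\pi_1$ is cuspidal, its Kirillov model on $P_n(K)$ is all of $\operatorname{ind}_{N_n(K)}^{P_n(K)}(\Theta_K)$, so choosing $W$ supported near the identity in $P_n(K)$ and $W'$ appropriately, the integrals $c^K_r(W,W')$ vanish for all but one value of $r$, forcing $I(X,W,W')$ to be a monomial, hence a unit; thus $L(X,\pi_1,\pi_2)=1$ unless the intertwining forces a pole, which by cuspidality of $\pi_2$ happens precisely when $\pi_2\simeq\chi\pi_1^\vee$ with $\chi$ unramified (here "banal" is automatic for $R=\overline{\mathbb{Q}}_l$ since $e(\sigma)=+\infty$).

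For the general $R$ (in particular $R=\overline{\mathbb{F}}_l$) I would reduce to the $l$-adic case via reduction mod $l$, using that cuspidal $l$-modular representations lift. Concretely: if $\pi_1,\pi_2$ are $l$-modular cuspidal, lift each to an integral $l$-adic cuspidal $\widetilde{\pi}_i$ with $r_l(\widetilde{\pi}_i)=\pi_i$, and use the compatibility of Whittaker lattices with reduction mod $l$ (the lattice $\mathbb{W}^0(\widetilde\pi_i,\psi_K)$ reduces to $\mathbb{W}(\pi_i,\overline\psi_K)$, by \cite[Theorem 2]{MR2058628}) together with the compatibility of the zeta integrals $I(X,W,W')$ with reduction — one checks that $c^K_r$ of integral Whittaker functions lies in $\overline{\mathbb{Z}}_l$ and reduces to the $l$-modular $c^K_r$. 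Then $r_l$ of a monomial is a monomial, so the $l$-modular fractional ideal still contains a unit, giving $L(X,\pi_1,\pi_2)=1$, unless the pole of the lifted $L$-factor survives reduction mod $l$. The pole survives exactly when $\widetilde\pi_2\simeq\widetilde\chi\,\widetilde\pi_1^\vee$ reduces to $\pi_2\simeq\chi\pi_1^\vee$ and, crucially, the geometric series $\sum_r X^r$ producing the pole does not get truncated mod $l$ — this is precisely where the condition "$\pi_1$ banal" enters, since banality (in the sense of \cite{MR3178433}) is exactly the condition ensuring $q_K$ has the right order mod $l$ so that the $l$-modular $L$-factor $1/(1-q_K^{?}X^{?})$ genuinely has a pole rather than collapsing.

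The main obstacle is the last point: in the modular case the dichotomy "the ideal is everything, or there is one prescribed pole" is not purely formal, because the subtle phenomena of $l$-modular harmonic analysis (the failure of the integrals to be as free as over $\overline{\mathbb{Q}}_l$, and the possible vanishing or non-vanishing of the relevant $l$-modular period integrals at $X = $ a root of unity) must be controlled. I would handle this by citing the analogue of \cite[Lemma 3.5]{MR620708} / \cite[6.2.1]{MR1981032} in the form already available in \cite{MR3595906} (this is exactly the vanishing statement the introduction attributes to Matringe--Kurinczuk), which pins down the generator of the fractional ideal, and by checking directly that when $\pi_2 \simeq \chi\pi_1^\vee$ with $\chi$ unramified and $\pi_1$ banal the relevant sum over $r$ of constant terms is a genuine geometric series $c\sum_{r\ge r_0} (q_K^{a}X)^{r}$ with $q_K^{a}$ of infinite (or at least the expected) multiplicative order in $R^\times$ — equivalently that banality prevents the pole from cancelling. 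Outside that case, cuspidality of $\pi_2$ forces all lower-degree obstructions to vanish (again by the JPSS-type lemma), so the generator is $1$ and $L(X,\pi_1,\pi_2)=1$.
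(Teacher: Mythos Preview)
The paper does not give its own proof of this theorem: it is quoted from Kurinczuk--Matringe \cite[Theorem 4.3]{MR3595906}, and the surrounding text only uses the consequence that $L(X,\pi_1,\pi_2)=1$ whenever $m\neq n$. So there is no in-paper argument to compare your proposal against.

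On its own merits, your outline is in the spirit of the actual proof in \cite{MR3595906}, but it has a structural gap. Your ``monomial'' argument (choose $W$ with small support in $P_n(K)$ via the Kirillov model, so only one $c^K_r$ survives) works uniformly and shows the fractional ideal contains a unit whenever $m<n$; this already forces $L=1$ in all such cases, which is why the exceptional case can only occur for $m=n$. But for $m=n$ the Rankin--Selberg integral carries an additional Schwartz function $\Phi$ on $K^n$, and the monomial trick no longer closes the argument: after localising $W,W'$ one is still left with a Tate-type integral in $\Phi$ over the center, and that residual integral is precisely the source of the possible pole. You never separate the two regimes, so the clause ``unless the intertwining forces a pole'' is doing unexplained work; as written, your first paragraph would prove $L=1$ even in the exceptional case.

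Your treatment of the banal/non-banal dichotomy in the $l$-modular case is also too loose. The intuition that the pole ``survives'' or ``collapses'' according to the multiplicative order of $q_K$ in $\overline{\mathbb{F}}_l^\times$ is correct, but making it precise requires the analysis of the $l$-modular Tate integral and of the derivatives of $\pi_1$ carried out in \cite{MR3595906}; lifting to characteristic zero and reducing only gives the divisibility $L(X,\pi_1,\pi_2)\mid r_l\big(L(X,\widetilde\pi_1,\widetilde\pi_2)\big)$, not equality. Finally, your proposed fix in the last paragraph (``cite the analogue \ldots already available in \cite{MR3595906}'') is circular, since that is exactly the reference from which the theorem is taken.
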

In the proof of Theorem(\ref{intro_n_thm}), we only consider the case
when $m=n-1$, and by the above theorem the
$L$-factor
$L(X,\pi_1,\pi_2)$ associated with the cuspidal
$R$-representations $\pi_1$ and $\pi_2$ is equal to 1.
	
\subsubsection{Functional Equations and Local
\texorpdfstring{$\gamma$}{}-factors}\label{w}
Let $\pi$ and $\pi'$ be two $R$-representations of
Whittaker type of $G_n(K)$ and $G_{n-1}(K)$
respectively. Then there is an invertible element
$\epsilon(X,\pi,\pi',\psi_K)$ in $R[X,X^{-1}]$ such that
for all $W\in\mathbb{W}(\pi,\psi_K)$,
$W' \in \mathbb{W}(\pi',\psi_K^{-1})$, we have the
following functional equation :
$$ \dfrac{I(q_K^{-1}X^{-1},\widetilde{W},
\widetilde{W'})}{L(q_K^{-1}X^{-1}, \widetilde{\pi},
\widetilde{\pi'})}=\omega_{\pi'}(-1)^{n-2} \epsilon(X,
\pi,\pi',\psi_K)\dfrac{I(X,W,W')}{L(X,\pi,\pi')}, $$ 
where $\widetilde{W}$ is defined as in (\ref{int_1}) and
$\omega_{\pi'}$ denotes the central character of the representation
$\pi'$. We call $\epsilon(X,\pi,\pi',\psi_K)$ the local
$\epsilon$-factor associated to $\pi$, $\pi'$ and $\psi_K$. Moreover,
if $\pi$ and $\pi'$ be $l$-adic representations of Whittaker type of
$G_n(K)$ and $G_{n-1}(K)$ respectively, then the factor
$\epsilon(X,\pi,\pi',\psi_K)$ is of the form $cX^k$ for a unit
$c \in \mathbb{\overline{Z}}_l$. In particular, there exists an
integer $n(\pi,\pi',\psi_K)$ such that
\begin{equation}\label{degree}
\epsilon(X,\pi,\pi',\psi_K)=(q_K^{\frac{1}{2}}X)
^{n(\pi,\pi',\psi_K)}
\epsilon(\pi,\pi',\psi_K).
\end{equation} 
Now the local $\gamma$-factor associated with
$\pi$, $\pi'$ and $\psi$ is defined as:
\begin{center}
$\gamma(X,\pi,\pi',\psi_K) = \epsilon(X,\pi,\pi',\psi_K)
\dfrac{L(q_K^{-1}X^{-1}, \widetilde{\pi},
\widetilde{\pi'})}{L(X,\pi,\pi')}$.
\end{center}
	
\subsubsection{Compatibility with reduction modulo $l$}\label{prop}
Let $\tau$ and $\tau'$ be two $l$-modular representations of Whittaker
type of $G_n(K)$ and $G_{n-1}(K)$ respectively.  Let $\pi$ and $\pi'$
be the respective Whittaker lifts of $\tau$ and $\tau'$.  Then
$$ L(X,\tau,\tau') | r_l\big(L(X,\pi,\pi')\big) $$
and 
$$ 
r_l\big((\gamma(X,\pi,\pi',\psi_K)\big)
= \gamma(X,\tau,\tau',\overline{\psi}_K).
$$ 
For details, see \cite[Section 3.3]{MR3595906}. 
	
\subsubsection{Generic part of mod-\texorpdfstring{$l$}{} reduction}
Let $\pi$ be an integral $l$-adic generic representation of
$G_n(K)$. The mod-$l$-reduction of $\pi$, denoted by $r_l(\pi)$, has a
unique generic component and it is denoted by $J_l(\pi)$ (see
\cite[Section 1.8.4]{MR1821157}). Let $\sigma$ be an $l$-adic generic
representation of $G_{n-1}(K)$. Now, the functional equation for the
pair $(J_l(\pi), J_l(\sigma))$ gives
$$
I(q_K^{-1}X^{-1},\widetilde{W},\widetilde{W'})=
\varpi_{\sigma}(-1)^{n-2}\gamma(X,J_l(\pi),J_l(\sigma),
\overline{\psi}_K)I(X,W,W'),
$$
for all $W\in\mathbb{W}(J_l(\pi),\overline{\psi}_K)$ and
$W'\in\mathbb{W} (J_l(\sigma),\overline{\psi}_K^{-1})$. Let us
consider the following commutative diagram:
$$
\xymatrix{
\mathbb{W}^0(\pi,\psi_K) \ar[dd]_{{\rm Res}_{P_n(K)}}
\ar[rr]^{\Lambda_\pi}  &&  {\rm Ind}_{N_n(K)}^{G_n(K)}
\overline{\Theta}_K \ar[dd]^{{\rm Res}_{P_n(K)}} \\\\
\mathbb{K}^0(\pi,\psi_K) \ar[rr]_{\lambda_\pi} && {\rm
Ind}_{N_n(K)}^{P_n(K)}\overline{\Theta}_K }
$$
Note that the restriction to $P_n(K)$ map on $W^0(\pi, \psi_K)$ is an
isomorphism.  Here $\Lambda_\pi$ and $\lambda_\pi$ are the pointwise
mod-$l$ reduction maps. Since $\mathcal{K}^0(\psi_K)$ is contained in
$\mathbb{K}^0(\pi,\psi_K)$ and $\lambda_\pi$ maps
$\mathcal{K}^0(\psi_K)$ onto $\mathcal{K}(\overline{\psi}_K)$, the
$P_n(K)$-equivariant map $\lambda_\pi$ is non-zero. It then follows
from the commutativity of the above diagram that $\Lambda_\pi$ is
non-zero. Since $J_l(\pi)$ is the unique generic subquotient of
$r_l(\pi)$, the image of $\Lambda_\pi$ contains the Whittaker space
$\mathbb{W}(J_l(\pi),\overline{\psi}_K)$. Similarly, the image of
$\Lambda_\sigma$ contains $\mathbb{W}(J(\sigma),\overline{\psi}_K)$.
Let $U$ (resp. $U'$) be an element of $\mathbb{W}^0(\pi,\psi_K)$
(resp. $\mathbb{W}^0(\sigma,\psi_K)$) such that $\Lambda_\pi(U)=W$
(resp.  $\Lambda_\tau(U')=W'$). From the functional equation for the
pair $(\pi,\sigma)$, we get the following relation
$$
I(q_K^{-1}X^{-1},\widetilde{U},\widetilde{U'}) =
\varpi_\sigma(-1)^{n-2}\gamma(X,\pi,\sigma,\psi_K)I(X,U,U').
$$
After reducing the above equality modulo-$l$, we have
$$
I(q_K^{-1}X^{-1},\widetilde{W},\widetilde{W'}) = \varpi_\sigma(-1)^{n-2}
r_l(\gamma(X,\pi,\sigma,\psi_K)) I(X,W,W'),
$$
Thus, we get that
\begin{equation}\label{gamma_factor_generic_part}
  r_l(\gamma(X, \pi, \sigma, \psi_K)) =
  \gamma(X, J_l(\pi), J_l(\sigma),\overline{\psi}_K).
\end{equation}
	
\section{Local Langlands Correspondence}\label{LLC_section}
\subsection{The \texorpdfstring{$l$}{}-adic local Langlands
correspondence}\label{LLC}
In this subsection, we recall the $l$-adic local Langlands
correspondence. Keep the notation as in section (\ref{x}). Let
$\psi_K$ be a non-trivial additive character of $K$. Recall that
local Langlands correspondence over $\overline{\mathbb{Q}}_l$ is
the bijection
\begin{center}
$\Pi_K:{\rm Irr}\big(GL_n(K),\mathbb{\overline{Q}}_l\big)
\longrightarrow \mathcal{G}_{ss}^n(K)$
\end{center}
such that
$$ \gamma(X,\sigma\times\sigma',\psi_K) =
\gamma(X,\Pi_K(\sigma)\otimes\Pi_K(\sigma'),\psi_K) $$
and
$$ L(X,\sigma\times\sigma') = L(X,\Pi_K(\sigma)\otimes\Pi_K(\sigma')),
$$
for all $\sigma\in{\rm Irr}(G_n(K),\mathbb{\overline{Q}}_l)$,
$\sigma'\in{\rm Irr}(G_m(K),\mathbb{\overline{Q}}_l)$. Moreover, the
set of all cuspidal $l$-adic representations of $GL_n(K)$ is mapped
onto the set $n$-dimensional irreducible $l$-adic representations of
the Weyl group $\mathcal{W}_K$ via the bijection $\Pi_K$ (see
\cite{harris_taylor}, \cite{henniart_une_preuve} or
\cite{scholze_llc}). Note that the classical local Langlands
correspondence is a bijection between
${\rm Irr}({\rm GL}_n(K), \mathbb{C})$ and the isomorphism classes of
$n$-dimensional, complex semisimple Weil--Deligne representations. To
get a correspondence over $\overline{\mathbb{Q}}_l$, one twists the
original correspondence by the character $\nu^{(1-n)/2}$. For details
see \cite[Conjecture 4.4, Section 4.2]{clozel_motifs}, \cite[Section
7]{henniart_bordeaux} and for $n=2$ see \cite[Theorem
35.1]{MR2234120}.
	
\subsection{Local base Change for the extension
\texorpdfstring{$E/F$}{}}\label{base_change}
Now we recall local base change for a cyclic extension of a $p$-adic
field. The base change operation on irreducible smooth representations
of ${\rm GL}_n(F)$ over complex vector spaces is characterised by
certain character identities (see \cite[Chapter 3]{MR1007299}). Let us
recall the relation between $l$-adic local Langlands correspondence
and local base change for ${\rm GL}_n$. Let $\pi_F$ be an $l$-adic
irreducible smooth representation of ${\rm GL}_n(F)$. Let
$(\rho_F, U)$ be a  semisimple Weil-Deligne representation such that
$\Pi_F(\pi_F) = \rho_F$, where $\Pi_F$ is the $l$-adic local Langlands
correspondence as described in the previous section. Let $\pi_E$ be
the $l$-adic irreducible representation of $GL_n(E)$ such that
\begin{center}
${\rm res}_{\mathcal{W}_E}\big(\Pi_F(\pi_F)\big) \simeq
\Pi_E(\pi_E)$.
\end{center}
The representation $\pi_E$ is the base change of $\pi_F$. Note that in
this case $\pi_E\simeq\pi_E^\gamma$, for all $\gamma \in\Gamma$.
\subsubsection{Base change for $L(\Delta)$}\label{bc_gen}
Let $k$ be a positive integer. Let
$\Delta =\{\tau_F,\tau_F\nu_F,...,\tau_F\nu_F^{k-1}\}$ be a segment,
where $\tau_F$ is an $l$-adic cuspidal representation of
$G_m(F)$. Consider the generic representation $\mathcal{L}(\Delta)$ of
$G_{km}(F)$. Then
$$ \Pi_F\big(\mathcal{L}(\Delta)\big)=\Pi_F(\tau_F)
\otimes {\rm Sp}_F(k), $$
where ${\rm Sp}_F(k)$ is the semisimple Weil-Deligne representation of
$\mathcal{W}_F$, defined as in \cite[Section 31, Example
31.1]{MR2234120}. If $l$ does not divide $m$, then there exists a
cuspidal representation $\tau_E$ of $G_m(E)$ such that $\tau_E$ is a
base change of $\tau_F$ that is,
$$ {\rm res}_{\mathcal{W}_E}\big(\Pi_F(\tau_F)\big) = \Pi_E(\tau_E). $$
Then we have
$$ {\rm res}_{\mathcal{W}_E}
\big(\Pi_F(\mathcal{L}(\Delta))\big)= \Pi_E(\tau_E) \otimes {\rm
Sp}_E(k)=\Pi_E\big(\mathcal{L}(D)\big), 
$$ 
where $D$ is the segment
$\{\tau_E,\tau_E\nu_E,...,\tau_E\nu_E^{k-1}\}$. Hence, it follows that
the generic representation $\mathcal{L}(D)$ of $G_{km}(E)$ is a base
change of $\mathcal{L}(\Delta)$. Next, we prove a lemma
about base change lifting and integrality.

\begin{lemma}\label{bc_integral}
  Let $\pi_F$ be an irreducible $l$-adic representation of $G_n(F)$,
  and let $\pi_E$ be the base change lifting of $\pi_F$ to
  $G_n(E)$. Then $\pi_F$ is integral if and only if the base change
  lifting $\pi_E$ is integral.
\end{lemma}
\begin{proof}
  Let $\pi$ be an irreducible $l$-adic smooth representation of
  $G_n(K)$. Let ${\rm scs}(\pi)$ be the supercuspidal support of $\pi$
  (see \cite[III.3.]{Vigneras_Induced} for the definition). The
  representation $\pi$ is integral if and only if ${\rm scs}(\pi)$ is
  integral (see \cite[Section 1.4]{MR1821157} and for general
  reductive groups see \cite[Corollary 1.6]{dat2024finiteness} for a
  reference).  The representation ${\rm scs}(\pi)$ is integral if and
  only if the central character of ${\rm scs}(\pi)$ is integral.
  
  Assume that $\pi_F$ is integral. Let $(\rho_F,U)$ be the $l$-adic,
  semisimple Weil-Deligne representation of $\mathcal{W}_F$ associated
  with $\pi_F$ under the local Langlands correspondence (LLC)
  $\Pi_F$. Under LLC, the representation ${\rm scs}(\pi_F)$
  corresponds to the $\mathcal{W}_F$-representation $\rho_F$.  Since
  the determinant character of each irreducible component of $\rho_F$
  is integral, we get that $\rho_F$ is integral. This implies that the
  restriction ${\rm res}_{\mathcal{W}_E}(\rho_F)$ is integral. Under
  LLC, the supercuspidal support of $\pi_E$ corresponds to the
  restriction ${\rm res}_{\mathcal{W}_E}(\rho_F)$.  Thus, the
  supercuspidal support of $\pi_E$ is integral and we get that $\pi_E$
  is integral.

  Conversely assume that $\pi_E$ is integral. Let
  $(\rho_E,U_E)$ be the $l$-adic, semisimple Weil-Deligne
  representation of $\mathcal{W}_E$ associated with $\pi_E$ under
  LLC. Since the supercuspidal support of $\pi_E$ is integral, the
  $\mathcal{W}_E$-representation $\rho_E$ (which is
  ${\rm res}_{\mathcal{W}_E}(\rho_F)$) is integral. Let $\mathcal{L}_E$
  be a $\mathcal{W}_E$-stable lattice in $\rho_E$. Then,
$$ \sum_{x\in \mathcal{W}_F/\mathcal{W}_E} \rho_F(x)\mathcal{L}_E. $$ 
is a $\mathcal{W}_F$-stable lattice in $\rho_F$. Thus, the
representation $\rho_F$ is also integral, which implies that the
supercuspidal support ${\rm scs}(\pi_F)$ is integral. Hence, $\pi_F$
is integral.
\end{proof}
	
\section{Tate Cohomology}\label{Tate_cohomology}
In this section, we recall Tate cohomology and some useful results on
$\Gamma$-equivariant $l$-sheaves of $\Lambda$-modules on an $l$-space $X$
equipped with an action of $\Gamma$. We refer to \cite[Section
3]{MR3432583} for details.
\subsection{}
Fix a generator $\gamma$ of $\Gamma$. Let $M$ be a
$\Lambda[\Gamma]$-module, and let $T_\gamma$ be the
automorphism of $M$ defined by
\begin{center}
$T_\gamma(m)=\gamma.m$, for $\gamma\in\Gamma, m\in M$.
\end{center}
Let
$N_\gamma={\rm id}+T_\gamma+T_{\gamma^2}+....+T_{\gamma^{l-1}}$
be the norm operator. The Tate cohomology groups $\widehat{H}^0(M)$
and $\widehat{H}^1(M)$ are defined as :
\begin{center}
$\widehat{H}^0(M) = \dfrac{{\rm ker}({\rm id}-T_\gamma)}{{\rm
Im}(N_\gamma)}$, $\widehat{H}^1(M) = \dfrac{{\rm
ker}(N_\gamma)}{{\rm Im}({\rm id}-T_\gamma)}$.
\end{center}
\subsection{Tate Cohomology of sheaves on
\texorpdfstring{$l$}{}-spaces}
Let $X$ be an $l$-space equipped with an action of a finite group
$\langle\gamma\rangle$ of order $l$. Let $\mathcal{F}$
be an $l$-sheaf of $\Lambda$ modules 
on $X$. Write $\Gamma_c(X;\mathcal{F})$ for the space of compactly
supported sections of $\mathcal{F}$. In particular, if
$\mathcal{F}$ is the constant sheaf with stalk
$\Lambda$,
then $\Gamma_c(X;\mathcal{F}) =
C_c^\infty(X, \Lambda)$. The assignment
$\mathcal{F} \mapsto \Gamma_c(X;\mathcal{F})$ is a
covariant exact functor. If $\mathcal{F}$ is
$\gamma$-equivariant, then $\gamma$ can be regarded as a
map of sheaves
$\mathcal{F}|_{X^\gamma}\rightarrow
\mathcal{F}|_{X^\gamma}$ and the Tate cohomology is
defined as 
\begin{center}
$\widehat{H}^0(\mathcal{F}|_{X^\gamma}):=
{\rm ker}(1-\gamma)/{\rm Im}(N)$,\\
$\widehat{H}^1(\mathcal{F}|_{X^\gamma}):=
{\rm ker}(N)/{\rm Im}(1-\gamma)$.
\end{center}
A compactly supported section of
$\mathcal{F}$ can be restricted to a compactly supported section of
$\mathcal{F}|_{X^\gamma}$. The following result is often useful in
calculating Tate cohomology groups.
\begin{proposition}[Treumann-Venkatesh, 
\cite{MR3432583}]\label{tr}
The restriction map induces an isomorphism of the following spaces:
\begin{center}
$\widehat{H}^i(\Gamma_c(X;\mathcal{F}))\longrightarrow
\Gamma_c(X^\gamma;\widehat{H}^i(\mathcal{F}))$
for $i=0,1$.
\end{center}
\end{proposition}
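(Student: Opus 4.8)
The plan is to reduce the statement to a local, stalk-by-stalk computation, exploiting the fact that $X^\gamma$ is an open-and-closed condition in a totally disconnected space and that $\Gamma_c(X;-)$ is exact. First I would filter a compactly supported section of $\mathcal{F}$ by its support: since $X$ is an $\ell$-space (locally profinite, totally disconnected) carrying an action of the cyclic group $\langle\gamma\rangle$ of order $\ell$, every compact open subset can be refined to one that is $\gamma$-stable, and the orbits of $\langle\gamma\rangle$ on $X$ have size either $1$ (fixed points, forming the closed subspace $X^\gamma$) or $\ell$. Correspondingly one gets a short exact sequence of $\mathbb{Z}_\ell[\Gamma]$-modules
\begin{equation*}
0 \longrightarrow \Gamma_c(X \setminus X^\gamma;\mathcal{F}) \longrightarrow \Gamma_c(X;\mathcal{F}) \longrightarrow \Gamma_c(X^\gamma;\mathcal{F}|_{X^\gamma}) \longrightarrow 0,
\end{equation*}
using that $X^\gamma$ is closed and $X\setminus X^\gamma$ is open, together with exactness of the sections functor.

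Next I would compute the Tate cohomology of each outer term. On $X \setminus X^\gamma$ the group $\langle\gamma\rangle$ acts freely, so $\Gamma_c(X\setminus X^\gamma;\mathcal{F})$ is an induced (equivalently, since $\Gamma$ is finite cyclic, a free) $\mathbb{Z}_\ell[\Gamma]$-module: picking a set of representatives for the $\langle\gamma\rangle$-orbits, $\Gamma_c(X\setminus X^\gamma;\mathcal{F}) \cong \mathrm{Ind}_{1}^{\Gamma}\big(\Gamma_c(Y;\mathcal{F})\big)$ for a suitable transversal space $Y$. Induced modules are cohomologically trivial, so $\widehat{H}^i\big(\Gamma_c(X\setminus X^\gamma;\mathcal{F})\big) = 0$ for $i = 0,1$. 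For the quotient term, on $X^\gamma$ the action of $\gamma$ on the base is trivial, and the $\gamma$-equivariant structure is precisely the sheaf map $\mathcal{F}|_{X^\gamma}\to\mathcal{F}|_{X^\gamma}$; since $\Gamma_c(X^\gamma;-)$ is exact it commutes with taking kernels, images, and cokernels of the two operators $1-\gamma$ and $N$ defining Tate cohomology, giving a natural isomorphism $\widehat{H}^i\big(\Gamma_c(X^\gamma;\mathcal{F}|_{X^\gamma})\big) \cong \Gamma_c\big(X^\gamma;\widehat{H}^i(\mathcal{F}|_{X^\gamma})\big)$.

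Finally I would feed the short exact sequence into the long exact sequence of Tate cohomology of $\mathbb{Z}_\ell[\Gamma]$-modules (for a finite cyclic group, $\widehat{H}^\bullet$ is $2$-periodic and yields a six-term exact hexagon). Since the first term has vanishing $\widehat H^0$ and $\widehat H^1$, the connecting maps force $\widehat{H}^i\big(\Gamma_c(X;\mathcal{F})\big) \xrightarrow{\ \sim\ } \widehat{H}^i\big(\Gamma_c(X^\gamma;\mathcal{F}|_{X^\gamma})\big)$ for $i=0,1$, and this isomorphism is exactly the restriction map by construction of the exact sequence. Composing with the isomorphism of the previous step gives the claim. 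The main obstacle I anticipate is the bookkeeping in the first step: one must check carefully that compactly supported sections genuinely split according to $\gamma$-stable compact opens — i.e. that the restriction $\Gamma_c(X;\mathcal{F}) \to \Gamma_c(X^\gamma;\mathcal{F}|_{X^\gamma})$ is surjective with the expected kernel — which rests on the local profiniteness of $X$ allowing any section of $\mathcal{F}|_{X^\gamma}$ to be extended to a $\gamma$-stable compact open neighbourhood and then symmetrized; everything after that is formal homological algebra for the cyclic group $\Gamma$.
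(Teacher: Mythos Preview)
The paper does not give its own proof of this proposition: it is stated with attribution to Treumann--Venkatesh \cite{MR3432583} and then used as a black box. Your argument is correct and is essentially the standard one (and the one in \cite{MR3432583}): split off the free locus via the open/closed exact sequence, kill its Tate cohomology because sections over a free $\Gamma$-space form an induced module, and on the fixed locus commute $\Gamma_c$ with $\widehat{H}^i$ by exactness. The same ingredients reappear later in the paper in the proof of Proposition~\ref{1}, where the authors write down exactly the short exact sequence you use (equation~(\ref{s})) and invoke the fundamental-domain argument to get vanishing over $X_E\setminus X_F$. One small wording issue: you first call $X^\gamma$ ``open-and-closed'', which is not true in general; you correct yourself two lines later, but you should drop the first phrase.
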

\subsubsection{}\label{section_kirrilov_rep}
The above proposition is very useful to compute the Tate cohomology
groups 
of compactly induced representations. For instance, the following argument is 
used at many places in the paper. Let $E/F$ be a 
Galois extension of degree $l$ with Galois group $\Gamma$.
Let $\psi_F:F\rightarrow 
\overline{\mathbb{Z}_l}^\times $ be an additive character
and let $\psi_E$ be the character 
$\psi_F\circ {\rm Tr}_{E/F}$,
where ${\rm Tr}_{E/F}$ is the trace function. Let $\Theta_E$
and $\Theta_F$ be the non-degenerate characters of $N_n(E)$ 
and $N_n(F)$ associated with $\psi_F$ and $\psi_E$ 
respectively (see Subsection \ref{rr}). We will use the notations 
$\overline{\psi}_E$, 
$\overline{\psi}_F$, $\overline{\Theta}_E$, $\overline{\Theta}_F$
for the respective mod-$l$ reductions. 
Recall the notation $\mathcal{K}(\psi_K)$ for  the
compact induction $\ind_{N_n(K)}^{P_n(K)}\psi_K$. Note that 
the Galois group $\Gamma$ acts on the representation 
$\mathcal{K}(\overline{\psi}_E)$, by setting 
$$(\gamma f)(x)=f(\gamma^{-1}(x)),\ \gamma\in \Gamma,\  x\in P_n(E),\
f\in \mathcal{K}(\overline{\psi}_E).$$
The restriction to $P_n(F)$ map:
$$\mathcal{K}(\overline{\psi}_E)\rightarrow 
\mathcal{K}(\overline{\psi}_F^l),\ f\mapsto {\rm res}_{P_n(F)} f$$
factorizes through 
\begin{equation}\label{kirillov_mod_tate_commutes}
    \widehat{H}^0(\mathcal{K}(\overline{\psi}_E))
    \rightarrow \mathcal{K}(\overline{\psi}_F^l).
\end{equation}
We set $Y_K=P_n(K)/N_n(K)$. Note that the pointed set
$H^1(\Gamma, N_n(E))$ is trivial and hence $Y_E^\Gamma=Y_F$.  Applying
Proposition \ref{tr} for the case where $X=Y_E$ and $\mathcal{F}$
equals the sheaf associated with the induced representation
$\ind_{N_n(E)}^{P_n(E)}\overline{\Theta}_E$, we get that the map
\eqref{kirillov_mod_tate_commutes} is an isomorphism.  Proposition
\ref{tr} also shows that $\widehat{H}^1(\mathcal{K}^0(\psi_E))$ is
trivial. Here, $\mathcal{K}^0(\psi_E)$ is the space of
$\overline{\mathbb{Z}}_l$-valued functions in the $l$-adic
representation $\ind_{N_n(E)}^{P_n(E)}\psi_E$.

\subsection{Comparison of integrals of smooth functions}\label{ur}
The group $\Gamma =\langle\gamma \rangle$ acts on the space
$X_E=G_{n-1}(E)/N_{n-1}(E)$ and hence its action on the space
$C_c^\infty(X_E, \mathbb{\overline{F}}_l)$ is given by the following equality: 
\begin{center}
$(\gamma.\phi)(x):=\phi(\gamma^{-1}x)$, for all $x\in X_E$, and 
$\phi\in C_c^\infty(X_E,\mathbb{\overline{F}}_l)$.
\end{center}
Let $C_c^\infty(X_E, \overline{\mathbb{F}}_l)^\Gamma$ be the space of
all $\Gamma$-invariant functions in
$C_c^\infty(X_E, \overline{\mathbb{F}}_l)$.  We end this section with
a proposition comparing the integrals on the spaces $X_E$ and $X_F$.
\begin{proposition}\label{1}
Let $d\mu_E$ and $d\mu_F$ be Haar measures on $X_E$ and $X_F$
respectively. Then, there exists a non-zero scalar
$c\in \overline{\mathbb{F}}_l$ such that
$$ \int_{X_E}\phi \,d\mu_E = c \int_{X_F}\phi \,d\mu_F, $$
for all
$\phi\in C_c^\infty(X_E,\mathbb{\overline{F}}_l)^\Gamma$.
\end{proposition}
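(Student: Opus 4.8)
The plan is to compare the two integrals via the geometry of the coset spaces $X_E = N_{n-1}(E)\backslash G_{n-1}(E)$ and $X_F = N_{n-1}(F)\backslash G_{n-1}(F)$, using that a $\Gamma$-invariant compactly supported function on $X_E$ is, in an appropriate sense, supported near the fixed-point set $X_E^\gamma$. First I would identify the fixed-point set $X_E^\gamma$ for the $\Gamma$-action on $X_E$. The natural map $X_F \to X_E$ induced by $G_{n-1}(F) \hookrightarrow G_{n-1}(E)$ is $\Gamma$-equivariant with image in $X_E^\gamma$; the key geometric input is that this map identifies $X_F$ with $X_E^\gamma$ (or at least a union of $X_F$-orbits covering it), which follows from Hilbert 90 / vanishing of $H^1(\Gamma, N_{n-1}(E))$ together with the analogous statement for $G_{n-1}$, so that both $N_{n-1}$-cosets and $G_{n-1}$-cohomology descend correctly. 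This is essentially the same mechanism already invoked in subsection (\ref{Image}) via Proposition (\ref{tr}).

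Next, I would invoke Proposition (\ref{tr}) (Treumann--Venkatesh) for the constant sheaf $\mathcal{F} = \overline{\mathbb{F}}_l$ on $X_E$: the restriction map induces an isomorphism $\widehat{H}^0\big(C_c^\infty(X_E,\overline{\mathbb{F}}_l)\big) \xrightarrow{\sim} C_c^\infty(X_E^\gamma, \overline{\mathbb{F}}_l) = C_c^\infty(X_F, \overline{\mathbb{F}}_l)$, where the last identification uses the description of $X_E^\gamma$ from the previous step. Since $l \nmid l$ is false — rather, since $\Gamma$ has order $l$ and we are in characteristic $l$ — the norm operator $N_\gamma$ acts as $l = 0$ composed with nothing useful; more precisely on the $\Gamma$-invariants $C_c^\infty(X_E,\overline{\mathbb{F}}_l)^\Gamma$ the norm operator is multiplication by $l = 0$ on the diagonal part, so $\widehat{H}^0 = C_c^\infty(X_E,\overline{\mathbb{F}}_l)^\Gamma / N_\gamma(C_c^\infty(X_E,\overline{\mathbb{F}}_l))$, and the point is that the composite
\[
C_c^\infty(X_E,\overline{\mathbb{F}}_l)^\Gamma \twoheadrightarrow \widehat{H}^0 \xrightarrow{\ \sim\ } C_c^\infty(X_F,\overline{\mathbb{F}}_l)
\]
is exactly the restriction-of-sections map $\phi \mapsto \phi|_{X_F}$. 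Then I would argue that the $G_{n-1}(F)$-invariant (hence $X_F$-translation-invariant) linear functional $\phi \mapsto \int_{X_E}\phi\,d\mu_E$, restricted to $C_c^\infty(X_E,\overline{\mathbb{F}}_l)^\Gamma$, factors through this quotient: indeed $\int_{X_E} N_\gamma(\psi)\,d\mu_E = \sum_{j=0}^{l-1}\int_{X_E}\gamma^j\psi\,d\mu_E = l\int_{X_E}\psi\,d\mu_E = 0$ by $\Gamma$-invariance of $d\mu_E$ and characteristic $l$. Hence $\int_{X_E}(-)\,d\mu_E$ descends to a linear functional on $C_c^\infty(X_F,\overline{\mathbb{F}}_l)$ that is invariant under right translation by $G_{n-1}(F)$, and therefore by uniqueness of the Haar functional on $X_F$ (up to scalar) it equals $c\int_{X_F}(-)\,d\mu_F$ for some $c \in \overline{\mathbb{F}}_l$. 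Non-vanishing of $c$: pick $\phi \in C_c^\infty(X_F,\overline{\mathbb{F}}_l)$ with $\int_{X_F}\phi\,d\mu_F \ne 0$, extend it to a $\Gamma$-invariant $\widetilde\phi \in C_c^\infty(X_E,\overline{\mathbb{F}}_l)^\Gamma$ with $\widetilde\phi|_{X_F} = \phi$ and support meeting $X_E^\gamma$ only in (a neighbourhood of) the support of $\phi$ — possible because $X_E^\gamma$ is open and closed in its own right, or by averaging a small-support lift over $\Gamma$ — and then compute $\int_{X_E}\widetilde\phi\,d\mu_E$ directly as an integral over a tube around $X_F$, which is a non-zero multiple of $\int_{X_F}\phi\,d\mu_F$ since the transverse fibres contribute a locally constant non-zero factor.

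The main obstacle, I expect, is the clean identification of $X_E^\gamma$ with $X_F$ as $l$-spaces with $G_{n-1}(F)$-action, and in particular checking that the $N$-coset bookkeeping is compatible: one needs $H^1(\Gamma, N_{n-1}(E)) = 1$ (true, as $N_{n-1}$ is a successive extension of copies of $\mathbb{G}_a$ and $H^1(\Gamma, E) = 0$ by the normal basis theorem) so that every $\Gamma$-fixed class in $N_{n-1}(E)\backslash G_{n-1}(E)$ has a representative in $G_{n-1}(F)$, and then that two elements of $G_{n-1}(F)$ are in the same $N_{n-1}(E)$-coset iff they are in the same $N_{n-1}(F)$-coset (again Hilbert 90 for the relevant unipotent group over $E$). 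A second, more technical point is verifying the "tube" computation in the non-vanishing argument, i.e.\ that integration over $X_E$ of a $\Gamma$-invariant function supported near $X_F$ factors as a transverse integral times $\int_{X_F}$ with non-zero transverse factor; this is a routine local computation once coordinates transverse to $X_E^\gamma$ inside $X_E$ are chosen, using that the residue characteristic $p$ is invertible in $\overline{\mathbb{F}}_l$ so volumes of the relevant compact-open transversals do not vanish mod $l$. Everything else is formal manipulation with Haar functionals and Tate cohomology.
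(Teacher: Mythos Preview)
Your approach to the existence of $c$ is essentially the paper's: identify $X_E^\Gamma \simeq X_F$ via $H^1(\Gamma, N_{n-1}(E)) = 0$, show that $\int_{X_E}(-)\,d\mu_E$ on $C_c^\infty(X_E,\overline{\mathbb{F}}_l)^\Gamma$ factors through restriction to $X_F$, and invoke uniqueness of the Haar functional. The paper phrases the factoring step via the short exact sequence
\[
0 \longrightarrow C_c^\infty(X_E\setminus X_F,\overline{\mathbb{F}}_l) \longrightarrow C_c^\infty(X_E,\overline{\mathbb{F}}_l) \longrightarrow C_c^\infty(X_F,\overline{\mathbb{F}}_l) \longrightarrow 0
\]
together with a fundamental-domain argument on the free $\Gamma$-space $X_E\setminus X_F$ (yielding $\int_{X_E\setminus X_F}\phi = l\int_U\phi = 0$ for $\Gamma$-invariant $\phi$); your route through Proposition~\ref{tr} and $\int_{X_E} N_\gamma(\psi) = l\int_{X_E}\psi = 0$ is an equivalent repackaging.

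The argument for $c\ne 0$, however, does not go through as written. First, $X_E^\gamma = X_F$ is closed but \emph{not} open in $X_E$, so that clause does not help construct a lift. Second, ``averaging a small-support lift over $\Gamma$'' fails in characteristic $l$: applying $N_\gamma$ to any lift $\widetilde\phi'$ produces a $\Gamma$-invariant function whose restriction to $X_F$ is $l\cdot\phi = 0$, not $\phi$, and there is no averaging projector available. Third, even granting a $\Gamma$-invariant lift $\widetilde\phi$ with $\widetilde\phi|_{X_F}=\phi$ (which does exist, by surjectivity of restriction), your ``tube'' computation presupposes a $\Gamma$-equivariant local product structure $X_E \approx X_F \times T$ near $X_F$, and then needs $\int_T \eta \ne 0$ for some $\Gamma$-invariant $\eta$ on the transversal $T$ with $\eta(0)=1$; this is the same nonvanishing problem one step down, not a routine local computation. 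The paper sidesteps all of this: it takes a $\Gamma$-invariant compact open subgroup $I\subset G_{n-1}(E)$, pushes $1_I$ forward to $X_E$ via $\Psi(f)(g)=\int_{N_{n-1}(E)}f(ng)\,dn$, and observes that $\Psi(1_I)\in C_c^\infty(X_E,\overline{\mathbb{F}}_l)^\Gamma$ with $\int_{X_E}\Psi(1_I)\,d\mu_E$ equal to the Haar volume of $I$ in $G_{n-1}(E)$, which is nonzero in $\overline{\mathbb{F}}_l$ since $p\ne l$.
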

\begin{proof}
Since $N_{n-1}(E)$ is stable under the action of $\Gamma$ on
$G_{n-1}(E)$, we have the following long exact sequence of
non-abelian cohomology \cite[Chapter VII, Appendix]{MR1324577}:
$$ 
0 \longrightarrow N_{n-1}(E)^\Gamma \longrightarrow
G_{n-1}(E)^\Gamma \longrightarrow X_E^\Gamma \longrightarrow
H^1\big(\Gamma; N_{n-1}(E)\big) \longrightarrow H^1\big(\Gamma;
G_{n-1}(E)\big). 
$$ 
Since $H^1\big(\Gamma; N_{n-1}(E)\big) = 0$, we
get from the above exact sequence that
$$ X_E^\Gamma \simeq X_F. $$
Since $X_F$ is closed in $X_E$, we have the following exact sequence
of $\Gamma$-modules :
\begin{equation}\label{s}
0 \longrightarrow C_c^\infty\big(X_E \setminus X_F,
\mathbb{\overline{F}}_l\big)\longrightarrow C_c^\infty(X_E,
\mathbb{\overline{F}}_l)\longrightarrow
C_c^\infty(X_F,\mathbb{\overline{F}}_l)\longrightarrow 0.
\end{equation}
Now, the action of $\Gamma$ on $X_E \setminus X_F$ is
free, and 
it follows from Proposition \ref{tr} that
\begin{equation}\label{r}
H^1\big(\Gamma,
C_c^\infty(X_E \setminus X_F,\mathbb{\overline{F}}_l)\big) = 0.
\end{equation}
Using (\ref{s}) and (\ref{r}), we get the following exact sequence :
\begin{center}
$0 \longrightarrow C_c^\infty\big(X_E \setminus X_F,
\mathbb{\overline{F}}_l\big)^\Gamma \longrightarrow
C_c^\infty(X_E, \mathbb{\overline{F}}_l)^\Gamma
\longrightarrow
C_c^\infty(X_F,\mathbb{\overline{F}}_l)\longrightarrow 0$.
\end{center}
Again the free action of $\Gamma$ on $X_E\setminus X_F$ gives
a fundamental domain $U$ such that
$X_E\setminus X_F=\bigsqcup_{i=0}^{l-1}\gamma^iU$, and we
have
$$ \int_{X_E\setminus X_F}\phi \,d\mu_E = 
l\sum_{i=0}^{l-1}\int_U\phi \, d\mu_E = 0,
$$ 
for all
$\phi \in C_c^\infty\big(X_E \setminus X_F,
\mathbb{\overline{F}}_l\big)^\Gamma$.  Therefore the linear
functional $d\mu_E$ induces a $G_{n-1}(F)$-invariant linear
functional on $C_c^\infty(X_F, \mathbb{\overline{F}}_l)$ and
we have
$$ \int_{X_E}\phi \,d\mu_E = c \int_{X_F}\phi \,d\mu_F, $$
for some scalar $c$. Now we will show that $c \not= 0$. By
\cite[Chapter 1, Section 2.8]{MR1395151}, we have a surjective map
$\Psi : C_c^\infty(G_n(E), \overline{\mathbb{F}}_l)
\longrightarrow C_c^\infty(X_E, \overline{\mathbb{F}}_l)$,
defined by
$$ \Psi(f)(g) := \int_{N_n(E)}f(ng) \,dn, $$
for all $f \in C_c^\infty(G_n(E), \overline{\mathbb{F}}_l)$,
where $dn$ is a Haar measure on $N_n(E)$. Then there exists a
$\Gamma$-invariant compact open subgroup $I \subseteq G_n(E)$
such that $\Psi(1_I) \not= 0$, where $1_I$ denotes the
characteristic function on $I$. So the Haar measure $d\mu_E$
is non-zero on the space
$C_c^\infty(X_E, \overline{\mathbb{F}}_l)^\Gamma$ and this
implies that $c \not= 0$. Hence the proposition follows.
\end{proof}
	
\begin{remark}\label{rmk_int2}
\rm Keep the notations and hypothesis in Proposition \ref{1}.  From
now, the Haar measures $d\mu_E$ and $d\mu_F$ on $X_E$ and $X_F$
respectively, are chosen so as to make $c = 1$. Then we have
$$ \int_{X_E}\phi \,d\mu_E =  \int_{X_F}\phi \,d\mu_F. $$
Moreover, if $e$ is the ramification index of the extension $E$
over $F$, then for all $r\notin\{te : t \in \mathbb{Z}\}$, we
have
$$ \int_{(X_E^r)^\Gamma}\phi\,d\mu_F = 0 $$
and for all $r\in\{te : t\in\mathbb{Z}\}$, we have
$$ \int_{(X_E^r)^\Gamma}\phi\,d\mu_F = \int_{X_F^{\frac{r}{e}}}\phi\,d\mu_F.
$$
\end{remark}	
	
\subsection{Finiteness of Tate cohomology}
In this part, we prove some results on finiteness of the Tate
cohomology of finite length representations of ${\rm GL}_n$. First,
we introduce some notations. Let $U_n(K)$ be the subgroups of
$G_n(K)$, given by
$$ U_n(K)=\Big\{\begin{pmatrix}
	1 & C\\
	0 & 1
\end{pmatrix}: C \in K^{n-1}\Big\}$$ respectively. Note that
$U_n(K)$ is contained in the mirabolic subgroup $P_n(K)$. We use
the short notation $Z_{K,n}$ to denote the coset space 
$P_n(K)/P_{n-1}(K)U_n(K)$. Let ${\rm Rep}_R(G)$ denote the 
category of smooth
$R$-representations of a locally profinite group $G$, where $R$
denotes either $\mathcal{K}$ or $\overline{\mathbb{Q}}_l$ or
$\overline{\mathbb{F}}_l$. Then we have four fundamental
functors:
$$
\Psi^-:{\rm Rep}_R(P_n)\rightarrow {\rm Rep}_R(G_{n-1}),
\Psi^+: {\rm Rep}_R(G_{n-1})\rightarrow {\rm Rep}_R(P_n)
$$
$$
\Phi^-:{\rm Rep}_R(P_n)\rightarrow {\rm Rep}_R(P_{n-1}),
\Phi^+:{\rm Rep}_R(P_{n-1})\rightarrow {\rm Rep}_R(P_n).
$$
For the definitions of the functors $\Phi^{\pm}$ and $\Psi^{\pm}$, 
see \cite[Section 3]{MR579172} for $R=\overline{\mathbb{Q}}_l$, and
\cite[Chapter III, Section 1]{MR1395151} for
$R=\overline{\mathbb{F}}_l$.
	
\subsubsection{}
Let $\tau$ be a smooth $R$-representation of $P_n(K)$. The $m$-th
derivative of $\tau$, denoted by $\tau^{(m)}$, is defined as the
representation $\Psi^-(\Phi^-)^{m-1}(\tau)$ of $G_{n-m}(K)$. There is
a functorial filtration on $\tau$, given by
$$
0\subseteq\tau_{n}\subseteq\tau_{n-1}\subseteq\dots
\subseteq\tau_2\subseteq\tau_1=\tau,
$$
where $\tau_m=(\Phi^+)^{m-1}(\Phi^-)^{m-1}(\tau)$ and
$\tau_m/\tau_{m+1}=(\Phi^+)^{m-1}(\Psi^+)(\tau^{(m)})$. We 
have the following easy lemma.
\begin{lemma}\label{finite_length}
Let $\rho$ be a finite length representation of $G_t(K)$, 
where $1\leq t < n$. Then
$$ (\Phi^+)^{n-t-1}(\Psi^+)(\rho) $$ 
is also of finite length as a representation of $P_n(K)$.
\end{lemma}
\begin{proof}
This is an immediate consequence of \cite[Chapter III, Subsection
1.5]{MR1395151} and the exactness of the functor
$(\Phi^+)^{n-t-1}(\Psi^+)$.
\end{proof}
\subsubsection{}
Let $E$ be a finite Galois extension of a $p$-adic field $F$ with
$[E:F]=l$, where $l$ and $p$ are distinct primes. Fix a non-trivial
additive character $\psi_F:F\rightarrow \Lambda^\times$. By abuse 
of notation, the composition 
$$ F\xrightarrow {\psi_F} \Lambda^\times
\hookrightarrow \overline{\mathbb{Q}}_l^\times $$ 
is also denoted by $\psi_F$. Let $\psi_E$ be
the character of $E$, defined by the composition
$\psi_F\circ {\rm Tr}_{E/F}$, where ${\rm Tr}_{E/F}$ denotes the trace
map of the extension $E/F$. The mod-$l$ reductions of $\psi_F$ and
$\psi_E$ are denoted by $\overline{\psi}_F$ and $\overline{\psi}_E$,
respectively. Then, we have the following finiteness result of the Tate
cohomology groups.
\begin{proposition}\label{finiteness}
  Let $\Pi$ be a finite length $l$-modular representation of $G_n(E)$
  with an isomorphism $T:\Pi\rightarrow \Pi^\gamma$ and
  $T^l={\rm id}$. Then, the Tate cohomology $\widehat{H}^i(\Pi)$, with
  respect to the operator $T$, is a finite length representation of
  $G_n(F)$.
\end{proposition}
\begin{proof}
  We prove the proposition using induction on the integer $n$. The
  case $n=1$ is clear. So, we assume that the proposition is true for
  all finite length $l$-modular representations of
  $G_t(E)\rtimes \Gamma$ and for all $t<n$. Now, we consider $\Pi$ as
  a representation of the mirabolic subgroup $P_n(E)$. Since
  $\overline{\psi}_E(\gamma(x)) = \overline{\psi}_E(x)$, for $x\in E$,
  we get the isomorphism
\begin{equation}\label{isom_1}
\Phi^-(\Pi^\gamma)\simeq\Phi^-(\Pi)^\gamma,
\end{equation}
as representation of $P_{n-1}(E)$. Similarly, for any smooth
$l$-modular representation $\tau$ of $P_{n-1}(E)$, we have
$P_n(E)$-equivariant isomorphism
\begin{equation}\label{isom_2}
\Phi^+(\tau^\gamma) \simeq \Phi^+(\tau)^\gamma.
\end{equation}
Using (\ref{isom_1}) and (\ref{isom_2}), and the isomorphism $T$, we
get an isomorphism between the representations $\Pi_m$ and
$\Pi_m^\gamma$, and also between the representations $\Pi^{(m)}$ and
$(\Pi^{(m)})^\gamma$, for all $m\leq n$.
		
Recall that $Z_{E,m}$ denote the coset space
$P_m(E)/P_{m-1}(E)U_m(E)$. Since $P_{m-1}(E)U_m(E)$ is a
$\Gamma$-stable subgroup of $P_m(E)$, we have the following long exact
sequence of non-abelian cohomology (\cite[Appendix, Proposition
1]{MR1324577}):
\begin{equation}\label{coset_inv}
0\longrightarrow P_{m-1}(F)U_m(F)\longrightarrow
P_m(F)\longrightarrow Z_{E,m}^\Gamma\longrightarrow H^1(\Gamma,
P_{m-1}(E)U_m(E))
\longrightarrow H^1(\Gamma, P_m(E)).
\end{equation}
Consider the short exact sequence of non-abelian $\Gamma$-modules
\begin{equation}\label{claim_sequence}
0 \longrightarrow U_m(E)\longrightarrow
P_{m-1}(E)U_m(E)\longrightarrow P_{m-1}(E)\longrightarrow 0.
\end{equation}
From Hilbert's theorem $90$, we get that $H^1(\Gamma, U_m(E))$ and
$\widehat{H}^1(\Gamma,P_{m-1}(E))$ are trivial. Then, from the long
exact sequence of non-abelian cohomology corresponding to
(\ref{claim_sequence}), we have $H^1(\Gamma,
P_{m-1}(E)U_m(E))=0$. Hence, the long exact sequence (\ref{coset_inv})
gives the equality $Z_{E,m}^\Gamma=Z_{F,m}$. Now, using Proposition
\ref{tr} repeatedly $(m-1)$-times, we get the $P_n(F)$-equivariant
isomorphism
$$
\widehat{H}^i(\Pi_m/\Pi_{m+1}) \simeq
(\Phi^+)^{m-1}(\Psi^+)\big(\widehat{H}^i(\Pi^{(m)})\big).
$$
Using Leibniz formula for derivatives (\cite[Lemma 1.10, Chapter
3]{MR1395151}), we get that $\Pi^{(m)}$ is a finite length
representation of $G_{n-m}(E)$. By induction hypothesis, the
$G_{n-m}(F)$-representation $\widehat{H}^i(\Pi^{(m)})$ is of
finite length, for all $m<n$. In view of Lemma \ref{finite_length},
it follows from the above isomorphism that the $P_n(F)$-representation
$\widehat{H}^i(\Pi_m/\Pi_{m+1})$ is of finite length for all $m<n$.
		
Now, for each $m\in\{1,2,\dots,n-1\}$, we consider the short exact
sequence of $P_n(E)$-representations
$$
0\longrightarrow \Pi_{m+1}\longrightarrow \Pi_m
\longrightarrow \Pi_m/\Pi_{m+1} \longrightarrow 0.
$$ 
Since $\Gamma$ is cyclic, the corresponding long exact sequence of
Tate cohomology gives the following diagram:
$$
\begin{tikzcd}[column sep={1.5cm,between origins},
  row sep={1.932050908cm,between origins}]
  & \widehat{H}^0(\Pi_{m+1}) \arrow[rr,]  && \widehat{H}^0(\Pi_m) \arrow[rd,] &  \\
  \widehat{H}^1(\Pi_m/\Pi_{m+1}) \arrow[ru, ]&  &&  &
  \widehat{H}^0(\Pi_m/\Pi_{m+1}) \arrow[ld,] \\
  & \widehat{H}^1(\Pi_m) \arrow[lu, ] && \widehat{H}^1(\Pi_{m+1})
  \arrow[ll, ] &
\end{tikzcd}
$$
We denote the above exact sequence by $S(m)$. Now, consider the
largest integer $r$ for which $\Pi_r$ is non-zero. Using induction
hypothesis, the Tate cohomology groups $\widehat{H}^i(\Pi_r)$ is of
finite length as a representation of $P_n(F)$.  Now, using the exact
sequence $S(r-1)$ and the finiteness of
$\widehat{H}^i(\Pi_{r-1}/\Pi_r)$, we get that
$\widehat{H}^i(\Pi_{r-1})$ is a finite length representation of
$P_n(F)$. Again, using the finiteness of both the representations
$\widehat{H}^i(\Pi_{r-1})$ and $\widehat{H}^i(\Pi_{r-2}/\Pi_{r-1})$,
it follows from the exact sequence $S(r-2)$ that
$\widehat{H}^i(\Pi_{r-2})$ is of finite length. Thus, inductively, we
 get that $\widehat{H}^i(\Pi)$ is of finite length as a
representation of $P_n(F)$ and hence of $G_n(F)$. This completes the
proof.
\end{proof}
As a corollary, we have
\begin{corollary}\label{finiteness_integral}
Let $(\Pi,V)$ be an integral $\mathcal{K}$-representation of
$G_n(E)$ with an isomorphism $T:(\Pi,V) \rightarrow (\Pi^\gamma,V)$
and $T^l = {\rm id}$. Then, for any $G_n(E)\rtimes \Gamma$-invariant
$\Lambda$-lattice $\mathcal{L}$ in $V$ (Here, $\Gamma$ acts on $V$
via $T$), the Tate cohomology groups $\widehat{H}^i(\mathcal{L})$,
$i\in\{0,1\}$, are of finite length as representations of $G_n(F)$.
\end{corollary}
\begin{proof}
Recall that $\mathcal{L}$ is a free $\Lambda$-module and 
$\mathcal{L}/l\mathcal{L}$ is a finite length $G_n(E)$-representation
(see \cite[II.5.11.a]{MR1395151} for finiteness).  Consider the short
exact sequence of $G_n(E)\rtimes\Gamma$-modules
$$ 0\longrightarrow \mathcal{L}\xrightarrow{{\rm mult.}l}
\mathcal{L}\longrightarrow \mathcal{L}/l\mathcal{L}
\longrightarrow 0. $$ 
By Proposition \ref{finiteness}, the Tate cohomology group
$\widehat{H}^i(\mathcal{L}/l\mathcal{L})$ has finite length as
representations of $G_n(F)$. From the long exact
sequence of Tate cohomology corresponding to the above short exact
sequence, we have
$$ 0\rightarrow \widehat{H}^0(\mathcal{L})\rightarrow
\widehat{H}^0(\mathcal{L}/l\mathcal{L})\rightarrow
\widehat{H}^1(\mathcal{L})\rightarrow 0. $$
and 
$$ 0\rightarrow \widehat{H}^1(\mathcal{L})\rightarrow
\widehat{H}^1(\mathcal{L}/l\mathcal{L})\rightarrow
\widehat{H}^0(\mathcal{L})\rightarrow 0. $$
Thus, we get that each
$\widehat{H}^i(\mathcal{L})$ is of finite length.
\end{proof}
	
\subsection{Frobenius Twist}\label{frob_twist}
Let $G$ be a locally profinite group (i.e., locally compact and
totally disconnected). Let $(\sigma, V)$ be an $l$-modular
representation of $G$. Consider the vector space $V^{(l)}$, where the
underlying additive group structure of $V^{(l)}$ is same as that of
$V$ but the scalar action $*$ on $V^{(l)}$ is given by
\begin{center}
$c * v = c^{\frac{1}{l}}v$, for all $c \in \overline{\mathbb{F}}_l$, $v\in V$.
\end{center}
Then the action of $G$ on $V$ induces a representation $\sigma^{(l)}$
of $G$ on $V^{(l)}$. The representation $(\sigma^{(l)}, V^{(l)})$ is
called the Frobenius twist of the representation $(\sigma,V)$.
	
We end this subsection with a lemma which will be used in the main result.
\begin{lemma}\label{ar}
Let $\psi$ be a non-trivial $l$-modular additive character of $F$
and let $\Theta$ be the non-degenerate character of $N_n(F)$
corresponding to $\psi$. If $(\pi,V_\pi)$ and 
$(\sigma, V_\sigma)$
are two $l$-modular generic representations of $G_n(F)$
and $G_{n-1}(F)$ respectively, then
$$ \gamma(X,\pi,\sigma,\psi)^l=\gamma(X^l,\pi^{(l)},
\sigma^{(l)},\psi^l). $$
\end{lemma}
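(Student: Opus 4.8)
The plan is to reduce the identity $\gamma(X,\pi,\sigma,\psi)^l = \gamma(X^l,\pi^{(l)},\sigma^{(l)},\psi^l)$ to a purely Galois-side statement and then verify it there. First I would invoke the compatibility of the Rankin--Selberg $\gamma$-factor with the local Langlands correspondence. Concretely, lift $\pi$ and $\sigma$ to integral $l$-adic generic representations and use the $l$-adic correspondence $\Pi_K$ together with Proposition \ref{62} to transfer the problem to Weil-Deligne representations; alternatively, since the paper has set up mod-$l$ $\gamma$-factors and their compatibility with reduction in subsection (\ref{prop}), one can work directly with the mod-$l$ correspondence. The key point is that both sides of the claimed identity are $\gamma$-factors attached (via LLC and additivity/inductivity) to Weil group representations, so it suffices to prove: for an $l$-modular (virtual) representation $\varrho$ of $\mathcal{W}_F$ of dimension zero, $\gamma(X,\varrho,\psi)^l = \gamma(X^l, \varrho^{(l)}, \psi^l)$, where $\varrho^{(l)}$ is the Frobenius twist of $\varrho$.

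The heart of the argument is the behaviour of the $\epsilon$-factor under Frobenius twist in characteristic $l$. Here I would use the explicit description of the mod-$l$ $\epsilon$-factor: writing $\epsilon(X,\varrho,\psi) = (q_K^{1/2}X)^{n(\varrho,\psi)}\epsilon(\varrho,\psi)$ as in (\ref{degree}), the exponent $n(\varrho,\psi)$ (a sum of conductors and a term from $n(\psi)$) is insensitive to Frobenius twist for a dimension-zero object, while the constant $\epsilon(\varrho,\psi)$, being built out of Gauss sums $\sum_{x} \bar\varrho(x)\psi(x)$ over the residue field, satisfies $\epsilon(\varrho,\psi)^l = \epsilon(\varrho^{(l)},\psi^l)$ because raising a Gauss sum to the $l$-th power in characteristic $l$ applies the Frobenius $t\mapsto t^l$ simultaneously to the values of $\bar\varrho$ and to $\psi$, and $x\mapsto x^l$ is a bijection on the residue field. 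Combined with the substitution $X \mapsto X^l$ absorbing the $X^{n}$ into $X^{ln}$, this gives $\epsilon(X,\varrho,\psi)^l = \epsilon(X^l,\varrho^{(l)},\psi^l)$. Since the $L$-factors in the cuspidal/generic case relevant to us are trivial (Theorem \ref{k}, applied after reducing to $m=n-1$), or in general transform the same way under Frobenius twist because they are rational functions with coefficients a polynomial in $q_K$ and the Frobenius eigenvalues (which also get raised to the $l$-th power), the $\gamma = \epsilon \cdot L(q^{-1}X^{-1},\ldots)/L(X,\ldots)$ identity follows termwise.

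The main obstacle I anticipate is making the Frobenius-twist bookkeeping on the Galois side fully rigorous: one must check that $r_l(\Pi_K(\tilde\pi))^{(l)}$ corresponds under LLC to $\pi^{(l)}$ (i.e. that LLC intertwines Frobenius twist on the automorphic side with Frobenius twist on the Galois side), and that the same additive character $\psi^l$ appears consistently on both sides — in particular that reduction mod $l$ of a self-dual Haar measure and the passage $\psi\mapsto\psi^l$ are compatible with the normalizations used in the definition of $\epsilon(X,\pi,\sigma,\psi)$ in subsection (\ref{w}). A clean way to sidestep part of this is to argue entirely within the mod-$l$ theory: express $\gamma(X,\pi,\sigma,\psi)$ as an explicit monomial $c X^{n(\pi,\sigma,\psi)}$ with $c\in\overline{\mathbb{F}}_l^\times$ (using that $L$-factors are $1$ here), observe $n(\pi,\sigma,\psi) = n(\pi^{(l)},\sigma^{(l)},\psi^l)$ since Frobenius twist does not change conductors or central-character data, and finally prove $c^l = c^{(l)}$ where $c^{(l)}$ is the corresponding constant for the twisted data — this last being the Gauss-sum computation above. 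I would then conclude $\gamma(X,\pi,\sigma,\psi)^l = c^l X^{l\cdot n} = c^{(l)} (X^l)^{n} = \gamma(X^l,\pi^{(l)},\sigma^{(l)},\psi^l)$, which is the assertion.
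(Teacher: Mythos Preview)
Your route through the Galois side is unnecessarily heavy, and the paper's argument is entirely automorphic and much shorter. The key observation you are missing is that if $W_\pi$ is a Whittaker functional for $(\pi,\psi)$, then $v\mapsto W_\pi(v)^l$ is a Whittaker functional for $(\pi^{(l)},\psi^l)$; consequently $\mathbb{W}(\pi^{(l)},\psi^l)$ consists precisely of the functions $W^l$ for $W\in\mathbb{W}(\pi,\psi)$, and likewise for $\sigma$. Because we are in characteristic $l$ and the Rankin--Selberg coefficients $c_r^F(W,W')$ are finite $\overline{\mathbb{F}}_l$-sums, one has $c_r^F(W^l,(W')^l)=c_r^F(W,W')^l$. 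Now raise the functional equation for $(\pi,\sigma,\psi)$ to the $l$-th power, write down the functional equation for $(\pi^{(l)},\sigma^{(l)},\psi^l)$, substitute $X\mapsto X^l$ in the latter, and compare: the two identities have identical zeta-integral sides, so the $\gamma$-factors agree. No LLC, no Galois $\epsilon$-factors, no $L$-factor discussion is needed.

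Your proposal has genuine gaps precisely at the points you flag. First, transferring the problem to the Galois side requires that the mod-$l$ correspondence intertwine Frobenius twist on both sides; this is not established in the paper and is not obviously easier than the lemma itself. Second, your Gauss-sum heuristic $\sum_x\bar\varrho(x)\psi(x)$ only makes sense for one-dimensional $\varrho$; for higher-dimensional Weil representations the $\epsilon$-constant is not a single residue-field Gauss sum, so the ``raise to the $l$-th power'' step is not justified as written. Third, Theorem \ref{k} gives $L=1$ only for \emph{cuspidal} pairs in distinct ranks, not for arbitrary generic $\pi,\sigma$, so the reduction $\gamma=\epsilon$ is not available in the generality of the lemma. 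Finally, the closing shortcut ``write $\gamma=cX^n$ and show $c^l=c^{(l)}$'' is not a reduction: proving that equality of constants is exactly the content of the lemma.
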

\begin{proof}
Let $W_\pi$ be a Whittaker functional on the representation $\pi$.
Then the composite map
$$ V_\pi \xrightarrow{W_\pi} \overline{\mathbb{F}}_l
\xrightarrow{x\mapsto x^l} \overline{\mathbb{F}}_l, $$ 
denoted by $W_{\pi^{(l)}}$, is a Whittaker functional (with respect to
$\psi^l:N_n(F)\rightarrow \overline{\mathbb{F}}_l^\times$) 
on the representation $\pi^{(l)}$, as we have:
$$ W_{\pi^{(l)}}(c.v) = W_{\pi}((c^{\frac{1}{l}}v))^l=
c W_{\pi^{(l)}}(v) $$
and
$$ W_{\pi^{(l)}}(\pi^{(l)}(n)v) = (\Theta(n)W_{\pi}(v))^l
= \Theta^l(n)W_{\pi^{(l)}}(v),$$ for all $v \in V_{\pi}$,
$c \in \overline{\mathbb{F}}_l$ and all $n \in N_n(F)$.
		
So the Whittaker model $\mathbb{W}(\pi^{(l)},\psi^l)$ 
consists of the
functions $W_v^l$, where $W_v$ varies in
$\mathbb{W}(\pi,\psi)$. Similarly the Whittaker model
$\mathbb{W}(\sigma^{(l)}, \psi^l)$ of $\sigma^{(l)}$ consists of the
functions $U_v^l$, where $U_v$ varies in
$\mathbb{W}(\sigma,\psi)$. Then by the Rankin-Selberg functional
equation in the subsection (\ref{w}), we have
\begin{equation}\label{30}
\sum_{r\in\mathbb{Z}}
c^F_r(\widetilde{W_v},\widetilde{U_v})^lq_F^{-lr/2}X^{-lr} =
\omega_\sigma(-1)^{n-2}\gamma(X,\pi,\sigma,\psi)^l
\sum_{r\in\mathbb{Z}}c^F_r(W_v,U_v)^lq_F^{lr/2}X^{lr}
\end{equation}
and 
\begin{equation}\label{31}
\sum_{r\in\mathbb{Z}}
c^F_r(\widetilde{W_v^l},\widetilde{U_v^l})q_F^{-r/2}X^{-r} =
\omega_{\sigma^{(l)}}(-1)^{n-2}\gamma(X,\pi^{(l)},\sigma^{(l)},\psi^l)
\sum_{r \in \mathbb{Z}}c^F_r(W_v^l,U_v^l)q_F^{r/2}X^r.
\end{equation}
Replace $X$ by $X^l$ to the equation (\ref{31}), we have
\begin{equation}\label{32}
\sum_{r\in\mathbb{Z}}
c^F_r(\widetilde{W_v^l},\widetilde{U_v^l})q_F^{-r/2}X^{-lr} =
\omega_{\sigma^{(l)}}(-1)^{n-2}\gamma(X^l,\pi^{(l)},\sigma^{(l)},\psi^l)
\sum_{r \in \mathbb{Z}}c^F_r(W_v^l,U_v^l)q_F^{r/2}X^{lr}.
\end{equation}
Then from the equations (\ref{30}) and (\ref{32}), we get
$$ \gamma(X,\pi,\sigma,\psi)^l = \gamma(X^l,\pi^{(l)},\sigma^{(l)},\psi^l). $$
\end{proof}
	
\section{Tate Cohomology of Whittaker Lattice}\label{Tate_main_result}
As before, we fix a non-trivial additive character
$\psi_F:F\rightarrow \Lambda^\times$. Let $\psi_E$ be
the composition $\psi_F \circ {\rm Tr}_{E/F}$, where ${\rm Tr}_{E/F}$
is the trace map of the extension $E/F$. The mod-$l$ reductions of
$\psi_F$ and $\psi_E$ are denoted by $\overline{\psi}_F$ and
$\overline{\psi}_E$ respectively. Let $\Theta_F$ and $\Theta_E$ be the
characters of $N_n(F)$ and $N_n(E)$ respectively, as defined in
(\ref{rr}). Similarly, we denote by $\overline{\Theta}_F$ and
$\overline{\Theta}_E$ the mod-$l$ reductions of $\Theta_F$ and
$\Theta_E$ respectively.
\subsection{}
Let $(\pi, V)$ be a generic $R$-representation of $G_n(E)$, where
$R=\overline{\mathbb{Q}}_l$ or $\overline{\mathbb{F}}_l$, such that
$\pi$ is isomorphic to $\pi^\gamma$, for all $\gamma\in \Gamma$. Let
$\mathbb{W}(\pi, \psi_E)$ be the Whittaker model of $\pi$. For
$W\in \mathbb{W}(\pi, \psi_E)$, we recall that $\gamma.W$ is a
function given by
$$ (\gamma.W)(g) = W(\gamma^{-1}(g)), $$
for all $g\in G_n(E)$. Note that $\gamma.W\in \mathbb{W}(\pi,\psi_E)$ 
(see Lemma \ref{inv_whit_model}). Thus, we define
$$ T_\gamma: \mathbb{W}(\pi,\psi_E)\rightarrow 
\mathbb{W}(\pi, \psi_E) $$ 
by setting $T_\gamma(W)=\gamma.W$, for all 
$W\in \mathbb{W}(\pi, \psi_E)$. 
The map $T_\gamma$ gives an isomorphism between 
$(\pi^\gamma, V)$ and $(\pi, V)$ as we have
$$ T_\gamma(\pi(g)W)(h)=\pi(g)W(\gamma^{-1}(h)) =
W(\gamma^{-1}(h)g) $$
and
$$ [\pi^\gamma(g)T_\gamma(W)](h) = T_\gamma(W)(h\gamma(g)) =
W(\gamma^{-1}(h)g), $$
for all $g, h\in G_n(E)$. 
	
\subsection{Jacquet-functors and Tate cohomology}
We begin with a few elementary results on the compatibility of Jacquet
(twisted Jacquet) functors with Tate cohomology. Let
$(\pi, \mathcal{L})$ be a smooth
$\Lambda[G_n(E)\rtimes \Gamma]$-module. Let
$\lambda=(n_1,n_2,\dots,n_r)$ be a partition of $n$ and let
$P_\lambda = M_\lambda N_\lambda$ be a parabolic subgroup of $G_n$
with $N_\lambda$ its unipotent radical and $M_\lambda$ is a standard
Levi subgroup. Let $\mathcal{L}(N_\lambda(E))$ be the space spanned by
the set of vectors
$$ \big\{v-\pi(n)v: v\in \mathcal{L}, n\in N_\lambda(E)\big\}. $$
Note that the space $\mathcal{L}(N_\lambda(E))$ is stable under the
action of $\Gamma$.
\begin{lemma}\label{jacquet-langlands-tate}
The image of the natural map $\widehat{H}^0(\mathcal{L}
(N_\lambda(E))) \rightarrow \widehat{H}^0(\mathcal{L})$ 
is equal to $\widehat{H}^0(\mathcal{L})(N_\lambda(F))$.
\end{lemma}
\begin{proof}
Let $\phi$ be the natural map
$\widehat{H}^0(\mathcal{L}(N_\lambda(E)))\rightarrow
\widehat{H}^0(\mathcal{L})$. Let $v\in \text{img}(\phi)$ and let
$\tilde{v}$ be a lift of $v$ in
$\mathcal{L}(N_\lambda(E))^\Gamma$. Then there exists a compact open
subgroup $\mathcal{N}$ of $N_\lambda(E)$ such that
\begin{equation}\label{lift_sum_1}
\int_{\mathcal{N}}\pi(n)\tilde{v}\,dn = 0.
\end{equation}
Since $\pi$ is smooth, there exists a compact open subgroup
$\mathcal{N}'$ of $\mathcal{N}$ of finite index such that
$$ \int_{\mathcal{N}}\pi(n)\tilde{v}\,dn =
\sum_{n\in \mathcal{N}/\mathcal{N}'}\pi(n)\tilde{v}\,dn. $$
Since $N_\lambda(E)$ has a filtration of $\Gamma$-stable compact open
subgroups, we may assume that $\mathcal{N}$ is $\Gamma$-stable. If $X$
denotes the coset space $\mathcal{N}/\mathcal{N}'$, then we have
\begin{equation}\label{sum_lift_2}
\sum_{x\in X} \pi(x)\tilde{v} =  \sum_{y\in X^\Gamma} 
\pi(y)\tilde{v}\ + \sum_{z\in {X\setminus X^\Gamma}} \pi(z)\tilde{v}.
\end{equation}
Since the $\Gamma$-action on $X\setminus X^\Gamma$ is free, there
exists a subset $U$ such that $X\setminus X^\Gamma$ is the disjoint
union of $\gamma^iU$, $1\leq i \leq l$. As $\tilde{v}$ is
$\Gamma$-invariant, we have
$$
\sum_{z\in {X\setminus X^\Gamma}} \pi(z)\tilde{v} =
\sum_{i=1}^l\sum_{u\in U}\pi(\gamma^i(u))\tilde{v} = 
N\big(\sum_{u\in U} \pi(u)\tilde{v}\big),
$$
where $N = 1+\gamma+\cdots +\gamma^{l-1}$. This shows that 
$$ \sum_{z\in {X\setminus X^\Gamma}} \pi(z)v = 0 $$ 
in $\widehat{H}^0(\mathcal{L})$.  Therefore, it follows from
(\ref{lift_sum_1}) and (\ref{sum_lift_2}) that
$$ \sum_{y\in X^\Gamma} \pi(y)v = 0. $$
This implies that $v$ belongs to
$\widehat{H}^0(\mathcal{L})(N_\lambda(F))$. Conversely, let $w$ be 
an element of $\widehat{H}^0(\mathcal{L})(N_\lambda(F))$. Let
$\mathcal{N}_F$ be the compact open subgroup of $N_\lambda(F)$ with
$$ \int_{\mathcal{N}_F}\pi(n)w\,dn = 0. $$
Choose a compact open subgroup $\mathcal{N}_F'$ of $\mathcal{N}_F$ of
finite index such that
$$ \int_{\mathcal{N}_F}\pi(n)w\,dn =
\sum_{n\in \mathcal{N}_F/\mathcal{N}_F'}\pi(n)w\,dn. $$
Let $\tilde{w}$ be a lift of $w$ in $\mathcal{L}^\Gamma$. Consider 
the element
$$ \tilde{w}_1 = \tilde{w} -
\frac{1}{\lvert\mathcal{N}_F/\mathcal{N}_F'\rvert} \sum_{n\in
\mathcal{N}_F/\mathcal{N}_F'}\pi(n)\tilde{w}.
$$
Then $\tilde{w}_1$ belongs to $\mathcal{L}(N_\lambda(E))^\Gamma$ 
and $\phi(\tilde{w}_1) = w$. Moreover, we have
$$ \int_{\mathcal{N}_F} \pi(n)\tilde{w}_1\,dn =
\sum_{n\in \mathcal{N}_F/\mathcal{N}_F'} \pi(n)\tilde{w}_1 = 0. $$
This completes the proof.
\end{proof}
\begin{lemma}\label{Jaquet_Tate}
Let $(\pi, V)$ be a smooth $l$-modular representation of 
$G_n(E)\rtimes \Gamma$ such that
$\widehat{H}^1(V_{N_\lambda(E)}) = 0$,
$\widehat{H}^0(V)_{N_\lambda(F)}$ is non-zero and
$\widehat{H}^0(V_{N_\lambda(E)})$ is irreducible 
under $M_\lambda(F)$. Then, the $M_\lambda(F)$-representation
$\widehat{H}^0(V)_{N_\lambda(F)}$ is isomorphic to
$\widehat{H}^0(V_{N_\lambda(E)})$.
\end{lemma}
\begin{proof}
The long exact sequence of Tate cohomology groups associated  
with the exact sequence 
$$ 0 \longrightarrow V(N_\lambda(E))\longrightarrow V
\longrightarrow V_{N_\lambda(E)}\longrightarrow 0, $$
is equal to :
$$ 
0\rightarrow \widehat{H}^0(V(N_\lambda(E)))
\xrightarrow{\phi}\widehat{H}^0(V)\rightarrow
\widehat{H}^0(V_{N_\lambda(E)}) \rightarrow
\widehat{H}^1(V(N_\lambda(E)))\rightarrow
\widehat{H}^1(V)\rightarrow 0.
$$ 
Using Lemma \ref{jacquet-langlands-tate}, we get that
$\phi\big(\widehat{H}^0(V(N_\lambda(E)))\big)$ is equal to
$\widehat{H}^0(V)(N_\lambda(F))$, and therefore the
$N_\lambda(F)$-coinvariants $\widehat{H}^0(V)_{N_\lambda(F)}$ is
isomorphic to a subrepresentation of
$\widehat{H}^0(V_{N_\lambda(E)})$. Since
$\widehat{H}^0(V)_{N_\lambda(F)}$ is non-zero, the lemma follows from
the irreducibility of $\widehat{H}^0(V_{N_\lambda(E)})$.
\end{proof}
Using similar ideas, we can prove that zeroth Tate cohomology of a
generic representation has a unique generic subquotient. For any
integral $l$-adic generic representation $(\pi, V)$ of $G_n(K)$,
defined over $\mathcal{K}$, we recall that
$\mathbb{W}_\Lambda(\pi, \psi_K)$ denotes the set of functions in
$\mathbb{W}(\pi_E, \psi_E)$ with values in $\Lambda$. The
$\Lambda$-module $\mathbb{W}_\Lambda(\pi, \psi_K)$ gives a $\Lambda$
structure of $\pi$ (see \ref{vigneras_whittaker_lattice}).
	
\begin{proposition}\label{tate_is_generic}\label{Tate_integral_generic}
  Let $\pi_E$ be an $l$-modular generic representation (or an integral
  $l$-adic generic representation defined over $\mathcal{K}$) of
  $G_n(E)$. Assume that $\pi_E$ is stable under the action of
  $\Gamma$. Then there exists a unique generic subquotient of the
  $G_n(F)$ representation $\widehat{H}^0(\mathbb{W}(\pi_E, \psi_E))$
  (resp.  $\widehat{H}^0(\mathbb{W}_\Lambda(\pi_E, \psi_E))$).
\end{proposition}
\begin{proof}
Let $\mathcal{W}$ be the Whittaker space
$\mathbb{W}(\pi_E, \overline{\psi}_E)$
( resp. $\mathbb{W}_\Lambda(\pi_E, \psi_E)$). Let
$\mathcal{W}(N_n(E), \Theta_E)$ be the $\overline{\mathbb{F}}_l$
(resp. $\Lambda)$-span of the vectors of the form
$\pi_E(n)v-\overline{\Theta}_E(n)v$
(resp. $\pi_E(n)v-\Theta_E(n)v$), for all $v\in \mathcal{W}$ and
$n\in N_n(E)$. We have the following exact sequence :
$$
0\rightarrow \mathcal{W}(N_n(E),
\Theta_E)\rightarrow 
\mathcal{W}\rightarrow \mathcal{W}_{N_n(E),
\Theta_E}\rightarrow 0.
$$ 
The space $\mathcal{W}_{N_n(E), \Theta_E}$ is a one dimensional vector
space over $\overline{\mathbb{F}}_l$ (resp. a free $\Lambda$-module of
rank one). The long exact sequence in the Tate cohomology gives us
$$
\widehat{H}^0(\mathcal{W}(N_n(E),\Theta_E))\xrightarrow{f}
\widehat{H}^0(\mathcal{W})\xrightarrow{g}
\widehat{H}^0(\mathcal{W}_{N_n(E),\Theta_E})\rightarrow
\widehat{H}^1(\mathcal{W}(N_n(E),\Theta_E))\rightarrow
\widehat{H}^1(\mathcal{W}).
$$ 
Using arguments of Lemma \ref{jacquet-langlands-tate}, the image of
the morphism $f$ is equal to
$\widehat{H}^0(\mathcal{W})(N_n(F),\Theta_F^l)$. The Tate cohomology
of the Kirillov model $\mathbb{K}(\pi_E,\psi_E)$ contains
$\mathcal{K}(\psi_F^l)$ as $P_n(F)$ subrepresentation (see
\ref{main_proof_first_part}). Since $\Gamma$ acts trivially on
$\mathcal{W}_{N_n(E), \Theta_E}$, the Tate cohomology space
$\widehat{H}^0(\mathcal{W}_{N_n(E),\Theta_E})$ is a one dimensional
vector space over $\overline{\mathbb{F}}_l$. Hence, the map $g$
induces the isomorphism:
$$
\widehat{H}^0(\mathcal{W})_{N_n(F),\Theta_F^l}\simeq 
\widehat{H}^0(\mathcal{W}_{N_n(E),\Theta_E}).
$$ 
Now, it follows from the exactness of the Jacquet functor and
Proposition \ref{finiteness}, that
$\widehat{H}^0(\mathcal{W})$ admits a unique generic subquotient.
\end{proof}
\begin{remark}\normalfont
  The above lemmas will be used to compute the Tate cohomology of the
  base change of mod-$l$ the Zelevinsky sub-representation
  $Z(\Delta)$. The Jacquet functor of $Z(\Delta)$ with respect to the
  parabolic subgroup of type $(n/k, n/k,\dots, n/k)$, where $k$ is the
  length of the segment $\Delta$, is an $l$-modular cuspidal
  representation and the hypothesis in Lemma \ref{Jaquet_Tate} are
  applicable. The precise definitions will be recalled in the next
  section.
\end{remark}
\subsection{The \texorpdfstring{${\rm GL}_2$}{} case}
\begin{theorem}\label{n=2_thm}
Let $F$ be a finite extension of $\mathbb{Q}_p$, and let $E$ be a
finite Galois extension of $F$ with $[E:F] =l$. Assume that $l$ and
$p$ are distinct odd primes. Let $\pi_F$ be an integral $l$-adic
cuspidal representations of $G_2(F)$ and let $\pi_E$ be the
representation of $G_2(E)$ such that $\pi_E$ is the base change of
$\pi_F$. Then
\begin{center}
$\widehat{H}^0(r_l(\pi_E))\simeq r_l(\pi_F)^{(l)}$.
\end{center}
\end{theorem}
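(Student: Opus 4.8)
The plan is to realise both sides concretely on the space $C_c^\infty(F^\times, \overline{\mathbb{F}}_l)$ using Kirillov models, and then reduce the isomorphism to an identity of local constants which follows from the local Langlands correspondence. First I would fix additive characters $\psi_F$ of $F$ and $\psi_E = \psi_F \circ \mathrm{Tr}_{E/F}$ of $E$, together with the corresponding Whittaker data. Since $\pi_E$ is the base change of $\pi_F$, it is $\Gamma$-equivariant, so by Lemma \ref{inv_whit_model} the Whittaker model $\mathbb{W}(\pi_E,\psi_E)$ — and hence the integral lattice $\mathbb{W}^0(\pi_E,\psi_E)$, which by \cite[Theorem 2]{MR2058628} is a $G_2(E)$-invariant lattice — is stable under the operator $T_\gamma$. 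Because $\pi_F$ is cuspidal, the Kirillov model of $\pi_E$ equals $\mathcal{K}(\psi_E) = \mathrm{ind}_{N_2(E)}^{P_2(E)}(\psi_E)$, which as a sheaf on $E^\times$ is the constant $\overline{\mathbb{Z}}_l$-sheaf; and by \cite[Corollary 4.3]{matringe2022kirillov} restriction $\mathbb{W}^0(\pi_E,\psi_E) \to \mathbb{K}^0(\pi_E,\psi_E) = \mathcal{K}^0(\psi_E)$ is a bijection of lattices.

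The next step is to compute $\widehat{H}^0$ of this lattice as a $P_2(F)$-module. Applying Proposition \ref{tr} with $X = E^\times$ and $X^\gamma = F^\times$ (noting that $(E^\times)^\Gamma = F^\times$ and $\Gamma$ acts freely on the complement), the restriction map induces an isomorphism of $P_2(F)$-modules $\widehat{H}^0\big(\mathcal{K}^0(\psi_E)\big) \simeq \mathcal{K}(\overline{\psi}_F^{\,l}) = \mathrm{ind}_{N_2(F)}^{P_2(F)}(\overline{\psi}_F^{\,l})$; this is exactly the discussion in \ref{Image} specialised to $m = 2$. On the other hand, $r_l(\pi_F)$ is an irreducible cuspidal $l$-modular representation of $G_2(F)$, and its Frobenius twist $r_l(\pi_F)^{(l)}$ has Kirillov model on $C_c^\infty(F^\times,\overline{\mathbb{F}}_l)$ whose restriction to $P_2(F)$ is $J_{\psi_F^l} = \mathrm{ind}_{N_2(F)}^{P_2(F)}(\psi_F^l)$, by the $n=2$ Kirillov theory recalled in Section \ref{x}. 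Thus both $\widehat{H}^0(\pi_E)$ and $r_l(\pi_F)^{(l)}$ restrict to the \emph{same} irreducible $P_2(F)$-representation. Since an irreducible cuspidal representation of $G_2(F)$ is determined by its restriction to $P_2(F)$ together with its central character — equivalently, by the Kirillov model together with the single operator $\mathbb{K}_{\psi}^\pi(w)$ — it remains to match the action of $w = \begin{pmatrix} 0 & 1 \\ -1 & 0 \end{pmatrix}$.

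The heart of the argument is therefore to verify, for the functions $\xi\{\chi,k\}$, that the operator $\widehat{H}^0(\pi_E)(w)$ acts by the same formula as $r_l(\pi_F)^{(l)}(w)$, i.e.
\begin{equation*}
	\widehat{H}^0(\pi_E)(w)\,\xi\{\chi,k\} = \epsilon\big(\chi^{-1} r_l(\pi_F)^{(l)}, \psi_F^l\big)\, \xi\big\{\chi^{-1}\varpi, -n(\chi^{-1}r_l(\pi_F)^{(l)},\psi_F^l) - k\big\}.
\end{equation*}
To compute the left side, I would take the $G_2(E)$-action of $\pi_E(w_2)$ on $\mathcal{K}^0(\psi_E)$, which on $\xi\{\chi_E, k\}$ is governed by $\epsilon(\chi_E^{-1}\pi_E,\psi_E)$; passing to $\Gamma$-invariants and reducing mod $l$, the relevant $\epsilon$-factor that survives is the one attached to characters $\chi_E = \chi \circ \mathrm{N}_{E/F}$ obtained by base change, and one uses the inductivity of $\epsilon$-factors (relation \eqref{60}), Lemma \ref{i} to discard the nuisance constant $\mathcal{C}_{E/F}(\psi_F)$ since $\dim$ is even, and Proposition \ref{62} for compatibility with reduction mod $l$, to rewrite $r_l\big(\epsilon(\chi_E^{-1}\pi_E,\psi_E)\big)$ in terms of $\epsilon\big(X, r_l(\pi_F)^{(l)} \otimes \chi^{-1}, \psi_F^l\big)$. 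Here the Frobenius twist on the representation side corresponds on the Galois side to the fact that $\widehat{H}^0$ of a $\Gamma$-equivariant object produces $\mathrm{Res}_{\mathcal{W}_F}$ composed with the $l$-th power map, which is precisely the effect recorded by $r_l(\pi_F)^{(l)}$ via the $l$-adic local Langlands correspondence together with the defining property $\mathrm{Res}_{\mathcal{W}_E}\Pi_E(\pi_E) \simeq \Pi_F(\pi_F)$. The main obstacle — and the step requiring the most care — is exactly this bookkeeping: tracking how the Gauss-sum–type $\epsilon$-factors transform under the three simultaneous operations of (i) restricting the cocycle to the fixed points $F^\times \subset E^\times$, (ii) reducing mod $l$, and (iii) the Frobenius twist, and checking that the powers of $q$, the shifts in the valuation index $k$, and the central characters all line up; the auxiliary Lemmas \ref{i} and \ref{ar} and the identity \eqref{int_2} are designed precisely to absorb the discrepancies that appear. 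Once the $w$-operator matches, the two irreducible cuspidal representations of $G_2(F)$ agree, giving $\widehat{H}^0(\pi_E) \simeq r_l(\pi_F)^{(l)}$.
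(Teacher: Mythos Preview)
Your proposal is correct and follows essentially the same route as the paper: identify $\widehat{H}^0$ of the Kirillov lattice with $C_c^\infty(F^\times,\overline{\mathbb{F}}_l)$ via Proposition~\ref{tr}, then match the $w$-operator on the basis $\xi\{\chi,k\}$ using the explicit Kirillov formula together with Lemma~\ref{i}, base change, and the mod-$l$ compatibility of $\epsilon$-factors. The only point where you should be more careful is the lifting of characters: the $\Gamma$-invariant lift of $\xi\{\chi,0\}$ to $E^\times$ involves $\widetilde{\chi}_0=\chi_0\circ N_{E/F}$ with $\chi_0^l=\chi$ (since $N_{E/F}$ restricted to $F^\times$ is the $l$-th power), not $\chi\circ N_{E/F}$; this is exactly the bookkeeping you flagged, and the identity \eqref{int_2} you cite is in fact not needed for $n=2$.
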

\begin{proof}
First note that the base change lift $\pi_E$ is cuspidal, and hence
the mod-$l$ reduction $r_l(\pi_E)$ is also cuspidal. Let $\psi_E$
and $\psi_F$ be defined as in subsection (\ref{sr}). Let
$\big(\mathbb{K}_{\overline{\psi}_E}^{r_l(\pi_E)},
C_c^\infty(E^\times,\mathbb{\overline{F}}_l)\big)$ be a Kirillov
model of $r_l(\pi_E)$. Recall that the group $\Gamma$ acts on
$C_c^\infty(E^\times,\mathbb{\overline{F}}_l)$. We denote by
$\widehat{H}^0(r_l(\pi_E))$ the cohomology group
$\widehat{H}^0\big(C_c^\infty(E^\times,
\mathbb{\overline{F}}_l)\big)$. Then, using Proposition \ref{tr}, 
we have
\begin{center}
$\widehat{H}^0(r_l(\pi_E)) \simeq
C_c^\infty(F^\times,\mathbb{\overline{F}}_l)$.
\end{center}
The space $\widehat{H}^0(r_l(\pi_E))$ is isomorphic to
${\rm ind}_{N_2(F)}^{P_2(F)}(\overline{\psi}_F^l)$ as a representation
of $P_2(F)$, where $\overline{\psi}_F$ is the mod-$l$ reduction of
$\psi_F$; and the induced action of the operator
$\mathbb{K}_{\overline{\psi}_E}^{r_l(\pi_E)}(w)$ on
$\widehat{H}^0(r_l(\pi_E))$ is denoted by
$\overline{\mathbb{K}_{\overline{\psi}_E}^{r_l(\pi_E)}(w)}$. The
theorem now follows from the following claim.
\begin{claim}\normalfont
$\overline{\mathbb{K}_{\overline{\psi}_E}^{r_l(\pi_E)}(w)}(f)
= \mathbb{K}_{\overline{\psi}_F^l}^{r_l(\pi_F)^{(l)}}(w)(f)$, 
for all $f\in C_c^\infty(F^\times,\mathbb{\overline{F}}_l)$.
\end{claim}
Now, for a function
$f\in C_c^\infty(F^\times,\mathbb{\overline{F}}_l)$, any covering of
${\rm supp}(f)$ by open subsets of $F^\times$ has a finite refinement
of pairwise disjoint open compact subgroups of $F^\times$. So we may
assume that ${\rm supp}(f) \subseteq \varpi^rxU_F^1$, where
$r\in \mathbb{Z}$, $\varpi_F$ is an uniformizer of $F$ and $x$ is a
unit in $(\mathfrak{o}_F/\mathfrak{p}_F)^\times$ embedded in
$F^\times$. Therefore it is sufficient to prove the claim for
functions $f\in C_c^\infty(F^\times,\mathbb{\overline{F}}_l)$ with
${\rm supp}(f) \subseteq U_F^1$, and we have
$$
f = c_{\chi_F}\sum_{\chi_F\in\widehat{U_F^1}}\xi\{\chi_F,0\},
$$
where $c_{\chi_F}\in\overline{\mathbb{F}}_l$ and
$\widehat{U_F^1}$ is the set of smooth characters of
$$ F^\times=\langle\varpi_F\rangle\times k_F^\times\times U_F^1 $$ 
which are trivial on $k_F^\times$ and
$\varpi_F$. We now prove the claim for the function
$\xi\{\chi_F,0\}$ for
$\chi_F\in\widehat{U_F^1}$. There exists a character $\chi_0\in
\widehat{U_F^1}$ such that $\chi_0^l=\chi_F$. Let
$\widetilde{\chi}_0$ be the $l$-adic lift of the character
$\chi_0$. Define the characters $\chi_E$ and $\widetilde{\chi}_E$ 
of $E^\times$ as follows
$$ \chi_E(x) = \chi_0({\rm Nr}_{E/F}(x)) $$
and
$$ \widetilde{\chi}_E(x) = \widetilde{\chi}_0
({\rm Nr}_{E/F}(x)), $$
for $x \in E^\times$. Here, ${\rm Nr}_{E/F}:E^\times\rightarrow
F^\times$ denotes the norm map. Note that
$\widetilde{\chi}_E$ and $\chi_E$ extends the character
$\chi_F$. We have the following relations :
\begin{equation}\label{kirillov_equ_1}
\overline{\mathbb{K}_{\overline{\psi}_E}^{r_l(\pi_E)}(w)}
\big(\xi\{\chi_F,0\}\big) =
\epsilon(\chi_E^{-1}r_l(\pi_E),\overline{\psi}_E)
\xi\bigg\{\chi_F,\cfrac{-n(\chi_E^{-1}
r_l(\pi_E),\overline{\psi}_E)}{e}\bigg\}
\end{equation}
and
\begin{equation}\label{kirillov_equ_2}
\mathbb{K}_{\overline{\psi}_F^l}^{r_l(\pi_F)^{(l)}}(w)
\big(\xi\{\chi_F,0\}\big) = \epsilon(\chi_F^{-1}r_l(\pi_F)^{(l)},\overline{\psi}_F^l)
\xi\big\{\chi_F,-n(\chi_F^{-1}r_l(\pi_F)^{(l)},
\overline{\psi}_F^l)\big\},
\end{equation}
where $e$ denotes the ramification index of the extension
$E/F$. Next, we aim to prove the following identity:
$$ \epsilon(\chi_E^{-1}r_l(\pi_E),\overline{\psi}_E) =
\epsilon(X,\chi_F^{-1}r_l(\pi_F)^{(l)},\overline{\psi}_F^l). $$ It
follows from Theorem \ref{k} that the $\epsilon$-factor is same as the
$\gamma$-factor in both $l$-adic and mod-$l$ cases.  Now, using the
identity in \cite[Proposition 6.9]{MR1007299}, we get
\begin{equation}\label{base_change_gamma_identity}
\epsilon(X,\widetilde{\chi}_E^{-1}\pi_E,\psi_E) = 
\prod_{\eta} \epsilon(X,\widetilde{\chi}_0^{-1}\pi_F
\otimes\eta,\psi_F),
\end{equation} 
where $\eta$ runs over all the characters of the group
$F^\times/{\rm Nr}_{E/F}(E^\times)$-- which is isomorphic to
${\rm Gal}(E/F)$ via local class field theory. Using the identity
(\ref{gamma_factor_generic_part}), we have
$$ r_l\big(\epsilon(X,\widetilde{\chi}_E^{-1}\pi_E,\psi_E)\big) = 
\epsilon(X,\chi_E^{-1}r_l(\pi_E),\overline{\psi}_E) $$
and
$$ r_l\big(\epsilon(X,\widetilde{\chi}_0^{-1}\pi_F
\otimes\eta,\psi_F)\big) =
\epsilon(X,\chi_0^{-1}r_l(\pi_F),\overline{\psi}_F), $$ for each
character $\eta$. Using the above identities and taking mod-$l$
reduction to (\ref{base_change_gamma_identity}), we get
$$ \epsilon(X,\chi_E^{-1}r_l(\pi_E),\overline{\psi}_E) = 
\epsilon(X,\chi_0^{-1}r_l(\pi_F),\overline{\psi}_F)^l. $$ 
Using Lemma \ref{ar}, we obtain the following identity
$$ \epsilon(X,\chi_E^{-1}r_l(\pi_E),\overline{\psi}_E) =
\epsilon(X^l,\chi_F^{-1} r_l(\pi_F)^{(l)},\overline{\psi}_F^l). 
$$
Now, using the identity (\ref{degree}) and comparing
the degree of $X$ from above relation, we get
$$
\cfrac{n(\chi_E^{-1}r_l(\pi_E), \overline{\psi}_E)}{e} =
n(\chi_F^{-1}r_l(\pi_F)^{(l)},
\overline{\psi}_F^l)
$$
and 
$$
\epsilon(\chi_E^{-1}r_l(\pi_E),\overline{\psi}_F) =
\epsilon(\chi_F^{-1} r_l(\pi_F)^{(l)},\overline{\psi}_F^l).
$$
Thus it follows from (\ref{kirillov_equ_1}) and 
(\ref{kirillov_equ_2}) that
$$
\overline{\mathbb{K}_{\overline{\psi}_E}^{r_l(\pi_E)}(w)}
\big(\xi\{\chi_F,0\}\big) = \mathbb{K}_{\overline{\psi}_F^l}^{r_l(\pi_F)^{(l)}}(w)
\big(\xi\{\chi_F,0\}\big).
$$
Hence we prove the claim, and the theorem follows.
\end{proof}
\subsection{}
Our main result uses the following lemma which is the analogue
of completeness of Whittaker models in the complex case.
\begin{lemma}\label{van}
Assume that $l$ does not divide
$|G_n(k_K)|$ and let $\overline{\psi}_K$ be the
mod-$l$ reduction of $\psi_K$. Let
$\overline{\Theta}_K$ be the non-degenerate character of
$N_n(K)$ associated with
$\overline{\psi}_K$ (see Section \ref{rr}). Let $\phi\in{\rm
ind}_{N_n(K)}^{G_n(K)}(\overline{\Theta}_K)$. If
$$ \int_{N_n(K)\setminus G_n(K)}\phi(t)W(t)\,dt=0, $$
for all $W \in \mathbb{W}(\sigma,
\overline{\psi}_K^{-1})$ and for all generic representations
$\sigma$ of $G_n(K)$, then $\phi = 0$.
\end{lemma}
\begin{proof}
  Suppose $\phi$ is non-zero. Let
  ${\rm Rep}_{W(\overline{\mathbb{F}}_l)}(G_n(K))$ be the category of
  smooth $W(\overline{\mathbb{F}}_l)[G_n(K)]$-modules, and let
  $\mathcal{Z}_n$ be its center. Let $W_n$ be the smooth
  $W(\overline{\mathbb{F}}_l)[G_n(K)]$-module
  ${\rm ind}_{N_n(K)}^{G_n(K)}(\Theta_K)$. Recall that for any
  primitive idempotent $e$ in $\mathcal{Z}_n$, the space $eW_n$ is a
  smooth co-Whittaker $e\mathcal{Z}_n[G_n(K)]$-module (see
  \cite[Theorem 6.3]{Helm_integral_bernstein_center}). According to
  \cite[Corollary 4.3]{MR4335904}, there exists a primitive idempotent
  $e'$ of $\mathcal{Z}_n$ and an element
  $U\in \mathbb{W}(e'W_n\otimes_{W(\overline{\mathbb{F}}_l)}
  \overline{\mathbb{F}}_l, \overline{\psi}_K^{-1})$ such that the
  integral
$$
\langle \phi, U\rangle := \int_{N_n(K)\setminus G_n(K)}\phi(t)
\otimes U(t)\,dt
$$
is non-zero in
$\overline{\mathbb{F}}_l\otimes_{W(\overline{\mathbb{F}}_l)}
e'\mathcal{Z}_n$. As described in \cite{MR3508741}, the primitive
idempotent $e'$ corresponds to an inertial equivalence class of pairs
$(M,\pi)$, where $M$ is a Levi subgroup of $G_n(K)$ and $\pi$ is a
supercuspidal $\overline{\mathbb{F}}_l$-representation of $M$. Let
$R'$ denote the ring
$\overline{\mathbb{F}}_l\otimes_{W(\overline{\mathbb{F}}_l)}
e'\mathcal{Z}_n$.
		
For the inertial equivalence class $[M,\pi]$, consider the subcategory
${\rm Rep}_{W(\overline{\mathbb{F}}_l)}(G_n(K))_{[M,\pi]}$, consisting
of objects $\Pi$ in ${\rm Rep}_{W(\overline{\mathbb{F}}_l)}(G_n(K))$
whose irreducible sub-quotients have mod-$l$ inertial supercuspidal
support $[M,\pi]$. Let $A_{[M,\pi]}$ denote the center of the
subcategory
${\rm Rep}_{W(\overline{\mathbb{F}}_l)}(G_n(K))_{[M,\pi]}$. Since $l$
does not divide $\lvert G_n(k_K)\rvert$, it follows from \cite[Example
13.9]{MR3508741} that
$$ A_{[M,\pi]} = C_{[M,\pi]}, $$ 
where $C_{[M,\pi]}$ is a $W(\overline{\mathbb{F}}_l)$-subalgebra of
$A_{[M,\pi]}$, as defined in \cite[Theorem 12.5]{MR3508741}. Then,
there is an isomorphism of
$C_{[M,\pi]}\otimes_{W(\overline{\mathbb{F}}_l)}
\overline{\mathbb{F}}_l$ with the reduced quotient of
$A_{[M,\pi]}\otimes_{\Lambda} \overline{\mathbb{F}}_l$ (see
\cite[Corollary 12.13]{MR3508741}), and hence we get that the
$W(\overline{\mathbb{F}}_l)$-algebra $R'$ is reduced. In particular,
the element $\langle \phi,U\rangle$ is not nilpotent. Therefore, the
basic open set $D(\langle \phi,U\rangle)$ is non-empty, and hence
intersects the dense set of closed points of the affine
$W(\overline{\mathbb{F}}_l)$-scheme associated with $R'$. This implies
that there exists a map $f : R'\rightarrow \overline{\mathbb{F}}_l$
such that the image of $\langle \phi, U\rangle$ under $f$, which is
equal to
$$
\int_{N_n(K)\setminus G_n(K)}\phi(t)W_0(t)\,dt
$$ 
for some
$W_0\in\mathbb{W}\big(e'W_n \otimes_{R', f}
\overline{\mathbb{F}}_l,\overline{\psi}_K^{-1}\big)$, is non-zero in
$\overline{\mathbb{F}}_l$. Note that
$e'W_n\otimes_{R', f}\overline{\mathbb{F}}_l$, as
$\overline{\mathbb{F}}_l$-representation, admits a generic quotient
with same Whittaker space. Hence, the lemma.
\end{proof}
	
\subsection{The general case}
Let $\pi_F$ be an integral generic $l$-adic representation of
$G_n(F)$, and let $\pi_E$ be the base change lifting of $\pi_F$ to
$G_n(E)$. We observe that the unique generic component $J_l(\pi_E)$ of
the mod-$l$ reduction of $\pi_E$ is stable under the action of
$\Gamma$. We will now prove the main theorem of our article.
\begin{theorem}\label{noncuspidal_n_thm}
Let $F$ be a finite extension of $\mathbb{Q}_p$, and let $E$ be a
finite Galois extension of $F$ with $[E:F] =l$, where $p$ and $l$
are distinct primes such that $l$ does not divide
$|G_{n-1}(k_F)|$. Let $\pi_F$ be an integral $l$-adic generic
representation of $G_n(F)$ with $J_l(\pi_F)$, the unique generic
component of the mod-$l$ reduction of $\pi_F$. Let $\pi_E$ be the
base change lift of $\pi_F$. Then, the representation
$J_l(\pi_F)^{(l)}$ is the unique generic sub-quotient of
$\widehat{H}^0(J_l(\pi_E))$.
\end{theorem}
\begin{proof}
  We begin with a summary of the proof. We prove the above theorem
  using induction on the integer $n$. The proof is divided into four
  parts. In the first part, we isolate a subspace
  $\mathcal{M}(\pi_F, \psi_F)$ of the Tate cohomology of the Kirillov
  model of $J_l(\pi_E)$ which will eventually give $J_l(\pi_F)^{(l)}$
  as a quotient. In the second part, we will set up comparison of Zeta
  integrals on homogeneous spaces of $F$ with those on homogeneous
  spaces of $E$. In the third part we reduce the theorem to an
  identity of local $\gamma$-factors. In the fourth-part we deal with
  these local $\gamma$-factor identities and we show that
  $\mathcal{M}(\pi_F, \psi_F)$ is stable under the action of
  $G_n(F)$. At the end of the fourth part, we get a natural onto map
  from $\mathcal{M}(\pi_F, \psi_F)$ to the mod-$l$ Kirillov model
  $\mathbb{K}(J_l(\pi_F)^{(l)}, \overline{\psi}_F^l)$ as $G_n(F)$
  representations.
\subsubsection{}\label{main_proof_first_part}
Notations on Whittaker and Kirillov models are defined in the
subsections (\ref{whittaker_recap}) and
(\ref{kirrilov_model}). Consider the Whittaker model
$\mathbb{W}(J_l(\pi_E), \overline{\psi}_E)$ of $J_l(\pi_E)$. The
restriction map $W\mapsto{\rm res}_{P_n(E)}(W)$ is an isomorphism from
$\mathbb{W}(J_l(\pi_E), \overline{\psi}_E)$ onto
$\mathbb{K}(J_l(\pi_E),\overline{\psi}_E)$ (see \cite[Theorem
4.2]{matringe2022kirillov}). Recall that
$\mathcal{K}(\overline{\psi}_E)$ denotes the compactly induced
representation ${\rm ind}_{N_n(E)}^{P_n(E)}\overline{\Theta}_E$. Note
that $\mathcal{K}(\overline{\psi}_E)$ is contained in
$\mathbb{K}(J_l(\pi_E),\overline{\psi}_E)$.  Let $I_n$ be the
following natural map:
$$ I_n:\widehat{H}^0(\mathcal{K}(\overline{\psi}_E)) \longrightarrow
\widehat{H}^0(\mathbb{K}(J_l(\pi_E),\overline{\psi}_E)). $$ Let
$\Phi_n:\mathbb{K}(J_l(\pi_E), \overline{\psi}_E)^\Gamma \rightarrow
{\rm Ind}_{N_{n}(F)}^{P_n(F)}\overline{\Theta}_F^l$ be the restriction
to $P_n(F)$ map. Note that the map $\Phi_n$ factorizes through
$$ \Phi_n:\widehat{H}^0(\mathbb{K}(J_l(\pi_E), \overline{\psi}_E))\longrightarrow
{\rm Ind}_{N_{n}(F)}^{P_n(F)}\overline{\Theta}_F^l. $$ The composition
$\Phi_n\circ I_n$ is induced by the restriction to $P_n(F)$ map from
$\mathcal{K}(\overline{\psi}_E)^\Gamma$ to
$\mathcal{K}(\overline{\psi}_F^l)$, and hence, $\Phi_n\circ I_n$ is an
isomorphism onto the space $\mathcal{K}(\overline{\psi}_F^l)$ by
Proposition \ref{tr} (see Subsection \ref{section_kirrilov_rep}).
This implies that the image of $\Phi_n$ contains
$\mathcal{K}(\overline{\psi}_F^l)$. Let $\mathcal{M}(\pi_F, \psi_F)$
be the space
$\Phi_n^{-1}(\mathbb{K}(J_l(\pi_F)^{(l)}, \overline{\psi}_F^l))$.  The
space $\mathcal{M}(\pi_F, \psi_F)$ is a non-zero $P_n(F)$
sub-representation of
$\widehat{H}^0(\mathbb{K}(J_l(\pi_E),\overline{\psi}_E))$, and the map
$$ \Phi_n: \mathcal{M}(\pi_F, \psi_F)\longrightarrow
\mathbb{K}(J_l(\pi_F)^{(l)}, \overline{\psi}_F^l) $$ is
non-zero. Then, using induction on $n$, we will show that the space
$\mathcal{M}(\pi_F, \psi_F)$ is stable under the action of $G_n(F)$
and the map $\Phi_n$ is $G_n(F)$-equivariant.
\subsubsection{}
Let $\overline{J_l(\pi_E)}(w_n)$ be the induced action of
$J_l(\pi_E)(w_n)$ on the space
$\widehat{H}^0(\mathbb{K}(J_l(\pi_E), \overline{\psi}_E))$. Let $V$ be
an element in $\mathcal{M}(\pi_F, \psi_F)$. Then there exists
$W\in\mathbb{W}(J_l(\pi_E),\overline{\psi}_E)^\Gamma$ such that $W$ is
mapped to $V$ under the map
\begin{equation}\label{75}
\mathbb{W}(J_l(\pi_E),\overline{\psi}_E)^\Gamma
\longrightarrow\mathbb{K}(J_l(\pi_E),\overline{\psi}_E)^\Gamma
\longrightarrow \widehat{H}^0\big(\mathbb{K}(J_l(\pi_E), \overline{\psi}_E)\big).
\end{equation}
Let $\overline{\sigma}_F$ be an arbitrary $l$-modular generic
representation of $G_{n-1}(F)$, and let $\sigma_F$ be its $l$-adic
lift. In this case, the generic mod-$l$ representation $J_l(\sigma_F)$
is equal to $\overline{\sigma}_F$. Let $\sigma_E$ be an $l$-adic
generic representation of $G_{n-1}(E)$ obtained as a base change of
$\sigma_F$. Note that the map
$$ \tilde{\Phi}_{n-1}: \widehat{H}^0(\mathbb{W}(J_l(\sigma_E),
\overline{\psi}_E^{-1})) \longrightarrow {\rm
  Ind}_{N_{n-1}(F)}^{G_{n-1}(F)} \overline{\Theta}_F^{-l}$$ is
non-zero. Here, $\tilde{\Phi}_{n-1}$ is the restriction to
$G_{n-1}(F)$ map on the space
${\rm Ind}_{N_{n-1}(E)}^{G_{n-1}(E)}\overline{\Theta}_E$. Assuming the
induction hypothesis for $n-1$ and using the fact that the
representation
$\widehat{H}^0(\mathbb{W}(J_l(\sigma_E), \overline{\psi}_E^{-1}))$ has
a unique generic subquotient (Proposition \ref{tate_is_generic}), the
image of $\tilde{\Phi}_{n-1}$ contains
$\mathbb{W}(\overline{\sigma}_F^{(l)}, \overline{\psi}_F^{-l})$.
Thus, for any
$W'\in \mathbb{W}(\overline{\sigma}_F^{(l)}, \overline{\psi}_F^{-l})$,
there exists an element
$\mathcal{S} \in
\mathbb{W}(J_l(\sigma_E),\overline{\psi}_E^{-1})^\Gamma$ such that
$\tilde{\Phi}_{n-1}(\mathcal{S}) = W'$ and
$$ \tilde{\Phi}_{n-1}\big(J_l(\sigma_E)(w_{n-1})S\big) =
\overline{\sigma}_F^{(l)}(w_{n-1})W'. $$ 
Now the functional equation in
(\ref{w}) gives the following relation :
\begin{equation}\label{func_equ_1}
\sum_{r\in\mathbb{Z}}c^E_{r}\big(\widetilde{W},\widetilde
{\mathcal{S}}\big)q_F^{-\frac{r}{2}f}X^{-fr} = 
\omega_{J_l(\sigma_E)}(-1)^{n-2}
\gamma\big(X, J_l(\pi_E), J_l(\sigma_E), \psi_E\big)
\sum_{r \in\mathbb{Z}}c^E_r(W,\mathcal{S}) q_F^{\frac{r}{2}f}X^{fr},
\end{equation}
where $f$ denotes the residue degree of the extension $E/F$. 
Note that $\omega_{\sigma_E}(-1)=\omega_{\sigma_F}(-1)$ as
$l$ is an odd prime. Applying Proposition \ref{1}, we get
\begin{equation}\label{integral_lift}
\begin{split}
&\int_{(X_E^r)^\Gamma}W
\begin{pmatrix}
	g & 0\\
	0 & 1
\end{pmatrix}
\mathcal{S}(g)dg  = \int_{X_F^{\frac{r}{e}}}W
\begin{pmatrix}
	g & 0\\
	0 & 1
\end{pmatrix}
\mathcal{S}(g)dg,\\
\\
&\int_{(X_E^r)^\Gamma}\widetilde{W}
\begin{pmatrix}
	g & 0\\
	0 & 1
\end{pmatrix}
\widetilde{\mathcal{S}}(g)dg =
\int_{X_F^{\frac{r}{e}}}\widetilde{W}
\begin{pmatrix}
	g & 0\\
	0 & 1
\end{pmatrix}
\widetilde{\mathcal{S}}(g)dg,    
\end{split}
\end{equation}
for each $r\in \mathbb{Z}$. Using the above equalities and Remark
\ref{rmk_int2}, the functional equation (\ref{func_equ_1}) becomes
$$ \sum_{r \in \mathbb{Z}}
c^F_r(\widetilde{W},\widetilde{\mathcal{S}})
q_F^{-\frac{r}{2}}X^{-efr}
= \omega_{J_l(\sigma_F)}(-1)^{n-2}
\gamma(X, J_l(\pi_E), J_l(\sigma_E), \overline{\psi}_E)
\sum_{r \in\mathbb{Z}}
c^F_r(W,\mathcal{S}) q_F^{\frac{r}{2}}X^{efr}. $$
Using the modification as in (\ref{int_2}), the above 
equality becomes
\begin{equation}\label{FE_final_1}
\begin{split}
\sum_{r\in\mathbb{Z}} c^F_{-r}\big(\overline{J_l(\pi_E)}
(w_n)W, &\overline{\sigma}_F^{(l)}(w_{n-1})W'\big)
q_F^{-\frac{r}{2}}X^{-lr} =\\
&\omega_{J_l(\sigma_F)}(-1)^{n-2}
\gamma(X,J_l(\pi_E), J_l(\sigma_E),\overline{\psi}_E)
\sum_{r\in \mathbb{Z}} c^F_r(W,W') q_F^{\frac{r}{2}}X^{lr}.
\end{split}  
\end{equation}
		
\subsubsection{}
For any $V\in \mathcal{M}(\pi_F, \psi_F)$, we show that
\begin{equation}\label{TS}
\Phi_n(\overline{J_l(\pi_E)}(w_n)V)=
J_l(\pi_F)^{(l)}(w_n)\Phi_n(V).
\end{equation}
Let $U$ be an element of
$\mathbb{W}(J_l(\pi_F)^{(l)}, \overline{\psi}_F^l)$ such that
${\rm res}_{P_n(F)}(U)$ is equal to $\Phi_n(V)$. By Lemma \ref{van},
the assertion (\ref{TS}) is equivalent to the following equality :
\begin{equation}\label{Claim}
\begin{split}
\sum_{r\in\mathbb{Z}}c^F_{-r}\big(\overline{J_l(\pi_E)}(w_n)W,
&\overline{\sigma}_F^{(l)}(w_{n-1})W'\big)q_F^{-r/2}X^{-r}=\\
&\sum_{r\in\mathbb{Z}}
c^F_{-r}\big(J_l(\pi_F)^{(l)}(w_n)U,\overline{\sigma}_F^{(l)}
(w_{n-1})W'\big)q_F^{-r/2}X^{-r}, 
\end{split}
\end{equation}
for all
$W'\in \mathbb{W}(\overline{\sigma}_F, \overline{\psi}_F^{-l})$ and
for all $l$-modular generic representations $\overline{\sigma}_F$ of
$G_{n-1}(F)$. Now, consider an $l$-modular generic representation
$\overline{\sigma}_F$ of $G_{n-1}(F)$ and take an $l$-adic lift of
$\overline{\sigma}_F$, say $\sigma_F$ (see subsection \ref{SL}). Note
that $J_l(\sigma_F)=\overline{\sigma}_F$. Let $\sigma_E$ be the
$l$-adic generic representation of $G_{n-1}(E)$ obtained as a base
change lift of $\sigma_F$. From the functional equation with its
modifications as in \eqref{int_2}, we have
\begin{align*}
\begin{split}
\sum_{r \in \mathbb{Z}} c^F_{-r}\big(J_l(\pi_F)^{(l)}(w_n)U,
&\overline{\sigma}_F^{(l)}(w_{n-1})W'\big)
q_F^{-\frac{r}{2}}X^{-lr} =\\
&\omega_{J_l(\sigma_F)}(-1)^{n-2}
\gamma\big(X^l,J_l(\pi_F)^{(l)}, \overline{\sigma}_F^{(l)},
\overline{\psi}_F^l\big)\sum_{r \in \mathbb{Z}} c^F_r(U,W')
q_F^{\frac{r}{2}}X^{lr},    
\end{split}
\end{align*}
where we replace the variable $X$ by $X^l$. Note that
${\rm res}_{P_n(F)}(W)$ is equal to ${\rm res}_{P_n(F)}(U)$. Thus,
comparing the above functional equation with (\ref{FE_final_1}), the
relation \eqref{TS} is now equivalent to the following equality:
$$ \gamma(X,J_l(\pi_E), J_l(\sigma_E),\overline{\psi}_E) =
\gamma\big(X^l, J_l(\pi_F)^{(l)},\overline{\sigma}_F^{(l)},
\overline{\psi}_F^l\big). $$
\subsubsection{}\label{main_proof_fourth__part}
Recall that 
$$ \gamma(X,\pi_E,\sigma_E,\psi_E) =
\epsilon(X,\pi_E,\sigma_E,\psi_E)
\frac{L(q_E^{-1}X^{-1},\widetilde{\pi_E},
\widetilde{\sigma_E})}{L(X,\pi_E,\sigma_E)}. $$
Now using the identity in \cite[Proposition 6.9]{MR1007299}, we have
$$ L(X,\pi_E,\sigma_E)=
\prod_{\eta}L(X,\pi_F,\sigma_F\otimes\eta) $$ and
$$ \epsilon(X,\pi_E,\sigma_E,\psi_E) =
\mathcal{C}_{E/F}(\psi_F)^{n(n-1)}
\prod_{\eta}\epsilon(X,\pi_F,\sigma_F\otimes\eta,\psi_F), $$ where
$\eta$ runs over all the characters of the group
$F^\times/{\rm Nr}_{E/F}(E^\times)$, which is isomorphic to
${\rm Gal}(E/F)$ via local class field theory. Here,
$\mathcal{C}_{E/F}(\psi_F)$ is the Langlands constant, defined as in
the proof of Lemma \ref{i}, and $\mathcal{C}_{E/F}(\psi_F)^2 = 1$.
Then the above relations implies that
\begin{equation}\label{base_change_gamma_identity_general}
\gamma(X,\pi_E,\sigma_E,\psi_E) =
\prod_{\eta}\gamma(X,\pi_F,\sigma_F\otimes\eta,\psi_F).
\end{equation} 
Now, using the identity (\ref{gamma_factor_generic_part}), 
we have
$$ r_l(\gamma(X,\pi_E,\sigma_E,\psi_E)) = 
\gamma(X,J_l(\pi_E), J_l(\sigma_E),\overline{\psi}_E) $$
and 
$$ r_l(\gamma(X,\pi_F, \sigma_F\otimes \eta,\psi_E))
= \gamma(X,J_l(\pi_F), \overline{\sigma}_F,\overline{\psi}_F), $$ for
each character $\eta$. Taking mod-$l$ reduction to the identity
(\ref{base_change_gamma_identity_general}) and using these relations,
we get
\begin{equation}\label{most_important_identity}
\gamma(X,J_l(\pi_E), J_l(\sigma_E),\overline{\psi}_E) =
\gamma(X,J_l(\pi_F),\overline{\sigma}_F,\overline{\psi}_F)^l.
\end{equation}
Finally, it follows from Lemma \ref{ar} that
$$ \gamma(X,J_l(\pi_E), J_l(\sigma_E),\overline{\psi}_E) =
\gamma(X^l,J_l(\pi_F)^{(l)},
\overline{\sigma}_F^{(l)},\overline{\psi}_F^l).$$
The identity (\ref{TS}) shows that space
$\mathcal{M}(\pi_F, \psi_F)$ is stable under the
action of $G_n(F)$ and the map
$$ \Phi_n: \mathcal{M}(\pi_F, \psi_F)\rightarrow
\mathbb{K}(J_l(\pi_F)^{(l)}, \overline{\psi}^l_F) $$ is
surjective. Using Proposition \ref{tate_is_generic}, the $G_n(F)$
representation
$\widehat{H}^0(\mathbb{W}(J_l(\pi_E), \overline{\psi}_E))$ has a
unique generic sub-quotient, which is necessarily equal to
$J(\pi_F)^{(l)}$. This completes the proof.
\end{proof}
Now we deduce some corollaries of Theorem \ref{noncuspidal_n_thm}. We
keep the same assumptions that $E/F$ is a finite Galois extension
$p$-adic fields with $[E:F]=l$, where $l$ and $p$ are distinct primes,
and $l$ does not divide $\lvert G_{n-1}(k_F)\rvert$.
\begin{corollary}\label{noncuspidal_n_Whittaker_lattice_thm}
  Let $\pi_E$ be an integral generic $\mathcal{K}$-representation of
  $G_n(E)$ which is absolutely irreducible. Assume that
  $\pi_E^\gamma \simeq \pi_E$, for all $\gamma\in \Gamma$. Let
  $\mathbb{W}_\Lambda(\pi_E,\psi_E)$ be the space of all
  $\Lambda$-valued functions in the Whittaker model of $\pi_E$.  Let
  $\pi_F$ be the integral generic
  $\overline{\mathbb{Q}}_l$-representation of $G_n(F)$ such that
  $\pi_E\otimes_{\mathcal{K}} \overline{\mathbb{Q}}_l$ is the base
  change lift of $\pi_F$. Then the Frobenius twist of $J_l(\pi_F)$
  occurs as a unique generic subquotient of the zeroth Tate cohomology
  group $\widehat{H}^0(\mathbb{W}_\Lambda(\pi_E,\psi_E))$.
\end{corollary}
\begin{proof}
  The outline of the proof is same as Theorem
  \ref{noncuspidal_n_thm}. For the sake of completeness, we discuss
  some crucial steps. As one can observe, the previous and the present
  theorems are similar in spirit to local converse theorem for
  $(n,n-1)$, we precisely use Theorem \ref{noncuspidal_n_thm} at the
  $(n-1)$ step.
		
First, note that the $\Lambda$-lattice
$\mathbb{W}_\Lambda(\pi_E,\psi_E)$ is stable under the action of
$G_n(E)\rtimes \Gamma$ (see \cite[Theorem 2]{MR2058628} and
Lemma \ref{inv_whit_model}). Consider the integral Kirillov model
$\mathbb{K}_\Lambda(\pi_E,\psi_E)$. The restriction map
$W\mapsto {\rm res}_{P_n(E)}(W)$ is then a bijection from
$\mathbb{W}_\Lambda(\pi_E,\psi_E)$ onto
$\mathbb{K}_\Lambda(\pi_E,\psi_E)$. Let $\Phi_n$ be the following
$P_n(F)$-equivariant map, defined as the composition of restriction to
$P_n(F)$ map and (pointwise) mod-$l$ reduction map
$$ \Phi_n : \mathbb{K}_\Lambda(\pi_E,\psi_E)^\Gamma
\longrightarrow \mathbb{K}(J_l(\pi_F)^{(l)},\overline{\psi}_F^l). $$
Then $\Phi_n$ is non-zero and it factorizes through the Tate
cohomology space $\widehat{H}^0(\mathbb{K}_\Lambda(\pi_E,\psi_E))$. As
before, we consider the non-zero space
$\Phi_n^{-1}(\mathbb{K}(J_l(\pi_F)^{(l)}, \overline{\psi}_F^l))$ and
denote it by $\mathcal{M}(\pi_F, \psi_F)$. To prove the above
corollary, it is sufficient prove that
$$ \Phi_n(\overline{\pi_E(w_n)}V) = J_l(\pi_F)^{(l)}(w_n)\Phi_n(V), $$
for all $V\in \mathcal{M}(\pi_F,\psi_F)$. It is enough to prove the
following identity of Laurent series
\begin{equation}\label{Laurent_series_identity}
\sum_{r\in\mathbb{Z}}c^F_{r}\big(\Phi_n(
\overline{\pi_E(w_n)}
V),W'\big)q_F^{r/2}X^{r} = \sum_{r\in\mathbb{Z}}
c^F_{r}\big(J_l(\pi_F)^{(l)}(w_n)\Phi_n(V),W'\big)
q_F^{r/2}X^{r},
\end{equation}
for all
$W'\in \mathbb{W}(\overline{\sigma}_F^{(l)},\overline{\psi}_F^{-l})$
and for all $l$-modular generic representations $\overline{\sigma}_F$
of $G_{n-1}(F)$. Take such mod-$l$ generic representation
$\overline{\sigma}_F$ of $G_{n-1}(F)$. Let $\sigma_F$ be an $l$-adic
lift of $\overline{\sigma}_F$ and let $\sigma_E$ be the base change
lift of $\sigma_F$ to $G_{n-1}(E)$. Theorem \ref{noncuspidal_n_thm}
gives a $G_{n-1}(F)$-stable subspace $\mathcal{N}(\sigma_F,\psi_F)$ of
the Tate cohomology group
$\widehat{H}^0(\mathbb{W}(J_l(\sigma_E),\overline{\psi}_E))$ with the
following $G_{n-1}(F)$-equivariant surjection
$$ \Phi_{n-1} : \mathcal{N}(\sigma_F,\psi_F)
\longrightarrow
\mathbb{W}(\overline{\sigma}_F^{(l)},\overline{\psi}_F^{-l}). $$ Now,
lifting the function $W'$ via $\Phi_{n-1}$ and the function $V$ to the
respective $\Gamma$-invariant Kirillov models and using the identities
(\ref{integral_lift}), the above relation
(\ref{Laurent_series_identity}) is then equivalent to the following
identity of gamma factors :
$$ r_l(\gamma(X,\pi_E,\sigma_E,\psi_E)) =
\gamma(X^l, J_l(\pi_F)^{(l)}, \overline{\sigma}_F^{(l)},
\overline{\psi}_F^l). $$ This follows from the arguments of the
subsection (\ref{main_proof_fourth__part}) in the proof of 
Theorem \ref{noncuspidal_n_thm}.
\end{proof}
\begin{corollary}\label{n_thm}
Let $n\geq 3$, and 
let $\pi_F$ be an integral $l$-adic
cuspidal representation of $G_n(F)$ and let $\pi_E$ be 
the base change lift of
$\pi_F$. We further assume that $l$ does not divide $n$. 
Then we have 
$$\widehat{H}^0(r_l(\pi_E))\simeq r_l(\pi_F)^{(l)}.$$
\end{corollary}
\begin{proof}
Since $l$ does not divide $n$, the representation $\pi_E$ is
cuspidal. As the Kirillov model
$\mathbb{K}(r_l(\pi_E), \overline{\psi}_E)$ is equal to
$\mathcal{K}(\overline{\psi}_E)$, we get that
$\widehat{H}^0(\mathbb{K}(r_l(\pi_E), \overline{\psi}_E))$ is equal
to $\mathcal{K}(\overline{\psi}_F^l)$. Thus, the action of $G_n(F)$
on $\widehat{H}^0(\mathbb{K}(r_l(\pi_E), \overline{\psi}_E))$ is
irreducible, and the corollary follows from Theorem
\ref{noncuspidal_n_thm}.
\end{proof}
	
\section{Base change for
\texorpdfstring{$Z(\Delta)$}{}}\label{Tate_Zelevinsky}
In this section, we study the Tate cohomology of the base change of
the Zelevinsky subrepresentations of the form $Z(\Delta)$. In
\cite{MR584084}, Zelevinsky uses the notation $\langle\Delta\rangle$
for $Z(\Delta)$. In this section, we continue with the assumptions in
Corollary \ref{n_thm}, i.e., $l\neq p$ and $l$ does not divide
$|G_{n-1}(\mathbb{F}_q)|$ and the integer $n$. Recall that $q$ is the
cardinality of the residue field of $F$. We will crucially use the
fact that $Z(\Delta)$ remains irreducible under the restriction to
$P_n$ and it is characterised by this property.
\subsection{}\label{intro}
Keeping the notations as in subsection (\ref{segment}), let
$\Delta=\big\{\sigma,\sigma\nu_K,\dots,\sigma\nu_K^{r-1}\big\}$ be a
segment, where $K$ is a $p$-adic field and $\sigma$ is a cuspidal
$l$-adic representation of $G_m(K)$. We denote by $\ell(\Delta)$ the
length of $\Delta$, i.e., the integer $r$. The parabolic induction
$$ \sigma\times\sigma\nu_K\times\cdots\sigma\nu_K^{r-1} $$
admits a unique irreducible subrepresentation, denoted by
$Z(\Delta)$. Moreover, $Z(\Delta)$ can be characterised as those
irreducible representation of $G_{rm}(K)$ that remain irreducible
after restricting to $P_{rm}(K)$, and the restriction is isomorphic to
$(\Phi^+)^{m-1}\circ\Psi^+(Z(\Delta^-))$, where
$\Delta^-=\Delta\setminus\big\{\sigma\nu_K^{r-1}\big\}$. We refer to
\cite[Section 3]{MR579172}, \cite[Section 1.2, Chapter III]{MR1395151}
for the definitions of the functors $\Phi^{\pm}$ and $\Psi^{\pm}$, and
for the definition of $Z(\Delta)$ and its restriction to $P_n(K)$, we
refer to \cite[Section 3]{MR584084}.
\subsection{}\label{hyp_1}
Let $F$ be a finite extension of $\mathbb{Q}_p$, and let $E$ be a
finite Galois extension of $F$ of prime degree $l$ with $l\ne p$.  Let
$\Gamma$ denote the Galois group ${\rm Gal}(E/F)$ with generator, say
$\gamma$. Let $\sigma_F$ and $\sigma_E$ be the integral cuspidal
$l$-adic representations of $G_m(F)$ and $G_m(E)$ respectively, such
that $\sigma_E$ is a base change lift of $\sigma_F$. Consider the
segments
$$ \Delta_F = \big\{\sigma_F,\sigma_F\nu_F,
\dots,\sigma_F\nu_F^{k-1}\big\} $$
$$ \Delta_E=\big\{\sigma_E,\sigma_E\nu_E,\dots,\sigma_E
\nu_E^{k-1}\big\}. $$ 
Then we have the irreducible $l$-adic
representations $Z(\Delta_F)$ and $Z(\Delta_E)$ of $G_n(F)$ and
$G_n(E)$ respectively, where $n=km$. If we let $\sigma_F'$
(resp. $\sigma_E'$) to be the representation 
$\sigma_F\nu_F^{k-1}$
(resp. $\sigma_E\nu_E^{k-1}$), then we have
$$ \Pi_F(Z(\Delta_F))=\Pi_F(\sigma_F')\oplus
\Pi_F(\sigma_F'\nu_F^{-1})\oplus\cdots\oplus\Pi_F(\sigma_F) $$ 
and
$$ \Pi_E(Z(\Delta_E))=\Pi_E(\sigma_E')\oplus
\Pi_E(\sigma_E'\nu_E^{-1})\oplus\cdots\oplus\Pi_F(\sigma_E), $$ 
where $\Pi_F$ and $\Pi_E$ are the local Langlands 
correspondences
defined as in subsection (\ref{LLC}). This shows that
$$ {\rm Res}_{\mathcal{W}_E}\big(\Pi_F(Z(\Delta_F))\big)
\simeq\Pi_E(Z(\Delta_E)). $$
Thus the representation $Z(\Delta_E)$ is the base change of
$Z(\Delta_F)$.
\subsection{}\label{integral_structure}
Let $\mathcal{L}_0$ be a $G_m(E)$-invariant lattice in $\sigma_E$, and
let $S_\gamma:\sigma_E\rightarrow\sigma_E^\gamma$ be an isomorphism
with $S_\gamma^l={\rm id}$ and
$S_\gamma(\mathcal{L}_0)=\mathcal{L}_0$. Recall that the
representation
$\pi_E=\sigma_E\times\sigma_E\nu_E\times\cdots\times
\sigma_E\nu_E^{k-1}$
admits a $G_n(E)$-invariant lattice, say $\mathcal{L}'$, which is
induced via $\mathcal{L}_0$. 
Then $\mathcal{L}=\mathcal{L}'\cap Z(\Delta_E)$ is a
$G_n(E)$-invariant lattice in $Z(\Delta_E)$. Now, the map $S_\gamma$
induces an isomorphism
$T_\gamma:Z(\Delta_E)\rightarrow Z(\Delta_E)^\gamma$ such that
$T_\gamma^l={\rm id}$ and $T_\gamma$ stabilizes
$\mathcal{L}$. Moreover, choosing a $G_m(E)$-invariant,
$S_\gamma$-stable lattice in $\sigma_E$ is equivalent to choosing a
$G_n(E)$-invariant, $T_\gamma$-stable lattice in $Z(\Delta_E)$.
\begin{remark}\normalfont\label{Tate_cusp_vanish}
Since $\sigma_E$ is cuspidal, the restriction $\sigma_E|_{P_m(E)}$
is isomorphic to the compact induction $\mathcal{K}(\psi_E)$ as
$\overline{\mathbb{Q}}_l$ representations. So, the restriction of
$\mathcal{L}_0$ to the subgroup $P_m(E)$ is isomorphic to the space
of $\overline{\mathbb{Z}}_l$ valued functions in
$\mathcal{K}(\psi_E)$. This implies that
$\widehat{H}^1(\mathcal{L}_0) = 0$. For details, see \cite[Theorem
6]{MR3551160}.
\end{remark}
From this, we now deduce the following result. 
\begin{proposition}\label{Tate_vanish}
Let $\mathcal{L}$ be a lattice in $Z(\Delta_E)$ that is stable under
the action of both $G_n(E)$ and $T_\gamma$. Then we have
$\widehat{H}^1(\mathcal{L})=0$.
\end{proposition}
\begin{proof}
  We prove the above claim using induction on $\ell(\Delta_E)$. If the
  length of $\Delta_E$ is $1$, then the proposition clearly follows
  from Remark \ref{Tate_cusp_vanish}. Recall that
$$ 
Z(\Delta_E)|_{P_n(E)}\simeq(\Phi^+)^{m-1}
\circ\Psi^+(Z(\Delta_E^-)).
$$
Here, $Z(\Delta_E)^-$ is identified with the $m$-th derivative 
of $Z(\Delta_E)$. The $m$-th derivative is the composition of
the following maps:
$$Z(\Delta_E)\xrightarrow{r_{N_{n-m, m}(E)}} Z(\Delta_E^-)\otimes \sigma_E
\xrightarrow{\id\otimes r_{N_m(E), \Theta_E}} Z(\Delta_E^-),$$ where
$r_{N_{n-m, m}(E)}$ and $\id \otimes r_{N_m(E), \Theta_E}$ are the
natural quotient maps of corresponding Jacquet module and twisted
Jacquet module. Note that the maps $r_{N_{n-m, m}(E)}$ and
$r_{N_m(E), \Theta_E}$ preserve integral structures (see
\cite[Proposition 1.4(i)]{MR2120114} for $r_{N_{n-m, m}(E)}$, and
\cite[Theorem III.2]{MR2058628} for $r_{N_m(E), \Theta_E}$). Thus, we
get that $\mathcal{L}^{-}$ defined as
$$\mathcal{L}^{-}=\Psi^{-}\circ (\Phi^-)^{m-1}(\mathcal{L})$$
is a lattice in $Z(\Delta_E^-)$. 
We have the following isomorphism of $P_n(E)\rtimes\Gamma$-modules
$$ 
\mathcal{L}|_{P_n(E)}\simeq(\Phi^+)^{m-1}
\circ\Psi^+(\mathcal{L}^-).
$$
Applying
Proposition \ref{tr} repeatedly $(m-1)$-times, we get that
\begin{equation}\label{induction_Z(D)_bla}
\widehat{H}^1(\mathcal{L})\simeq(\Phi^+)^{m-1}\circ\Psi^+
\big(\widehat{H}^1(\mathcal{L}^-)\big).
\end{equation}
When $k$ is $2$, we have $\Delta_E=\big\{\sigma_E,\sigma_E\nu_E\big\}$
and in this case, the representation $Z(\Delta_E^-)$ is equal to
$\sigma_E$ and $\mathcal{L}^-$ is a $G_m(E)\rtimes \Gamma$-stable
lattice in $\sigma_E$. Then, it follows from Remark
\ref{Tate_cusp_vanish} and the isomorphism (\ref{induction_Z(D)_bla})
that $\widehat{H}^1(\mathcal{L})=0$. Suppose the result is true for
all $Z(\Delta)$'s where the length of $\Delta$ is strictly less than
$k$. Recall that the length of $\Delta_E^-$ is $k-1$. By induction
hypothesis, we have $\widehat{H}^1(\mathcal{L}^-)=0$. Then, using
(\ref{induction_Z(D)_bla}), we get that
$\widehat{H}^1(\mathcal{L})=0$.
\end{proof}
\subsection{}
We now recall the mod-$l$ reduction of the representation
$Z(\Delta_F)$. Let us introduce the following notations:
$$ r_l(\Delta_F)=\big\{r_l(\sigma_F),r_l(\sigma_F)
\overline{\nu}_F,\dots,r_l(\sigma_F)\overline{\nu}_F^{k-1}\big\} $$
and
$$ r_l(\Delta_F)^{(l)} =\big\{r_l(\sigma_F)^{(l)},(r_l(\sigma_F)
\overline{\nu}_F)^{(l)},\dots,(r_l(\sigma_F)
\overline{\nu}_F^{k-1})^{(l)}\big\}, $$ 
where $r_l(\sigma_F)$ is the mod-$l$ reduction of $\sigma_F$ 
and $r_l(\sigma_F)^{(l)}$ is the
Frobenius twist of $r_l(\sigma_F)$. Then the mod-$l$ reduction of
$Z(\Delta_F)$ (\cite[Theorem 9.39]{MR3178433}) is given by
$$ r_l(Z(\Delta_F))=Z(r_l(\Delta_F)). $$
This shows, in particular, that $r_l(Z(\Delta_F))$ is
irreducible. Moreover, the Frobenius twist of $r_l(Z(\Delta_F))$
equals $Z(r_l(\Delta_F)^{(l)})$.  We conclude this section with the
following theorem.
\begin{theorem}\label{Tate_Z(D)}
Let $E/F$ be a finite Galois extension with $[E:F]=l$, where $l$ and
$p$ are distinct primes such that $l$ does not divide $n$ and
$|G_{n-1}(\mathbb{F}_q)|$. Let $\sigma_F$ be an integral cuspidal
$l$-adic representation of $G_m(F)$, and let $\sigma_E$ be an
integral $l$-adic representation of $G_m(E)$ obtained as a base
change of $\sigma_F$ (Note that $\sigma_E$ is also cuspidal). Let
$\Delta_F=\big\{\sigma_F,\sigma_F\nu_F,\dots,
\sigma_F\nu_F^{k-1}\big\}$
and
$\Delta_E=\big\{\sigma_E,\sigma_E\nu_E,\dots,
\sigma_E\nu_E^{k-1}\big\}$
be two segments (Here $n=km$). Then we have
$$ \widehat{H}^0(r_l(Z(\Delta_E)))\simeq r_l(Z(\Delta_F))^{(l)}. $$
\end{theorem}
\begin{proof}
We use induction on $\ell(\Delta_E)$. For $k=1$, we have
$Z(\Delta_E)=\sigma_E$ and $Z(\Delta_F)=\sigma_F$, and the theorem
follows from Corollary \ref{n_thm}. Suppose the result is true for
all segments $\Delta_F'$ and $\Delta_E'$ with
$\ell(\Delta_F')=\ell(\Delta_E') < k$. Let $\tau_F$ and $\tau_E$ be
the mod-$l$ Zelevinsky representations $Z(r_l(\Delta_F))$ and
$Z(r_l(\Delta_E))$, respectively. We denote by $\tau_F^-$ and
$\tau_E^-$ the mod-$l$ representations $Z(r_l(\Delta_F^-))$ and
$Z(r_l(\Delta_E^-))$ respectively. Since the restriction
$\tau_E|_{P_n(E)}$ is isomorphic to
$(\Phi^+)^{m-1}\circ\Psi^+(\tau_E^-)$, it follows from Proposition \ref{tr} that
\begin{equation}\label{isom_Zel}
\widehat{H}^0(\tau_E|_{P_n(E)}) \simeq
(\Phi^+)^{m-1}\circ\Psi^+(\widehat{H}^0(\tau_E^-)).
\end{equation}
By induction hypothesis, we have
$$ \widehat{H}^0(\tau_E^-) \simeq  (\tau_F^-)^{(l)}. $$ 
Thus it follows from (\ref{isom_Zel}) and \cite[Chapter 3,
1.5]{MR1395151} that $\widehat{H}^0(\tau_E)$ is an irreducible
representation of $P_n(F)$ and hence irreducible as a representation
of $G_n(F)$. Let $\lambda = (m,m,\dots,m)$ be the partition of $n$ and
let $P_\lambda=M_\lambda N_\lambda$ be the parabolic subgroup of
$G_n$. The isomorphism \label{isom_Z_2} implies that
$\widehat{H}^0(\tau_E)_{N_\lambda(F)}$ is non-zero. Then, using Lemma
\ref{Jaquet_Tate} and Corollary \ref{n_thm}, we get the following
isomorphism of $M_\lambda(F)$-representations
\begin{equation}\label{Jacquet_Tate_Z(D)}
(\widehat{H}^0(\tau_E))_{N_\lambda(F)}
\simeq ((\tau_F^-)^{(l)})_{N_\lambda(F)}.
\end{equation}
The irreducibility of $\widehat{H}^0(\tau_E)$ and the 
isomorphism (\ref{Jacquet_Tate_Z(D)}) implies that
$$ \widehat{H}^0(\tau_E) \simeq \tau_F^{(l)} $$
as a representation of $G_n(F)$ (see \cite[Proposition
V.9.1]{Vigneras_Induced}).
\end{proof}
	
\section{Irreducibility of Tate Cohomology of generic
representations}\label{Tate_generic}
In this section, we discuss the Tate cohomology groups of
representations of the form $L(\Delta)$, where
$L(\Delta)$ is defined in subsection (\ref{segment}). We
assume that $l$ does not divide the pro-order of
$G_n(F)$. We continue with the notation
that $\sigma_F$ is an $l$-adic cuspidal representation
of $G_n(F)$ and $\sigma_E$ is the base change lift of
$\pi_F$ to $G_n(E)$.
\subsection{}\label{integral_structure_2}
Keep the notations as in subsection (\ref{hyp_1}). Recall that
$L(\Delta_E)$ is the unique generic quotient of the parabolically
induced representation
$\sigma_E\times\sigma_E\nu_E\times\cdots\times\sigma_E\nu_E^{k-1}$.
Now fix a $G_m(E)$-invariant lattice $\mathcal{L}_0$ in
$\sigma_E$. Then we have the $G_n(E)$-invariant lattice
$\mathcal{L}_0\times\cdots\times\mathcal{L}_0$ in
$\sigma_E\times\cdots\times\sigma_E\nu_E^{k-1}$, and the image of
$\mathcal{L}_0\times\cdots\times\mathcal{L}_0$ under the surjection
$$ \sigma_E\times\sigma_E\nu_E\times\cdots\times
\sigma_E\nu_E^{k-1}\longrightarrow
L(\Delta_E), $$ 
say $\mathcal{L}$, is again a $G_n(E)$-invariant
lattice in $L(\Delta_E)$. As
in subsection (\ref{integral_structure}), an isomorphism between
$\sigma_E$ and $\sigma_E^\gamma$ induces an isomorphism
$T_\gamma:L(\Delta_E)\rightarrow L(\Delta_E)^\gamma$ with
$T_\gamma^l={\rm id}$ and $T_\gamma(\mathcal{L})=\mathcal{L}$.  Here,
the group $\Gamma$ acts on the lattice $\mathcal{L}$ by $T_\gamma$.
\begin{proposition}\label{Tate_1_L(D)_vanish}
Let $\mathcal{L}$ be a lattice in $L(\Delta_E)$ that is stable under
the action of $G_n(E)$ and $T_\gamma$. Then
$\widehat{H}^1(\mathcal{L})=0$.
\end{proposition}
\begin{proof}
  We proceed by induction on $\ell(\Delta_E)$, which equals $k$. When
  $\ell(\Delta_E)=1$, then $L(\Delta_E)=\sigma_E$. In this case, the
  proposition follows from \cite[Theorem 6]{MR3551160}. Suppose the
  result is true for all representations $L(\Delta)$, where
  $\ell(\Delta)$ is strictly less than $k$. Let $\tau$ be the
  restriction ${\rm res}_{P_n(E)}(L(\Delta_E))$. Consider the
  filtration of $P_n(E)$-representations:
$$
(0)\subseteq\tau_n\subseteq\cdots
\subseteq\tau_2\subseteq\tau_1 = \tau,
$$
where $\tau_i/\tau_{i+1}=(\Phi^+)^{i-1}\circ\Psi^+(\tau^{(i)})$ and
$\tau^{(i)}=\Psi^-\circ (\Phi^-)^{i-1}(\tau)$. The
map $T_\gamma$ induces an isomorphism between $\tau^{(i)}$ and
$(\tau^{(i)})^\gamma$, and also between the representations $\tau_i$
and $\tau_i^\gamma$. Hence, there is an action of $\Gamma$ on both
$\tau^{(i)}$ and $\tau_i$. From
\cite[Proposition 9.6]{MR584084}, we get that
\begin{center}
$\tau^{(j)}=0$, if $j$ is not divisible by $m$, and
\end{center}
\begin{center}
$\tau^{(rm)} = L\big(\big\{\sigma_E\nu_E^r,\dots,\sigma_E\nu_E^{k-1}
\big\}\big)$, for $r=0,1,\dots,k-1$.
\end{center}
For each $r\in\{1,2\dots,k-1\}$, let $\Delta_E'$ and $\Delta_E''$ be
the segments $\{\sigma_E\nu_E^r,\dots,\sigma_E\nu_E^{k-1}\}$ and
$\{\sigma_E,\dots,\sigma_E\nu_E^{r-1}\}$ respectively. The $rm$-th
derivative of $L(\Delta_E)$ is the composition of the following maps:
$$ \tau \xrightarrow{r_{N_{n-rm, rm}(E)}} L(\Delta_E')\otimes L(\Delta_E'')
\xrightarrow{\id\otimes r_{N_{rm}(E), \Theta_E}} \tau^{(rm)},$$ where
$r_{N_{n-rm, rm}(E)}$ and $\id \otimes r_{N_{rm}(E), \Theta_E}$ are
the quotient maps of the corresponding Jacquet module and twisted
Jacquet module. Note that the maps $r_{N_{n-rm, rm}(E)}$ and
$r_{N_{rm}(E), \Theta_E}$ preserve integral structures (see
\cite[Proposition 1.4(i)]{MR2120114} and \cite[Theorem
III.2]{MR2058628}). Thus, we get that $\mathcal{L}^{(rm)}$, defined as
$$ \mathcal{L}^{(rm)} = \Psi^-\circ (\Phi^-)^{rm-1}(\mathcal{L}), $$
is a $G_{n-rm}(E)\rtimes \Gamma$-invariant lattice in
$L(\Delta_E)^{(rm)}$.
Let $\mathcal{L}_i
\subset\tau_i$ be the
$P_n(E)\rtimes \Gamma$-invariant $\overline{\mathbb{Z}}_l$-lattice 
$$\mathcal{L}_i=(\Phi^+)^{i-1}\circ(\Phi^-)^{i-1}(\mathcal{L}),$$
for all $1\leq i\leq n$. For $1\leq r\leq k-1$, we have the short
exact sequence of $P_n(E)\rtimes\Gamma$-modules
\begin{equation}\label{derivates}
0\longrightarrow\mathcal{L}_{(r+1)m}\longrightarrow\mathcal{L}_{rm}
\longrightarrow(\Phi^+)^{rm-1}\circ\Psi^+(\mathcal{L}^{(rm)})
\longrightarrow 0
\end{equation}
By induction
hypothesis, we have $\widehat{H}^1(\mathcal{L}^{(rm)})=0$. Then the
long exact sequence of Tate cohomology corresponding to (\ref{derivates}) gives
$$\cdots\longrightarrow\widehat{H}^1(\mathcal{L}_{(r+1)m})
\longrightarrow\widehat{H}^1(\mathcal{L}_{rm})\longrightarrow 0
\longrightarrow\widehat{H}^0(\mathcal{L}_{(r+1)m})
\longrightarrow\cdots$$
For $r=k-1$, the representation
$\tau_n$, is equal to
${\rm ind}_{N_n(E)}^{P_n(E)}\Theta_E$. In this
case, $\widehat{H}^1(\mathcal{L}_n)=0$ by Proposition 
\ref{tr}. Then, from the above long exact sequence, we get 
that $\widehat{H}^1(\mathcal{L}_{(k-1)m})=0$. Again using the above 
long exact sequence for $r=k-2$, we get that
$\widehat{H}^1(\mathcal{L}_{(k-2)m})=0$. Thus, an 
inductive process gives
$$
\widehat{H}^1(\mathcal{L}) = 
\widehat{H}^1(\mathcal{L}_m) = 0.
$$
\end{proof}
\subsection{}
Let $\pi_E$ be a generic, integral $l$-adic representation of
$G_n(E)$. Then $\pi_E$ is of the form
$$
\mathcal{L}(\Delta_1)\times\mathcal{L}(\Delta_2)
\times\cdots\times\mathcal{L}(\Delta_t),
$$ 
where for each
$j\in\{1,2,\dots,t\}$, the representation $\mathcal{L}(\Delta_j)$ is
integral. Let $\mathcal{L}_j$ be a lattice in $L(\Delta_j)$, defined
as in subsection (\ref{integral_structure_2}). Let $T_{\gamma,j}$ be
the isomorphism between $L(\Delta_j)$ and $L(\Delta_j)^\gamma$ such
that $T_{\gamma,j}(\mathcal{L}_j)=\mathcal{L}_j$. Now consider the
$\overline{\mathbb{Z}}_l$-module
$\mathcal{L}=\mathcal{L}_1\times\cdots\times\mathcal{L}_t$. Then
$\mathcal{L}$ is a lattice in $\pi_E$ that is stable under the action
of $G_n(E)$. Moreover, we have an isomorphism
$T_\gamma:\pi_E\rightarrow\pi_E^\gamma$, induced by
$\big\{T_{\gamma,j}\big\}_{j=1}^t$, such that
$T_\gamma(\mathcal{L})=\mathcal{L}$. Note that $l$ is 
banal for $G_n(E)$. Since the mod-$l$
reduction of $\pi_E$ is irreducible, any lattice 
in $\pi_E$ is homothetic to $\mathcal{L}$.
\begin{corollary}\label{Tate_1_generic}
Assume that $l$ does not divide
$|G_{n}(\mathbb{F}_q)|$. Let $\pi_E$ be a generic, integral $l$-adic
representation of $G_n(E)$ as above. Let $\mathcal{L}$ be a lattice
in $\pi_E$ that is stable under the action of $G_n(E)$ and
$T_\gamma$. Then $\widehat{H}^1(\mathcal{L})=0$.
\end{corollary}
\begin{proof}
Using Proposition \ref{tr}, we have
$$ \widehat{H}^1(\mathcal{L})=\widehat{H}^1(\mathcal{L}_1)
\times\cdots\times\widehat{H}^1(\mathcal{L}_t). $$
Now applying Proposition \ref{Tate_1_L(D)_vanish}, we get that
$\widehat{H}^1(\mathcal{L}_i)=0$, for each $i$. Hence, the theorem.
\end{proof}
\begin{theorem}\label{Tate_L(D)}
Let $E/F$ be a finite Galois extension with $[E:F]=l$, where $l$ and
$p$ are distinct primes such that $l$ does not divide
$|G_n(\mathbb{F}_q)|$. Let $\sigma_F$ be an integral cuspidal
$l$-adic representation of $G_m(F)$, and let $\sigma_E$ be an
integral cuspidal $l$-adic representation of $G_m(E)$ obtained as a
base change of $\sigma_F$. Let
$\Delta_F=\big\{\sigma_F,\sigma_F\nu_F,\dots,
\sigma_F\nu_F^{k-1}\big\}$
and
$\Delta_E=\big\{\sigma_E,\sigma_E\nu_E,\dots,
\sigma_E\nu_E^{k-1}\big\}$
be two segments (Here $n=km$). Then
$$ \widehat{H}^0(r_l(L(\Delta_E)))\simeq r_l(L(\Delta_F))^{(l)}. $$
\end{theorem}
\begin{proof}
We prove the theorem using induction on $\ell(\Delta_F)$. Since $l$
does not divide $|G_n(\mathbb{F}_q)|$, the mod-$l$ reduction of the
irreducible integral representations $L(\Delta_F)$ and $L(\Delta_E)$
are also irreducible, and we have
$$ r_l(L(\Delta_F)) = L(r_l(\Delta_F)) $$
and 
$$ r_l(L(\Delta_E)) = L(r_l(\Delta_E)) $$
where $r_l(\Delta)$'s are defined as in subsection
(\ref{integral_structure}). Using the long exact sequence in Tate
cohomology for the exact sequence \eqref{derivates} we get a
filtration
$$ \res_{P_n(F)}\widehat{H}^0(r_l(L(\Delta_E)))= \eta_1\supseteq
\eta_2\supseteq \cdots\supseteq \eta_n, $$ such that
$\eta_i/\eta_{i+1}\neq 0$ if and only if $i$ is a multiple of $m$. By
induction hypothesis, we get that $\eta_{ms}/\eta_{m(s+1)}$ is an
irreducible representation of $P_n(F)$. Theorem
\ref{noncuspidal_n_thm} says that the Frobenius twist
$r_l(L(\Delta_F))^{(l)}$ is the unique generic sub-quotient of
$\widehat{H}^0(r_l(L(\Delta_E)))$. Since the lengths of $P_n(F)$
representations $\widehat{H}^0(r_l(L(\Delta_E)))$ and
$r_l(L(\Delta_F))$ are the same, we get that $r_l(L(\Delta_F))^{(l)}$
is isomorphic to $\widehat{H}^0(r_l(L(\Delta_E)))$.
\end{proof}
Let us continue with the hypothesis as in Theorem \ref{Tate_L(D)}. Let
$\pi$ be an integral $l$-adic generic smooth representation of
$G_n(E)$. Since $l$ does not divide $|G_n(\mathbb{F}_q)|$, the
mod-$l$-reduction $r_l(\pi)$ is irreducible, and hence generic. Then
we have
\begin{corollary}\label{Tate_0_generic}
Let $E/F$ be a finite Galois extension with $[E:F]=l$, where $l$ and
$p$ are distinct primes such that $l$ does not divide
$|G_n(\mathbb{F}_q)|$. Let $\pi_F$ be an integral $l$-adic generic
representation of $G_n(E)$, and let $\pi_E$ be a base change lift of
$\pi_F$ (Note that $\pi_E\simeq\pi_E^\gamma$).  Then
$$ \widehat{H}^0(r_l(\pi_E)) \simeq r_l(\pi_F)^{(l)}. $$ 
\end{corollary}
\begin{proof}
This follows from Proposition \ref{tr} and Theorem \ref{Tate_L(D)}. 
\end{proof}

	\bibliographystyle{amsalpha}
	\bibliography{rev_3_imrn.bib}
	\noindent
	Santosh Nadimpalli, \\
	\texttt{nvrnsantosh@gmail.com}, \texttt{nsantosh@iitk.ac.in}.\\
	Sabyasachi Dhar,\\
	\texttt{sabya@iitk.ac.in}\\
	Department of Mathematics and Statistics, Indian
	Institute of Technology Kanpur, U.P. 208016, India.
\end{document}